\title{Calderón-Zygmund theory on some Lie groups of exponential growth}
\author[F. \ De Mari]{Filippo De Mari}
\address[Filippo De Mari]{Dip. di Matematica, Dipartimento di Eccellenza 2023-2027, and MaLGa center, Università di Genova, Via Dodecaneso 35, 16146 Genova, Italy}
\email{\href{mailto: demari@dima.unige.it}{\texttt{demari@dima.unige.it}}}
\author[M.\ Levi]{Matteo Levi}
\address[Matteo Levi]{Fac. de Ciències, Universitat Autònoma de Barcelona, 08193 Bellaterra, Spain, and Dept.\ Matem\`atica i Inform\`atica,
	Universitat  de Barcelona,
	Gran Via 585, 08007 Bar\-ce\-lo\-na, Spain }
\email{\href{mailto: matteo.levi@ub.edu}{\texttt{matteo.levi@ub.edu}}}
\author[M. \ Monti]{Matteo Monti} \address[Matteo Monti]{Dip. di Scienze Matematiche ``Giuseppe Luigi Lagrange''  \\
	Politecnico di Torino\\ corso Duca degli Abruzzi 24\\ 10129 Torino\\ Italy}
\email{\href{mailto: matteo.monti@polito.it}{\texttt{matteo.monti@polito.it}}}
\author[M. Vallarino]{Maria Vallarino}
\address[Maria Vallarino]{Dip. di Scienze Matematiche ``Giuseppe Luigi Lagrange''  \\
	Politecnico di Torino\\ corso Duca degli Abruzzi 24\\ 10129 Torino\\ Italy}
\email{\href{mailto: maria.vallarino@polito.it}{\texttt{maria.vallarino@polito.it}}}
\keywords{Calder\'on--Zygmund theory; nondoubling spaces; exponential growth groups; Hardy-Littlewood maximal function}
\thanks{
	The authors are members of the Gruppo Nazionale per l'Analisi Matematica, la Probabilit\`a e le loro Applicazioni (GNAMPA) of the
	Istituto Nazionale di Alta Matematica (INdAM). This work is partially supported by the project "Harmonic analysis on continuous and discrete structures" funded by Compagnia di San Paolo (Cup E13C21000270007). M. Levi has been partially supported by the Spanish Ministerio de Ciencia e Innovaci\'on
	(projects PID2021-123405NB-I00 and PID2021-123151NB-I00), by the Departament de Recerca i Universitats, project 021 SGR 00087, by the 
	Generalitat de Catalunya (grant
	2021 SGR 00071) and by the INdAM GNAMPA Project CUP E53C22001930001.
}
\newcolumntype{P}[1]{>{\centering\arraybackslash}p{#1}}
\newcolumntype{M}[1]{>{\centering\arraybackslash}m{#1}}
\newcommand{\R}{\mathbb{R}}
\newcommand{\HH}{\mathbb{H}}
\newcommand{\ga}{\mathfrak{a}}
\newcommand{\Sptwor}{\mathrm{Sp}(2,\R)}
\newcommand{\Span}{\mathrm{span}}
\newcommand{\de}{{\rm d}}
\newcommand{\arccosh}{{\rm arccosh}}
\DeclareMathOperator{\diam}{diam}
\theoremstyle{plain}
\newtheorem{thm}{Theorem}[section]
\newtheorem{lem}[thm]{Lemma}
\newtheorem{prop}[thm]{Proposition}
\theoremstyle{definition}
\newtheorem{defn}[thm]{Definition} 
\newtheorem{oss}[thm]{Remark}
\theoremstyle{remark}
\newcommand{\RR}{\mathbb{R}}
\newcommand{\zz}{\mathbb{Z}}
\newcommand{\CZ}{Calder\'on--Zygmund }
\newcommand{\CaC}{Carnot--Cara\-th\'eodory }
\newcommand{\ndi}{\mathrm{d}}  % differenziale 
\newcommand{\di}{\,\ndi}    % differenziale da integrale
\newcommand{\step}{S} 
\newcommand{\vfG}{X}
\newcommand{\vfA}{\breve{X}_0}
\newcommand{\vfN}{\breve{X}}
\newcommand{\diag}{{\rm diag}}
\newcommand{\lie}{\mathfrak}
\newcommand{\dimhom}{M}
\newcommand{\dil}[2]{{#1}^{(#2)}}
\begin{document}

	\maketitle

	\begin{abstract}
		Let $G = N \rtimes A$, where $N$ is a stratified Lie group and $A= \mathbb R_+$ acts on $N$ via automorphic dilations. We prove that the group $G$ has the \CZ property, in the sense of Hebisch and Steger, with respect to a family of flow measures and metrics. This generalizes in various directions previous works by Hebisch and Steger and Martini, Ottazzi and Vallarino, and provides a new approach in the development of \CZ theory in Lie groups of exponential growth. We also prove a weak type $(1,1)$ estimate for the Hardy--Littlewood maximal operator naturally arising in this setting. 
	\end{abstract}

	\section{Introduction}
	
	In the past century, the classical \CZ theory has been developed in the Euclidean setting and, more generally, on spaces of homogeneous type, see among others~\cite{CW,gra,S}. In the following years, many efforts have been made to generalize such theory in various nondoubling settings, both of polynomial and exponential growth (see for example~\cite{CMM,Conde,CondeParcet,dlHH,HS,MOV,arthur,NTV,T,To,Ve,Va}). 
	
	In this paper, we are specifically interested in the approach of one of the contributions listed above. Namely, the seminal paper \cite{HS} of some 20 years ago by Hebisch and Steger, in which they introduced an abstract \CZ theory based on the following definition.

	\begin{defn}\label{def CZP}
		A metric measure space $(X,d,\mu)$, with $\mu(X)=\infty$, has the \CZ property (CZP) if there exists $C_0\geq 1 $ such that, for every $f\in L^1(\mu)$ and $\alpha>0$, there exist a countable family of sets $\mathcal{E}(f,\alpha)=\{E_j\}$, positive numbers $r_j$, and points $x_j\in X$ for which $f=g+\sum_j b_j$, in such a way that, for every $j\in \mathbb{N}$,
		\begin{itemize}
			\item[(a)] $|g| \le C_0 \alpha$ $\mu$-almost everywhere;
			\item[(b)]  $b_j=0$ on $X\setminus E_j$;
			\item[(c)]  $\displaystyle\sum_j\|b_j\|_1 \le C_0\|f\|_1$ and $\displaystyle\int_{E_j} \ b_j \ d\mu = 0$;  
			%\item[(d)]  $\displaystyle \sum_j \mu(Q_j) \le \frac{C_0}{\alpha}\|f\|_1$.
			\item[(d)] $E_j\subset B(x_j,C_0r_j)$;
			\item[(e)] $\displaystyle\sum_j \mu(E_j^*) \le \frac{C_0}{\alpha}\|f\|_1$, where $E_j^*=\{x: \ d(x,E_j)<r_j\}$.
		\end{itemize}
		In such case, we let $\mathcal{E}=\{E\in \mathcal{E}(f,\alpha): f\in L^1(\mu), \ \alpha>0\}$, and we say that $(X,d,\mu)$ has the CZP with respect to the family $\mathcal E$, and that $\mathcal{E}$ is a CZ family for $(X,d,\mu)$.
	\end{defn}
	
	Observe that properties (a), (b) and (c) in Definition \ref{def CZP} only concern $(X,\mu)$ as a measure space, and do not depend in any way from the choice of a metric $d$ on $X$. We will say that $L^1(\mu)$ admits a CZ decomposition with respect to the family $\mathcal E$ if (a), (b) and (c) hold true.
	
	In \cite{HS} the authors provided evidence that spaces enjoying the CZP constitute a fertile environment to develop harmonic analysis (in particular singular integrals theory) which goes beyond the comfort zone of the spaces of homogeneous type. Indeed, while all spaces of homogeneous type have the CZP, the class of spaces with the CZP is strictly larger, and it even includes some natural and well studied spaces of exponential growth. In the discrete setting, a first example of such a class is provided by homogeneous trees with the natural distance and the canonical flow measure~\cite{HS}. It was later shown in \cite{LSTV} that the CZP actually extends to any tree (non necessarily homogeneous) with any locally doubling flow measure (not necessarily the canonical one). Also in the continuous setting, on which we focus in this paper, there exist nontrivial examples of spaces of exponential growth enjoying the CZP. Consider the group $G = N \rtimes A$, where $N$ is a stratified Lie group and $A= \mathbb R_+$ acts on $N$ via automorphic dilations. Let $d_G$ be a suitably chosen \CaC metric on $G$ and $\rho$ a right Haar measure on $G$. Then, $(G,d_G,\rho)$ has the CZP. This result was first proved in \cite[Lemma 5.1]{HS} for the case $N=\mathbb{R}^n$ (i.e, when $G$ is a so called $ax+b$ group), and then extended to the general case of arbitrary stratified Lie group $N$ in \cite[Theorem 3.20]{MOV}.
	
	The aim of this paper is to enrich further the fauna of noncompact Lie groups of exponential growth treatable in the context of the abstract \CZ theory described above.
	Our setting is the following. We consider the same groups $G$ as in \cite{MOV} and a class $\mathcal Z$ of left-invariant vector fields having nonvanishing  vertical component.
	Given a vector field $Z\in \mathcal Z$, we say that a measure $\mu$ is a $Z$-flow if it is absolutely continuous with respect to $\rho$ and its Radon--Nykodim derivative $\varphi$ is right--invariant with respect to the multiplication by $\exp(tZ)$, $t\in\mathbb R$. We
	introduce the class $\mathcal{F}_Z$ of measures $\mu$ on $G$ that are $Z$-flows and such that $(N,d_N,\mu_N)$ is doubling, where $\de\mu_N(n):=\varphi(n,1)\de n$ (see Section~\ref{sec: preliminaries}) and $d_N$ is a \CaC metric on $N$.
	We then construct an associated family $\mathcal{D}^Z$ of subsets of $G$, and we define a flow metric $d_Z$ (see Sections \ref{sec: admissible sets and dyadic partitions}, and \ref{sec: CZP vertical flow}, respectively, for their precise definitions). It is important to point out that the space $(G,d_Z,\mu)$ has exponential growth, and hence it is nondoubling, at least when $\varphi$ is bounded away from zero on $G$. Our main result is the following.
	\begin{thm}\label{main}
		For every vector field $Z\in\mathcal Z$ and any measure $\mu\in \mathcal{F}_Z$ the metric measure space $(G,d_Z,\mu)$ has the CZP with respect to the family $\mathcal{D}^Z$.
	\end{thm}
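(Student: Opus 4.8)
The plan is to run the classical \CZ stopping--time construction on the family $\mathcal D^Z$ built in Section~\ref{sec: admissible sets and dyadic partitions}, and then to verify the five conditions of Definition~\ref{def CZP} one at a time; the only genuinely nontrivial point will be the measure comparison between an admissible set and its $d_Z$--enlargement.

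First I would fix $f\in L^1(\mu)$ and $\alpha>0$ and introduce the dyadic maximal function $M^Z f(x)=\sup\bigl\{\mu(R)^{-1}\textstyle\int_R|f|\,\de\mu:\ x\in R\in\mathcal D^Z\bigr\}$. Since $\mu(G)=\infty$ and the sets of $\mathcal D^Z$ exhaust $G$ through coarser and coarser generations, the averages of $|f|$ over the members of $\mathcal D^Z$ containing a fixed point tend to $0$, so $M^Z f<\infty$ everywhere and $\Omega_\alpha:=\{M^Z f>\alpha\}$ is the disjoint union of the family $\{E_j\}\subset\mathcal D^Z$ of those sets that are maximal with respect to inclusion among the ones on which the average of $|f|$ exceeds $\alpha$. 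Put $c_j:=\mu(E_j)^{-1}\int_{E_j}f\,\de\mu$, $b_j:=(f-c_j)\chi_{E_j}$ and $g:=f\chi_{G\setminus\Omega_\alpha}+\sum_j c_j\chi_{E_j}$, so that $f=g+\sum_j b_j$ and $\mathcal E(f,\alpha)=\{E_j\}$. Condition (b) and the vanishing--integral part of (c) hold by construction; since $\|b_j\|_1\le 2\int_{E_j}|f|\,\de\mu$ and the $E_j$ are pairwise disjoint, $\sum_j\|b_j\|_1\le 2\|f\|_1$, which is the remaining part of (c). For (a) one uses, on $G\setminus\Omega_\alpha$, the Lebesgue differentiation theorem along $\mathcal D^Z$ to get $|f|\le M^Z f\le\alpha$ $\mu$--a.e., and on each $E_j$ the bound $|c_j|\le\mu(E_j)^{-1}\int_{\widehat E_j}|f|\,\de\mu\le\bigl(\mu(\widehat E_j)/\mu(E_j)\bigr)\alpha\le C\alpha$, where $\widehat E_j\in\mathcal D^Z$ is the dyadic parent of $E_j$, whose average of $|f|$ does not exceed $\alpha$ by maximality. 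Both the differentiation theorem and the last inequality rely on consecutive generations of $\mathcal D^Z$ having comparable measures, a structural property of the family to be established in Section~\ref{sec: admissible sets and dyadic partitions}; granting it, (a) and (c) follow with $C_0$ larger than the parent/child measure ratio of $\mathcal D^Z$.

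It then remains to choose the radii $r_j$ and the centres $x_j$ and to check (d) and (e). I would let $r_j$ and $x_j$ be the scale and the centre that the constructions of Sections~\ref{sec: admissible sets and dyadic partitions} and~\ref{sec: CZP vertical flow} attach to $E_j$; then (d), i.e.\ $E_j\subset B(x_j,C_0r_j)$ in the metric $d_Z$, is precisely the assertion that admissible sets have $d_Z$--diameter comparable to their scale, to be recorded as a lemma there. Since the stopping--time construction already gives $\sum_j\mu(E_j)\le\alpha^{-1}\sum_j\int_{E_j}|f|\,\de\mu\le\alpha^{-1}\|f\|_1$, condition (e) reduces to the uniform \emph{enlargement estimate}
\begin{equation*}
\mu(E_j^*)\le C\,\mu(E_j),\qquad E_j^*=\{x:\ d_Z(x,E_j)<r_j\},
\end{equation*}
with $C$ independent of $j$, $f$, $\alpha$. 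This is the heart of the matter. The argument I have in mind exploits the product--like shape of the admissible sets: up to a left translation and an automorphic dilation, $E_j$ is essentially the product of a $d_N$--dyadic cube $Q_j\subset N$ of side $\delta_j$ with a segment of the flow of $Z$ whose length is comparable to (or larger than) $r_j$, while $d_Z$ controls the $d_N$--distance transversally to the flow (up to the appropriate dilation factor) and the flow parameter along it. Consequently $E_j^*$ is contained in the product of a fixed dilate of $Q_j$ with a flow segment of length at most a fixed multiple of the original one. The first enlargement costs only a bounded factor because $(N,d_N,\mu_N)$ is doubling; the second costs only a bounded factor because $E_j$ already spreads over a flow segment of length $\gtrsim r_j$ and, crucially, $\mu\in\mathcal F_Z$ being a $Z$--flow forces its density $\varphi$ to be constant along the orbits of $\exp(tZ)$, so that the $\mu$--mass of a flow tube factors as flow--length times cross--sectional $\mu$--mass. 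Disintegrating $\mu$ along the $Z$--flow and combining the two estimates yields $\mu(E_j^*)\le C\mu(E_j)$, hence (e) with $C_0\ge C+1$. If, as in \cite{HS,MOV}, $\mathcal D^Z$ splits into ``small'' and ``large'' admissible sets, this is carried out separately on the two classes: for small sets it is just the doubling of $(N,d_N,\mu_N)$ applied to a set of bounded eccentricity, while for large sets it is the elongation along the flow together with the $Z$--invariance of $\varphi$ that makes it work.

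The step I expect to be the main obstacle is exactly this enlargement estimate for the ``large'' admissible sets: there one cannot appeal to any doubling property of $(G,d_Z,\mu)$, which has exponential growth, and must instead control quantitatively how a $d_Z$--ball of radius $r_j$ attached to a long flow tube fails to fatten the $N$--cross--section beyond a controlled dilate of $Q_j$ — a computation resting on the explicit interplay between the definition of $d_Z$, the automorphic dilations, and the $Z$--flow invariance of $\varphi$. A secondary, essentially bookkeeping, difficulty is to make the correspondence between the abstract radii $r_j$ of Definition~\ref{def CZP} and the geometric scales of the members of $\mathcal D^Z$ uniform, and to prove the parent/child measure comparison used for (a); both are best isolated as lemmas about $\mathcal D^Z$ in Sections~\ref{sec: admissible sets and dyadic partitions} and~\ref{sec: CZP vertical flow}.
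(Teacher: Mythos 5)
Your proposal follows essentially the same route as the paper: the stopping-time decomposition over the dyadic family $\mathcal D^Z$ with the parent/child measure comparison gives (a)--(c) (this is Theorem~\ref{CZdec}), condition (d) is exactly the statement that an admissible cylinder sits inside a $d_Z$--ball of radius comparable to $\log r$ (Lemma~\ref{lem 1}), and your enlargement estimate $\mu(E_j^*)\le C\mu(E_j)$ — proved via the product formula $\mu(P_{r,E}(a))=2\mu_N(E)\log r$ from the $Z$--flow invariance, bounded stretching of the flow segment, and the doubling of $(N,d_N,\mu_N)$ applied to a controlled dilate of the cube — is precisely Lemma~\ref{lem 2}, with the small/large cases handled just as you indicate. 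No essential differences from the paper's argument.
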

	
	Theorem~\ref{main} is strictly more general than the result of \cite{MOV} previously cited. Indeed, right Haar measures belong to $\mathcal{F}_Z$ for any $Z\in\mathcal Z$, so that in Theorem \ref{main} one can always choose, in particular, $\mu=\rho$, as in \cite{MOV}. Moreover, (see Section \ref{sec: CZP for dg}) when $Z$ is the vertical vector field $X_0$ (see Section \ref{sec: preliminaries}), then $d_Z=d_G$. That said, \cite[Theorem 3.20]{MOV} can be rephrased saying that $(G,d_Z,\mu)$ has the CZP with respect to the family $\mathcal{D}^Z$ if $Z=X_0$ and $\mu=\rho$.

	We point out that our proof of Theorem \ref{main} is new (even in the known case in which $d_Z=d_G$ and $\mu=\rho$), since we cannot exploit the left-invariance of the metric nor that of the measure.
	
	As mentioned before, having the CZP property is a key ingredient to develop a theory of singular integrals on a metric measure space. In particular, Theorem \ref{main} implies boundedness properties for a class of linear integral operators on $(G,d_Z,\mu)$ whose kernels satisfy a Hörmander type condition, see Theorem \ref{integral operators 1}.
	%Hence, Theorem \ref{main} suggests that such a theory may be developed in the general framework of the spaces $(G,d_Z,\mu)$ considered here.
	A first natural project in this direction would be the study of the boundedness properties of the Riesz transform associated to a flow Laplacian on $G$ (i.e., a Laplacian operator self-adjoint on $L^2(\mu)$). Such boundedness has already been studied on the $ax+b$ group  (in \cite{GaudrySjogren, HS, Martini, Sjogren}), on homogeneous trees (in \cite{HS} and \cite{LMSTV}), and on nonhomogeneous trees in~\cite{MSTV}. We are not addressing this or other applications here, leaving it for a possible follow-up work or for other interested mathematicians.

	We now briefly describe the structure of the paper. In Section \ref{sec: preliminaries} we recall the basic notions on stratified Lie groups and their rank one extensions, and we introduce the family of vector fields $\mathcal{Z}$ and the class of measures $\mathcal{F}_Z$, where $Z\in \mathcal{Z}$.
	
	In Section \ref{sec: admissible sets and dyadic partitions} we introduce the class of \textit{admissible cylinders} in $G$, which resembles the class of admissible sets first appeared in the $ax+b$ group in \cite{GiuliniSjogren}, which in turn inspired \cite{HS, MOV}. We then prove that 
	if $\mu\in \mathcal{F}_Z$, then $(G,\mu)$ admits a family $\mathcal{D}^Z$ of (admissible) dyadic sets (Theorem \ref{thm dyadic partitions}). As a consequence, we deduce that $L^1(\mu)$ admits a CZ decomposition with respect to $\mathcal{D}^Z$ (Theorem \ref{CZdec}).
	
	In Section \ref{sec: maximal}, we consider the problem of the boundedness of the Hardy--Littlewood maximal function associated to the admissible cylinders  introduced in Section \ref{sec: admissible sets and dyadic partitions}. By means of a covering lemma for admissible sets (Proposition \ref{vitali}), in Theorem \ref{maximal theorem} we are able to show that the maximal function is of weak type (1,1).
	
	In Section \ref{sec: CZP vertical flow}, we introduce a flow metric $d_Z$ and, by means of some geometric lemmas, we are able to show that (d) and (e) in Definition \ref{def CZP} are satisfied on $(G,d_Z,\mu)$ by the sets in $\mathcal{D}^Z$. This, together with Theorem \ref{CZdec}, completes the proof of our main result, Theorem \ref{main}.
	
	In Section \ref{sec: CZP for dg} we compare our result with those previously available in the literature. First we observe, as mentioned before, that when the vector field $Z$ is vertical then $d_Z=d_G$, and therefore the result of \cite{MOV} (and, a fortiori, that of \cite{HS}) can be improved to: $(G,d_G,\mu)$ has the CZP with respect to the family $\mathcal{D}^{Z}$ for \textit{any} $\mu\in\mathcal{F}_{Z}$ (and not only $\mu=\rho$) if $Z$ is the vertical vector field. Next, we prove that when $N=\mathbb R^n$ we can even say more. Indeed, we show that in this case $d_G$ is equivalent to $d_Z$ for \textit{any} $Z\in \mathcal{Z}$. Hence, we can improve the result of \cite{HS} further to: for $N=\mathbb R^n$, $(G,d_G,\mu)$ has the CZP with respect to the family $\mathcal{D}^Z$ for any $Z\in \mathcal{Z}$, and any $\mu\in\mathcal{F}_Z$. We dedicate the last part of the section, and of the paper, to investigate whether, always with $d_G$ as underlying metric, the same level of generality in the choice of the family of sets and of the measure can be attained also when $N$ is nonabelian.  We give negative answer to this question by providing a counterexample in the extended Heisenberg group $\mathbb{H}^1_e$, also known as the shearlet group~\cite{DahDeMDeV}. In particular, in Theorem \ref{thm: counterexample} we consider a particular vector field $Z$ and we show that $(\mathbb{H}^1_e,d_{\mathbb{H}^1_e},\rho)$, which is known to have the CZP with respect to the family $\mathcal D^{X_0}$ already from~\cite{MOV}, does not have the CZP with respect to $\mathcal{D}^Z$.

	Throughout the work, we write $f(x)\lesssim g(x)$ if there exist a uniform constants $C>0$, such that $f(x)\leq C g(x)$, for every $x$, and we write $f(x)\approx g(x)$ if it is both $f(x)\lesssim g(x)$ and $g(x)\lesssim f(x)$. Constants carrying a numerical subscript, such as $C_1, C_2, \dots$ are meant to maintain their value across the whole paper, while $C$ will be used (typically in proofs) for a generic constant whose value may change from line to line.

	\section{Preliminaries and notation}\label{sec: preliminaries}

	A Lie algebra $\mathfrak n$ is said to be stratified of step $\step\in\mathbb N$, $\step\geq 1$, if it admits a vector space decomposition
	\[\mathfrak n=\bigoplus_{j=1}^\step \mathfrak n_j,\quad\text{with}\quad [\mathfrak n_i,\mathfrak n_j]\subset \mathfrak n_{i+j}.\]
	Every $\mathfrak n_j$ is a layer and $\dimhom:=\sum_{j=1}^\step j\dim(\mathfrak n_j)$
	is called the homogeneous dimension of $\mathfrak n$. A stratified Lie algebra $\mathfrak{n}$ can be equipped with a derivation $\partial$ such that $\mathfrak n_j$ is the eigenspace of $\partial$ corresponding to the eigenvalue $j$.
	A stratified Lie group $N$ is a simply connected Lie group whose Lie algebra $\mathfrak n$ is stratified.

	Any stratified Lie algebra, and then group, is nilpotent, hence unimodular. The push-forward of the Lebesgue measure on $\lie{n}$ via $\exp_N\colon\mathfrak n\to N$ is a left and right Haar measure on $N$, which we fix and denote by $dn$. The formula $D_a = \exp_N((\log a) \partial)$ defines a family of automorphic dilations $(D_a)_{a\in A}$ on $N$. Hence the Lie group $A=(\R_+,\cdot)$ acts on $N$ via $D_a\colon N\to N$ and we can consider the corresponding semidirect product $G=N\rtimes A$, namely the product $N\times A$ endowed with the multiplication
	\begin{equation*}
		(n,a)(n',a'):=(nD_a(n'),aa'),\qquad n,n'\in N\text{, }\,\,a,a'\in A.
	\end{equation*}
	The neutral element of $G$ is $1_G=(1_N,1)$ and the inverse of $(n,a)\in G$ is $(n,a)^{-1}=(D_{1/a}(n^{-1}),1/a)$. The group $G$ is a solvable Lie group, and the Lie algebra $\lie{g}$ of $G$ is naturally identified with the semidirect product of Lie algebras $\lie{n} \rtimes \lie{a}$ (see \S 3.14-3.15 in~\cite{Varadarajan}), namely $\mathfrak g=\mathfrak n\oplus \mathfrak a$, with
	\[ [(X,Y),(X',Y')]_\mathfrak g:=([X,X']_\mathfrak n+\partial_Y X'-\partial_{Y'} X,0),\qquad X,X'\in\mathfrak n,\,Y,Y'\in\mathfrak a,\] 
	where $\partial_Y$ denotes the differential at $Y\in \mathfrak a$ of the map $a\mapsto D_a$, hence a derivation of $\mathfrak a$.
	A left and a right Haar measures $\lambda$ and $\rho$ on $G$ are given  by
	\begin{equation*}%\label{leftinvhaarG}
		\di\lambda(n,a) = a^{-\dimhom -1} \di n \di a  \qquad \di\rho(n,a) =a^{-1} \di n \di a,    
	\end{equation*}
	respectively.
	In particular $G$ is not unimodular.

	We put $q_j=\dim\mathfrak n_j$, $1\leq j\leq \step$, and consider
	a basis $\{\vfN_{j,i}\colon 1\leq i\leq q_j\}$ of $\mathfrak n_j$. 
	We fix a scalar product on $\mathfrak n$ that makes 
	$\{\vfN_{j,i}\colon 1\leq j\leq \step,\,1\leq i\leq q_j\}$
	an orthonormal basis of $\mathfrak n$.
	Consequently, $\{\vfN_{1,1},\dots,\vfN_{1,q_1}\}$ is an orthonormal basis of $\mathfrak n_1$ and provides a subbundle $HN\subset TN$ that is called horizontal. We say that a curve $\gamma_N\colon [0,1]\to N$ of $N$ is horizontal if $\dot{\gamma}_N(t)\in HN$ for every $t\in (0,1)$. The \CaC distance $d_N(n,n')$ between two elements $n,n'\in N$ is given by the infimum of the lengths of the horizontal curves joining $n$ and $n'$. Since the horizontal distribution that makes $\vfN_{1,1},\dots,\vfN_{1,q_1}$ into an orthonormal basis is bracket-generating, the distance $d_N$ is finite and induces on $N$ the usual topology.
	Moreover the distance $d_N$ is left-invariant and homogeneous with respect to the automorphic dilations $D_a$, namely $d_N(D_a(n),D_a(n'))=a^\dimhom d_N(n,n')$, for every $n,n'\in N$ and $a\in A$.

	The vector fields $\vfN_{j,i}\in\mathfrak n$ introduced before can be lifted to left-invariant vector fields on $G$ by the formula
	\[\vfG_{j,i}|_{(n,a)} := a \vfN_{j,i}|_n \qquad \text{for}\quad j=1,\dots,\step,\quad i=1,\dots,q_j.\]
	%\[
	%\vfG_0|_{(n,a)} := \vfA|_a = a\partial_a,  \qquad \vfG_j|_{(n,a)} := a \vfN_j|_n \qquad \text{ for $j=1,\dots,\fdim.$}
	%\]
	Let $\vfA=a\frac{\de}{\de a}$ be the canonical basis on $\lie{a}$. We lift it to $G$ by \[\vfG_0|_{(n,a)} := \vfA|_a  .\]
	The system $\{\vfG_0,\vfG_{1,1},\dots,\vfG_{1,q_1}\}$ generates the Lie algebra $\lie{g}$ and defines a sub-Riemannian structure on $G$ with associated horizontal distribution $HG$, sub-Riemannian metric $g$ and left-invariant \CaC distance $d_G$.
	The following relation between the \CaC distances on $G$ and $N$ is proved in \cite[Proposition 2.7]{MOV}.
	
	\begin{prop}\label{prp:distance}
		For all $(n,a),(n'a')\in G$,
		\begin{equation}\label{CC distance}
			\cosh\Big(d_G((n,a),(n',a'))\Big)=\cosh\Big(\log\frac{a}{a'}\Big)+\frac{1}{2aa'}d_N(n,n')^2.
		\end{equation}
	\end{prop}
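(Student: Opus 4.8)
The plan is to reduce the statement to a distance computation from the identity. By left-invariance of $d_G$, for $(n,a),(n',a')\in G$ we have $d_G((n,a),(n',a'))=d_G(1_G,(m,b))$, where $(m,b):=(n,a)^{-1}(n',a')=(D_{1/a}(n^{-1}n'),\,a'/a)$; thus $b=a'/a$, and by the left-invariance and homogeneity of $d_N$ one has $d_N(1_N,m)=a^{-1}d_N(n,n')$. Hence it suffices to prove
\[
\cosh\!\big(d_G(1_G,(m,b))\big)=\cosh(\log b)+\frac{d_N(1_N,m)^2}{2b},
\]
since substituting back yields $\cosh(\log b)=\cosh(\log(a/a'))$ and $d_N(1_N,m)^2/(2b)=d_N(n,n')^2/(2aa')$.

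The core of the argument is a direct analysis of horizontal curves in $G$. Writing a Lipschitz horizontal curve as $\gamma(t)=(\gamma_N(t),\gamma_A(t))$, $t\in[0,1]$, and using that $HG$ is spanned by $\vfG_0|_{(n,a)}=a\,\partial_a$ and $\vfG_{1,i}|_{(n,a)}=a\,\vfN_{1,i}|_n$, one sees that the $N$-component of a horizontal vector always lies in $HN$, so that $\gamma_N$ is itself a horizontal curve of $N$, and that the sub-Riemannian length of $\gamma$ equals
\[
\int_0^1\frac{1}{\gamma_A(t)}\sqrt{\dot\gamma_A(t)^2+|\dot\gamma_N(t)|_{HN}^2}\,\de t .
\]
Setting $u(t):=\log\gamma_A(t)$ and $v(t):=|\dot\gamma_N(t)|_{HN}\ge 0$, this becomes $\int_0^1\sqrt{\dot u^2+e^{-2u}v^2}\,\de t$, to be minimised over $u$ with $u(0)=0$, $u(1)=\log b$, and over horizontal curves $\gamma_N$ of $N$ joining $1_N$ to $m$.

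The next step is to decouple the two factors. The curve $\gamma_N$ enters the functional only through $v$, and $\int_0^1 v=\mathrm{length}(\gamma_N)\ge L:=d_N(1_N,m)$; conversely, every nonnegative $v$ with prescribed integral is realised by reparametrising a suitable curve. Writing $\xi(t):=\int_0^t v$ and performing the change of variable $\eta:=e^{u}$, the quantity $\int_0^1\sqrt{\dot u^2+e^{-2u}\dot\xi^2}\,\de t$ is exactly the length, in the hyperbolic metric $(\de\xi^2+\de\eta^2)/\eta^2$ of the upper half-plane $\mathbb{H}^2$, of the curve $t\mapsto(\xi(t),\eta(t))$ joining $(0,1)$ to $(\xi(1),b)$. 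One then obtains $d_G(1_G,(m,b))=d_{\mathbb{H}^2}\big((0,1),(L,b)\big)$, and the classical identity $\cosh d_{\mathbb{H}^2}\big((\xi_1,\eta_1),(\xi_2,\eta_2)\big)=1+\big((\xi_1-\xi_2)^2+(\eta_1-\eta_2)^2\big)/(2\eta_1\eta_2)$ applied to $(0,1)$ and $(L,b)$ gives precisely $\cosh(\log b)+L^2/(2b)$, as required.

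The point that needs care — and that I expect to be the main obstacle — is precisely the identification $d_G(1_G,(m,b))=d_{\mathbb{H}^2}((0,1),(L,b))$. The inequality ``$\ge$'' follows because an arbitrary horizontal $\gamma$ produces a curve in $\mathbb{H}^2$ from $(0,1)$ to $(\xi(1),b)$ with $\xi(1)\ge L$, together with the monotonicity of $\xi\mapsto d_{\mathbb{H}^2}((0,1),(\xi,b))$ for $\xi\ge 0$ (immediate from the distance formula, since $\cosh$ of it is increasing in $\xi^2$). For ``$\le$'' one must check that the hyperbolic geodesic from $(0,1)$ to $(L,b)$ — a vertical segment when $L=0$, otherwise an arc of the Euclidean circle centred at $c=(L^2+b^2-1)/(2L)$ on the $\xi$-axis — can be run with $\xi$ monotonically increasing from $0$ to $L$, so that it is admissible for the constrained problem; this is a short case distinction according to the position of $c$ relative to $[0,L]$ (if $c\notin[0,L]$ both endpoints lie on a single monotone quarter-arc, while if $c\in[0,L]$ the geodesic passes over the top but $\xi$ is still monotone between the two endpoints), and the required $\gamma_N$ is then obtained by reparametrising a minimising $d_N$-geodesic from $1_N$ to $m$ with speed $v=\dot\xi$.
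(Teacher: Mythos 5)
Your argument is correct. Note first that the paper does not prove this proposition at all: it simply quotes it from \cite[Proposition 2.7]{MOV}, so there is no internal proof to compare with; your proof is essentially the standard one underlying that reference, namely reduction by left-invariance and homogeneity to the distance from $1_G$, followed by the identification of the constrained length-minimisation problem with geodesics in the hyperbolic upper half-plane. All the steps check out: the inverse formula gives $(m,b)=(D_{1/a}(n^{-1}n'),a'/a)$ and $d_N(1_N,m)=a^{-1}d_N(n,n')$ (you correctly use degree-one homogeneity $d_N(D_a n,D_a n')=a\,d_N(n,n')$; the exponent $\dimhom$ in the paper's statement of homogeneity is a typo), the frame $\{X_0,X_{1,i}\}$ forces $\gamma_N$ to be horizontal in $N$ and yields the length functional you write, and the substitution $\eta=e^u$, $\xi=\int v$ turns it into hyperbolic length, with the lower bound following from $\xi(1)\ge L$ plus monotonicity of $\xi\mapsto\cosh d_{\mathbb H^2}((0,1),(\xi,b))=1+\bigl(\xi^2+(b-1)^2\bigr)/(2b)$, and the final identity $1+\bigl(L^2+(b-1)^2\bigr)/(2b)=\cosh(\log b)+L^2/(2b)$. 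Two small points you could tighten: the case distinction on the centre $c$ is unnecessary, since along any geodesic arc $(c+R\cos\theta,R\sin\theta)$ the abscissa is automatically monotone in $\theta\in(0,\pi)$; and for the upper bound you implicitly use the existence of a length-minimising horizontal curve in $N$ from $1_N$ to $m$ — this holds because $(N,d_N)$ is a complete, locally compact length space (Hopf--Rinow), but if you prefer to avoid invoking it, running your construction with an $\varepsilon$-almost-minimiser of length $\le L+\varepsilon$ and letting $\varepsilon\to0$ gives the same bound by continuity of the hyperbolic distance formula in $L$.
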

	Now we consider the scalar product $\langle\cdot,\cdot\rangle\colon\mathfrak g\times \mathfrak g\to \mathbb C$ that makes 
	\[\{X_0\}\cup\{\vfG_{j,i}\colon 1\leq i\leq q_j,\, 1\leq j\leq \step\}\]
	an orthonormal basis of $\mathfrak g$. We consider the vector fields with nonvanishing, hence normalized, vertical component, namely
	\[\mathcal Z:=\{Z\in\mathfrak g\colon \langle Z,X_0\rangle=1\}.\]
	Observe that for every $Z\in \mathcal Z$,
	\[Z-X_0\in\{X_0\}^\perp=\Span\{X_{j,i}\colon 1\leq i\leq q_j, 1\leq j\leq \step\}=\mathfrak n\oplus\{0\}\subset\mathfrak g.\] 
	Since
	for every $t\in\mathbb R$, $\exp(tX_0)=(1_N,e^t)$,
	there exists $n(t)\in N$ such that
	\[(n(t),1)=\exp(t(Z-X_0))=\exp(tZ)\exp(-tX_0),%\quad\Rightarrow\quad \exp(tZ)=(n,e^t).
	\] 
	namely,
	\[\exp{(tZ)}=(n(t),e^{t}),\quad t\in \mathbb{R}.\]

	\begin{defn}
		Given $Z\in \mathcal{Z}$, we say that a Borel measure $\mu$ on $G$ is a $Z$-\textit{flow measure} if it is absolutely continuous  with respect to the right Haar measure~$\rho$, and the Radon-Nikodym derivative $\varphi\colon N\times A\to [0,+\infty)$ is such that
		\begin{equation}\label{invar def}    \varphi(n,a)=\varphi((n,a)\exp({tZ})), \quad \text{for every } (n,a)\in G,\,\,\, t\in\RR.
		\end{equation}
		
	\end{defn}

	Clearly, if $\mu$ is a $Z$-flow measure, then
	\begin{equation}\label{invariance}
		\mu(E\exp(tZ))=\mu(E),\qquad\text{for every Borel set }E\subset G,\,\,t\in\R.
	\end{equation}
	
	We associate to $\mu$ a measure on $N$ given by
	\begin{equation*}
		\mu_N(F)=\int_F \varphi (n,1)dn, \quad \text{for every Borel set } F\subset N.
	\end{equation*}

	In this paper we will only consider $Z$-flow measures $\mu$ such that $(N,d_N,\mu_N)$ is a doubling metric measure space. Therefore we put %the class of all such $Z$-flow measures by 
	\[ \mathcal{F}_Z:=\{Z\text{-flow measures }\mu\colon (N,d_N,\mu_N)\text{ is doubling}\}.\]

	We recall that a measure $\nu$ on a metric space $X$ is doubling if for every $C>1$ there exists a constant $D(\nu,C)>1$ such that
	\begin{equation}\label{doubling}
		\nu(B(x,C r))\leq D(\nu,C) \nu(B(x,r)), \quad \text{for every } x\in X, r>0.
	\end{equation}

	Observe that the right Haar measure $\rho$ is in $\mathcal{F}_Z$ with respect to any vector field $Z\in\mathcal Z$, since $\varphi\equiv 1$ satisfies~\eqref{invar def}. Furthermore, in such case, $\rho_N$ is a Haar measure of $N$, which is doubling with respect to a \CaC metric on $N$.

	\begin{oss}
		Observe that, for any $Z\in\mathcal{Z}$, \textit{any} doubling measure $\mu_N$ on $N$ which is absolutely continuous with respect to $dn$, can be associated to some $Z-$flow in the sense of the above definition. Indeed, if $\psi:N\to [0,+\infty)$ is the density of $d\mu_N$ with respect to $dn$, then it is not difficult to see that the function $\varphi:G\to [0,+\infty)$ defined by $\varphi(n,a)=\psi(n n(\log a)^{-1})$ is such that $\varphi(n,1)=\psi(n)$ and satisfies \eqref{invar def}, so that the measure on $G$ having $\varphi$ as a density is in $\mathcal{F}_Z$ and $\mu_N$ is its associated measure on $N$. 
	\end{oss}

	\section{Admissible cylinders and dyadic partitions}\label{sec: admissible sets and dyadic partitions}
	
	In this section we first define a family of sets in $G$ which we call \textit{cylinders} and we discuss a number of useful properties they enjoy. Then we introduce the subfamily of the \textit{admissible cylinders}. Finally, we prove the existence of a family $\mathcal{D}^Z$ of dyadic partitions of $G$ made of admissible cylinders, which leads to a CZ  decomposition for functions in $L^1(\mu)$.

	\subsection{Cylinders}
	%In order to introduce the notion of admissible cylinder, first we define a more general class of sets, which we call \textit{cylinders}.
	\begin{defn}\label{def cil}
		Let $E$ be any subset of $N$, $r>1$ and $a\in  A$. The cylinder $P_{r,E}(a)$ is defined by
		\begin{equation*}%\label{cylinder}
			P_{r,E}(a)=\Big\{(n,1)\exp{(tZ)}: n\in E, \ t\in U_r(a)\Big\},
		\end{equation*}
		where
		\begin{equation}\label{Ur}
			U_r(a)=\Big(\log\Big(\frac{a}r\Big),\log(ar)\Big)\subset\R.
		\end{equation}
		We say that $E$ is the base set of $P_{r,E}(a)$.
	\end{defn}
	%Given a set in $N$, the idea is to consider a (connected) portion of its lifting from $N\times\{1\}$ to $G$ via the vector field $Z$.
	
	The next proposition collects some properties enjoyed by cylinders which will be useful in the sequel.
	
	\begin{prop}\label{prop-cyl}
		For any $Z\in \mathcal{Z}$, every $s\in \mathbb R$, $r, r_1,r_2>1$, $a,a_1,a_2\in A$, $E, E_1,E_2\subset N$, $m\in N$, the following hold:
		\begin{itemize}
			\item[(i)] $P_{r,E}(a)\exp{(sZ)}=P_{r,E}(ae^{ s})$;
			\item[(ii)] $\exp{(sZ)}P_{r,E}(a)=P_{r,\psi_s(E)}(a e^{ s})$,\quad\text{where}\quad 
			%\ \ \ \ \ %$\m{\psi_s(Q)}:=n(s)D_{e^s}Qn(s)^{-1}$
			$\psi_s(m):=n(s)D_{e^s}(m)n(s)^{-1}$;
			\item[(iii)] $(m,1)P_{r,E}(a)=P_{r,mE}(a)$;
			\item[(iv)] two cylinders $P_i=P_{r_i,E_i}(a_i)$, $i=1,2$, intersect if and only if $E_1\cap E_2\neq\emptyset$ and $U_{r_1}(a_1)\cap U_{r_2}(a_2)\neq \emptyset$;
			\item[(v)] let $P_i=P_{r_i,E_i}(a_i)$, $i=1,2$. Then,
			\begin{equation*}
				P_1P_2\supset P_{r_1r_2,E_1\cdot \Psi_{r_1,a_1}(E_2)}(a_1a_2),\quad\text{where}\quad\Psi_{r,a}(E'):=\bigcap_{t\in U_{r}(a)}\psi_t(E');
			\end{equation*}
			\item[(vi)] if $\mu$ is a $Z$-flow measure, then    $$\mu(P_{r,E}(a))=2\mu_N (E)\log r .$$
		\end{itemize}
	\end{prop}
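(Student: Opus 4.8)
The plan is to deduce all six claims from two structural facts together with elementary bookkeeping on the intervals $U_r(a)$. The first fact is that, since $\exp(tZ)=(n(t),e^t)$, the map $\Phi\colon N\times\mathbb R\to G$ given by $\Phi(n,t):=(n,1)\exp(tZ)=(n\,n(t),e^t)$ is a bijection, with inverse $(m,b)\mapsto\big(m\,n(\log b)^{-1},\log b\big)$; thus $P_{r,E}(a)=\Phi\big(E\times U_r(a)\big)$. The second is the commutation relation
\[\exp(sZ)\,(n,1)=(\psi_s(n),1)\,\exp(sZ),\qquad \psi_s(n)=n(s)\,D_{e^s}(n)\,n(s)^{-1},\]
which comes straight from the semidirect product law, since $(n(s),e^s)(n,1)=(n(s)D_{e^s}(n),e^s)=\big(\psi_s(n)\,n(s),e^s\big)=(\psi_s(n),1)(n(s),e^s)$; note that each $\psi_s$ is an automorphism of $N$, being the composition of the automorphic dilation $D_{e^s}$ with conjugation by $n(s)$, and in particular a bijection. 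Besides these, I will use the two identities that are immediate from \eqref{Ur}: $U_r(a)+s=U_r(ae^s)$ for $s\in\mathbb R$, and $U_{r_1}(a_1)+U_{r_2}(a_2)=U_{r_1r_2}(a_1a_2)$ as a sum of open intervals (here $r_1r_2>1$).

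With these in hand, (i) is immediate because $(n,1)\exp(tZ)\exp(sZ)=(n,1)\exp((t+s)Z)$ and $t\mapsto t+s$ maps $U_r(a)$ onto $U_r(ae^s)$; (ii) follows from the commutation relation, which turns $\exp(sZ)(n,1)\exp(tZ)$ into $(\psi_s(n),1)\exp((s+t)Z)$, on letting $n$ range over $E$ and $t$ over $U_r(a)$; and (iii) follows from $(m,1)(n,1)=(mn,1)$. For (iv) I would use injectivity of $\Phi$: the equality $(n,1)\exp(tZ)=(n',1)\exp(t'Z)$ forces $n=n'$ and $t=t'$, so two cylinders meet if and only if one can choose a common base point in $E_1\cap E_2$ and a common height in $U_{r_1}(a_1)\cap U_{r_2}(a_2)$, which is exactly the stated condition.

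The core step is (v), and I expect it to be the main obstacle, since one must carry the shear $\psi_{s_1}$ through the product and reconcile it with the intersection defining $\Psi_{r_1,a_1}$. By the commutation relation, a generic element of $P_1P_2$ rewrites as
\[(m_1,1)\exp(s_1Z)\,(m_2,1)\exp(s_2Z)=\big(m_1\,\psi_{s_1}(m_2),1\big)\,\exp\big((s_1+s_2)Z\big),\]
with $m_i\in E_i$ and $s_i\in U_{r_i}(a_i)$; thus $P_1P_2$ is the $\Phi$-image of the set of all $(m_1\psi_{s_1}(m_2),\,s_1+s_2)$ with $m_i\in E_i$, $s_i\in U_{r_i}(a_i)$. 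Now take $n_1\in E_1$, $n_2\in\Psi_{r_1,a_1}(E_2)$ and $t\in U_{r_1r_2}(a_1a_2)$. By the sum-set identity, $t=s_1+s_2$ for some $s_i\in U_{r_i}(a_i)$; put $m_1:=n_1$ and $m_2:=\psi_{s_1}^{-1}(n_2)$, which lies in $E_2$ precisely because $s_1\in U_{r_1}(a_1)$ forces $n_2\in\psi_{s_1}(E_2)$, this being an intersection over all such $s_1$. Then $\big(m_1\psi_{s_1}(m_2),1\big)\exp\big((s_1+s_2)Z\big)=(n_1n_2,1)\exp(tZ)$, and letting $n_1,n_2,t$ vary yields $P_1P_2\supset P_{r_1r_2,\,E_1\cdot\Psi_{r_1,a_1}(E_2)}(a_1a_2)$. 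The nonuniqueness of the decomposition $t=s_1+s_2$ is harmless precisely because $\Psi_{r_1,a_1}(E_2)$ is defined as an intersection over the entire interval $U_{r_1}(a_1)$.

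Finally, for (vi) I would push the right Haar measure forward along $\Phi$: for fixed $t$ the map $n\mapsto n\,n(t)$ is a right translation of the unimodular group $N$, hence preserves $dn$, while $b=e^t$ gives $db=b\,dt$; therefore $\Phi^{*}\big(a^{-1}\,dn\,da\big)=dn\,dt$. Since the density $\varphi$ satisfies \eqref{invar def}, $\varphi(\Phi(n,t))=\varphi((n,1)\exp(tZ))=\varphi(n,1)$, whence
\[\mu\big(P_{r,E}(a)\big)=\int_{U_r(a)}\int_E\varphi(n,1)\,dn\,dt=|U_r(a)|\;\mu_N(E)=2\,\mu_N(E)\log r,\]
using $|U_r(a)|=\log(ar)-\log(a/r)=2\log r$. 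This last step is routine once the Jacobian is identified; the only genuinely delicate point in the proposition is the product formula (v).
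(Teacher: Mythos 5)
Your proposal is correct and follows essentially the same route as the paper: the same commutation relation $\exp(sZ)(n,1)=(\psi_s(n),1)\exp(sZ)$, the same interval identities for $U_r(a)$ (your sumset identity is exactly the paper's union $\bigcup_{t\in U_{r_1}(a_1)}U_{r_2}(a_2e^t)=U_{r_1r_2}(a_1a_2)$), and the same change of variables $(n,a)=(n',1)\exp(tZ)$ with unimodularity of $N$ for (vi). The only differences are cosmetic: you verify (v) by an element chase using $\psi_{s_1}^{-1}$ instead of manipulating unions of cylinders, and you compute (vi) directly for general $a$ rather than first reducing to $a=1$ via (i) and the flow invariance.
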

	\begin{proof}
		Property $(i)$ is immediate since
		\begin{equation*}
			\begin{split}
				P_{r,E}(a)\exp{(sZ)}&=\Big\{ (n,1)\exp{((t+s)Z)}: n\in E, t+s\in U_r(a)+s\Big\}
			\end{split}
		\end{equation*}
		and 
		\begin{equation}\label{traslaU}
			U_r(a)+s=U_r(ae^s).
		\end{equation}
		To prove $(ii)$, first observe that
		\begin{equation*}
			\begin{split}
				\exp(sZ)(n,1)&=(n(s)D_{e^s}n,e^s)=(\psi_s(n)n(s),e^s)=(\psi_s(n),1)\exp(sZ).
			\end{split}  
		\end{equation*}
		Clearly $n\in E$ if and only if  $\psi_s(n)\in\psi_s(E)$.
		%$\varphi(n,s):=n(s)D_{e^s}nn(s)^{-1}\in \m{\psi_s(Q)}$
		%\via{Moreover, $t\in U_r(a)$ if and only if
		%\begin{equation*}
		%    s+t\in s+U_r(a)=U_r(ae^s).
		%\end{equation*}}
		Hence we have,  by~\eqref{traslaU}
		\begin{equation*}
			\begin{split}
				\exp{(sZ)}P_{r,E}(a)&=\Big\{\exp(sZ)(n,1)\exp{(tZ)}:  n\in E, \ t\in U_r(a)\Big\}\\
				&=\Big\{( \psi_s(n),1)\exp{((s+t)Z)}:  n\in E, \ s+t\in U_r(ae^s)\Big\}\\
				&=\Big\{( n',1)\exp{ ( t' Z)}:  n'\in\psi_s(E), \ t'\in U_r(ae^s)\Big\}\\
				&=P_{r, \psi_s(E)}(a e^{ s}).
			\end{split}
		\end{equation*}
		It is  straightforward to get $(iii)$:
		\begin{equation*}
			\begin{split}
				( m,1)P_{r,E}(a)&=\Big\{ ( m,1)( n,1):  n\in E,\ t\in U_r(a)\Big\}\\ 
				&=\Big\{ (mn,1):  mn\in  mE,\ t\in U_r(a)\Big\}=P_{r, mE}(a).
			\end{split}  
		\end{equation*}
		We now prove $(iv)$. Two cylinders $P_i=P_{r_i,E_i}(a_i)$, $i=1,2$, intersect if and only if there exists $n_i\in E_i$ and $t_i\in U_{r_i}(a_i)$ such that
		\begin{equation*}
			(n_1,1)=(n_2,1)\exp((t_2-t_1)Z)=(n_2n(t_2-t_1),e^{t_2-t_1}).
		\end{equation*}
		This is possible if and only if $t_1=t_2$ and $n_1=n_2$. Therefore, $P_1\cap P_2\neq\emptyset$ if and only if $E_1\cap E_2\neq\emptyset$ and $U_{r_1}(a_1)\cap U_{r_2}(a_2)\neq \emptyset$, which is  $(iv)$.
		
		We now turn to $(v)$. First observe that by means of $(ii)$ and $(iii)$ we have
		\begin{equation*}
			(n,1)\exp(tZ)P_2=(n,1)P_{r_2, \psi_t(E_2)}(a_2e^t)=P_{r_2,n \psi_t(E_2)}(a_2e^t).
		\end{equation*}
		Therefore, we can write
		\begin{equation*}
			\begin{split}
				P_1P_2&= \bigcup_{x\in P_1}xP_2= \bigcup_{n\in E_1}\bigcup_{t\in U_{r_1}(a_1)}(n,1)\exp(tZ)P_2\\
				&= \bigcup_{n\in E_1}\bigcup_{t\in U_{r_1}(a_1)}P_{r_2,n \psi_t(E_2)}(a_2e^t)= \bigcup_{t\in U_{r_1}(a_1)}P_{r_2,E_1 \psi_t(E_2)}(a_2e^t).
			\end{split}    
		\end{equation*}
		But for any $t\in U_{r_1}(a_1)$ we have $E_1\psi_t(E_2)\supset E_1\Psi_{r_1,a_1}(E_2)$, and hence
		\begin{equation*}
			\begin{split}
				P_1P_2&=\bigcup_{t\in U_{r_1}(a_1)}P_{r_2,E_1\psi_{t}(E_2)}(a_2e^{t})
				\\&=\bigcup_{t\in U_{r_1}(a_1)}\Big\{ (m,1)\exp(sZ): m\in E_1 \psi_{t}(E_2), \ s\in U_{r_2}( a_2e^{t})\Big\}\\
				&\supset \Big\{ (m,1)\exp(sZ): m\in  E_1\Psi_{r_1,a_1}(E_2), \ s\in  \bigcup_{t\in U_{r_1}(a_1)}U_{r_2}(a_2e^t)\Big\},
			\end{split}
		\end{equation*}
		where $\Psi_{r,a}$ is defined as in the statement.
		Hence, $(v)$ follows from the fact that
		\begin{equation*}
			\bigcup_{t\in U_{r_1}(a_1)}U_{r_2}(a_2e^t)=U_{r_1r_2}(a_1a_2).
		\end{equation*}
		Finally, by $(i)$,~\eqref{invariance}, and the change of variables $(n,a)=(n',1)\exp(tZ)$, one has
		\begin{equation*}
			\begin{split}
				\mu(P_{r,E}(a))&=\mu(P_{r,E}(1)\exp(\log a Z))=\mu(P_{r,E}(1))\\
				&=\int_{-\log r}^{\log r}\int_{E}\varphi((n,1)\exp{(tZ))}dn dt\\
				&=\int_{-\log r}^{\log r}dt\int_{E}\varphi(n,1)dn\\
				&=2\log r \mu_N (E),
			\end{split}    
		\end{equation*}
		which gives $(vi)$ and completes the proof.
	\end{proof}
	
	\subsection{Admissible cylinders}
	In order to introduce the family of admissible cylinders, we need to recall a celebrated result by Christ, which guarantees the existence of systems of dyadic cubes in any doubling metric space. We will apply the result to the doubling space $(N,d_N,\mu_N)$.

	\begin{thm}[\cite{christ1990}]\label{christ}
		Let $(N,d_N,\mu_N)$ be a doubling metric space. There exist a family $\mathcal Q:=\{Q\in\mathcal Q_k\colon k\in\mathbb Z\}$ of open sets of $N$ and constants $\delta\in (0,1)$, $C_1,c>0$, such that
		for each $k\in\zz$, $ \mathcal Q_k$ consists of countably many pairwise disjoint subsets of $N$ enjoying the following properties:
		\begin{enumerate}[label=(\roman*)]
			\item $\displaystyle \mu_N\Bigl(N\setminus \bigsqcup_{Q\in \mathcal Q_k} Q\Bigr)=0$, for every $k\in \mathbb{Z}$;
			\item for every $Q\in\mathcal{Q}_k$, there exists a unique set $ p_N(Q)\in\mathcal{Q}_{k-1}$ such that $Q\subset  p_N(Q)$, while $Q\cap Q'=\emptyset$ for any other $Q'\in \mathcal Q_{k-1}$;
			%\item $\diam_N Q=\sup\{d_N(n,n'): n,n'\in Q\}\leq C_1 \delta^k$,  for $Q\in \mathcal Q_k$;
			\item\label{eccentricityChrist} for each $Q\in\mathcal Q_k$ there exists a point $n_Q\in Q$ such that 
			\[B_N(n_Q,c\delta^k)\subset Q\subset B_N(n_Q,C_1\delta^k);\]
			\item\label{volumeChrist} for each $Q\in\mathcal Q$, $\mu_N( p_N(Q))\leq C_1\mu_N(Q)$ and if we put $s_N(Q):=\{Q'\in\mathcal{Q}\colon Q= p_N(Q')\}$, then $\# s_N(Q)\leq C_1$.
		\end{enumerate}
	\end{thm}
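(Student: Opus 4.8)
The statement is Christ's dyadic-cube theorem, so the plan is to run the standard net-and-nesting construction on $(N,d_N,\mu_N)$, which is a complete, separable, locally compact metric measure space. First we fix $\delta\in(0,1)$ small — the smallness to be quantified at the end in terms of the doubling constant of $\mu_N$ — and, for each scale $k\in\mathbb Z$, use Zorn's lemma to choose a maximal $\delta^k$-separated set $\{n_\alpha^k\}_{\alpha\in A_k}\subset N$; separability together with local doubling forces each index set $A_k$ to be countable. Maximality supplies the two facts we will invoke repeatedly: the balls $B_N(n_\alpha^k,\tfrac12\delta^k)$ are pairwise disjoint, while the balls $B_N(n_\alpha^k,\delta^k)$ cover $N$. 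Next, for each $\alpha\in A_k$ we pick one index $\hat\alpha\in A_{k-1}$ with $d_N(n_\alpha^k,n_{\hat\alpha}^{k-1})<\delta^{k-1}$ — possible by the covering property at scale $k-1$ — and call $n_{\hat\alpha}^{k-1}$ the parent of $n_\alpha^k$; iterating, for every pair $k\le m$ we obtain a descent relation $(\beta,m)\preceq(\alpha,k)$, under which $\bigsqcup_k A_k$ becomes a forest in which every vertex has a unique parent.

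From the centres we then build the cubes by the usual refinement-and-limit procedure: to each level-$k$ centre we attach a preliminary cell — a Voronoi-type region among the level-$k$ centres, or equivalently an increasing limit over $m\ge k$ of finite unions of the $\tfrac15\delta^m$-balls about its level-$m$ descendants — and we take the tree-consistent closure, so that $Q_\beta^{k+1}\subset Q_\alpha^k$ holds precisely when $(\beta,k+1)\preceq(\alpha,k)$. This is the delicate part of the argument: since $k$ ranges over all of $\mathbb Z$ there is neither a coarsest nor a finest scale at which to anchor a recursive definition of nested sets, so the level-$k$ cubes have to be produced as limits over finer and finer resolutions, and one must control, in a $\delta$-dependent way, how far the descendants of a given centre are allowed to drift. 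Granting that this is carried through, the output is, for each $k$, a family $\mathcal Q_k=\{Q_\alpha^k\}_\alpha$ of countably many pairwise disjoint Borel sets covering $N$ up to a $\mu_N$-null set — this is (i) — which is nested with unique parents $p_N(Q_\alpha^k)=Q_{\hat\alpha}^{k-1}$ — this is (ii).

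The remaining properties are then read off with $n_Q:=n_\alpha^k$. The outer containment $Q_\alpha^k\subset B_N(n_Q,C_1\delta^k)$ is immediate: a descendant centre $n_\beta^m$ is joined to $n_\alpha^k$ by at most $m-k$ parent steps of lengths $<\delta^{k},\delta^{k+1},\dots$, hence $d_N(n_\beta^m,n_\alpha^k)<\sum_{j\ge0}\delta^{k+j}=\delta^k/(1-\delta)$, and the attached ball contributes only an extra $\tfrac15\delta^m$, so any $C_1>1/(1-\delta)$ works. The inner containment $B_N(n_Q,c\delta^k)\subset Q_\alpha^k$ is the subtle one: we have to rule out that a point close to $n_\alpha^k$ is absorbed into a neighbouring cube, and this uses both the $\delta^k$-separation of the level-$k$ net and the outer bound just obtained — so that, once $\delta$ is small enough, each cube genuinely sits near its own centre. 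Finally (iv) follows from (iii) and doubling: $\mu_N(p_N(Q))\le\mu_N(B_N(n_Q,C\delta^{k-1}))\le C_1\mu_N(B_N(n_Q,c\delta^k))\le C_1\mu_N(Q)$, while the children of $Q$ have $\delta^{k+1}$-separated centres all lying in $B_N(n_Q,\delta^k)$, so the pairwise disjoint $\tfrac12\delta^{k+1}$-balls about them fit into a single ball of radius comparable to $\delta^k$ and a doubling volume count bounds their number by a constant depending only on $\mu_N$.

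The one genuinely delicate step, as flagged above, will be the construction of the nested cubes together with the inner ball bound: the absence of an extremal scale, the drift of the tree of centres, and the need to reconcile \emph{nested} with \emph{covers $N$ at every level up to a null set} are exactly what force the limiting construction and the quantitative choice of the smallness of $\delta$ in terms of the doubling constant. Everything else — countability of the nets, the covering/disjointness dichotomy, the geometric-series outer estimate, and the doubling count behind (iv) — is routine.
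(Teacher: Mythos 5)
The paper does not actually prove this statement: it is imported verbatim from Christ's paper \cite{christ1990}, with only the observation (which you also make) that the volume-control property (iv) follows from the eccentricity property (iii) together with doubling. So there is no internal proof to compare against, and your proposal has to stand on its own as a proof of Christ's theorem. As such it is an outline of the standard net-and-parent construction, and its peripheral parts are fine (countability of the maximal $\delta^k$-separated nets, the disjointness/covering dichotomy, the geometric-series outer containment, and the doubling count behind (iv)); but it stops short exactly at the steps that constitute the theorem.

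Concretely, two gaps. First, the construction of the cubes themselves --- your ``tree-consistent closure''/limiting procedure --- is only described, and with it properties (ii) and the inner inclusion in (iii) are explicitly deferred (``granting that this is carried through''). The mechanism you hint at for the inner inclusion does not close: if $x$ with $d_N(x,n^k_\alpha)<c\delta^k$ were absorbed into the cube of another level-$k$ centre $n^k_\gamma$, then the outer containment gives $d_N(x,n^k_\gamma)<C_1\delta^k$, hence $d_N(n^k_\alpha,n^k_\gamma)<(c+C_1)\delta^k$ with $C_1>1/(1-\delta)>1$; this is perfectly compatible with the $\delta^k$-separation of the level-$k$ net, so no contradiction arises for any $\delta\in(0,1)$, no matter how small. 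Christ's actual argument requires the cubes to be built from balls of small radius $a_0\delta^l$ about descendant centres, a careful rule in the parent assignment, and an inductive chain argument fixing the constants $a_0,c,\delta$ jointly; none of this appears in your sketch, and ``$\delta$ small in terms of the doubling constant'' is not by itself the fix. Second, property (i) --- that at each level the open, pairwise disjoint cubes exhaust $N$ up to a $\mu_N$-null set --- is nowhere addressed: the union of the closed cubes covers $N$, but one must show the overlaps/boundaries are $\mu_N$-null, which in Christ's paper rests on a quantitative small-boundary estimate and does not follow from anything you wrote. Since the intended route here is simply to cite \cite{christ1990}, the efficient repair is to do the same; as a self-contained proof, the proposal is incomplete at its core.
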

	
	We refer to the properties~$(iii)$ and~$(iv)$ as \textit{eccentricity condition} and \textit{volume control condition}, respectively. It is immediate that~$(iv)$ is a consequence of~$(iii)$, and $\# s_N(Q)\leq C_1$ follows by $\mu_N(p_N(Q))\leq C_1\mu(Q)$. 
	For simplicity and without loss of generality, we shall assume $C_1\geq 3$ and $\# s_N(Q)\geq 2$ for every $Q$. %, otherwise $k\mapsto \mathcal Q_k$ would be constant and this contradicts~$(iii)$. This fact implies $C_1\geq 2$. 
	Furthermore, %~$(vii)$ follows by~$(vi)$, indeed, 
	if $Q_1$, $Q_2$ are such that $p_N(Q_i)=Q$, then, by~$(vi)$ we have
	%\[\frac 1{C_1}\mu_N(Q_1)\leq\frac 1{C_1}\mu_N(Q)\leq\mu_N(Q_2), \]
	%which implies that $\mu_N(Q_1)\leq C_1\mu_N(Q_2)$. 
	$\mu_N(Q_1)\leq \mu_N(Q)\leq C_1\mu_N(Q_2)$.
	Hence
	\[\frac{\mu_N(Q)}{\mu_N(Q_1)}=1+\sum_{\substack{Q'\in s_N(Q)\\Q'\neq Q_1}} \frac{\mu_N(Q')}{\mu_N(Q_1)}\geq1+\frac{\# s_N(Q) -1}{C_1}\geq 1+\frac 1{C_1},\]
	which gives us%~$(vii)$ and also 
	\begin{equation}\label{stima figli da sotto}
		\mu_N(Q)\geq\Bigl(1+\frac 1{C_1}\Bigr)\mu_N(Q'),\qquad Q'\in s_N(Q).
	\end{equation}
	
	In short, we say that $N$ admits a Christ decomposition, which from now on will be intended to be the family of sets $\mathcal{Q}$ prescribed by Theorem \ref{christ}. %, (hence also satisfying \eqref{VC}, \eqref{BC} and \eqref{stima figli da sotto}).
	We refer to these sets as \textit{Christ  cubes} and we say that $Q\in\mathcal Q_k$ has \textit{generation} $k\in\mathbb Z$. We will compare this result with the dyadic decomposition we provide in Theorem~\ref{thm dyadic partitions}.

	\begin{defn}\label{ammiss}
		Fix $\gamma\geq 5$, %$\lambda>e^{5-\gamma}/\delta$ (\ML{substitute with $\lambda>1/\delta$}) 
		and $\lambda>e^3/\delta$. We say that the cylinder $P=P_{r,Q}(a)$ is an admissible cylinder if $Q$ is a Christ cube of $N$ of generation $k$, for some $k\in\zz$, and one of the following holds:
		\begin{enumerate}
			\item $r> e$ and $ar^2\leq \delta^k\leq \lambda ar^\gamma$, and in this case we say that $P$ is a \textit{large} admissible cylinder;
			\item $1<r\leq e$ and $ae^2\log r\leq \delta^k\leq \lambda ae^2\log r$, and in this case we say that $P$ is a \textit{small} admissible cylinder.
		\end{enumerate}
	\end{defn}
	We now provide a canonical way to partition an admissible cylinder as the disjoint union of smaller admissible cylinders of comparable measure.
	
	\begin{defn}\label{def sons}
		Given an admissible cylinder $P=P_{r,Q}(a)$, we define the associated cylinders
		\begin{equation*}
			P^\vee =P_{\sqrt{r},Q}\Bigr(\frac a{\sqrt{r}}\Bigr), \ \ \ P^\wedge=P_{\sqrt{r},Q}(a\sqrt{r}).
		\end{equation*}
		Then, we define the set $s(P)$ of the \textit{sons} of $P$ as follows:
		\begin{enumerate}[label=$(\roman*)$]
			\item $s(P)=\{P^\vee,P^\wedge\}$, if $P^\vee$ and $P^\wedge$ are simultaneously admissible;
			\item $s(P)=\{P_{r,Q'}(a): \ Q'\in s_N(Q)\}$, otherwise.
		\end{enumerate}
	\end{defn}
	\begin{prop}\label{prop sons}
		Let $Z\in \mathcal{Z}$, $\mu\in \mathcal{F}_Z$ and $P$ be an admissible cylinder. Every $P'\in s(P)$ is admissible and 
		\begin{equation}\label{stimafigli}
			\Bigl(1+\frac 1{C_1}\Bigr)\mu(P')\leq \mu(P)\leq C_1 \mu(P').
		\end{equation}
	\end{prop}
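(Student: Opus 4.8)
The plan is to separate the two alternatives in Definition~\ref{def sons} and, within each, to dispatch first the measure estimate~\eqref{stimafigli} and then the admissibility of the sons.

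For the measure estimate everything reduces to the formula $\mu(P_{r,E}(a))=2\mu_N(E)\log r$ from Proposition~\ref{prop-cyl}(vi). In case~$(i)$, where $s(P)=\{P^\vee,P^\wedge\}$ with $P^\vee=P_{\sqrt r,Q}(a/\sqrt r)$ and $P^\wedge=P_{\sqrt r,Q}(a\sqrt r)$, this formula gives at once $\mu(P^\vee)=\mu(P^\wedge)=2\mu_N(Q)\log\sqrt r=\tfrac12\mu(P)$, and since $C_1\ge 3$ we have $1+1/C_1\le 2\le C_1$, which is~\eqref{stimafigli}. In case~$(ii)$, where $s(P)=\{P_{r,Q'}(a):Q'\in s_N(Q)\}$, the same formula gives $\mu(P)/\mu(P_{r,Q'}(a))=\mu_N(Q)/\mu_N(Q')$; since $Q=p_N(Q')$, the volume control condition in Theorem~\ref{christ}(iv) bounds this ratio above by $C_1$, while~\eqref{stima figli da sotto} bounds it below by $1+1/C_1$. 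So~\eqref{stimafigli} is essentially free in both cases.

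The substantive point is admissibility. Case~$(i)$ is vacuous, since by construction $s(P)=\{P^\vee,P^\wedge\}$ is declared precisely when both are admissible. So one must show that in case~$(ii)$ every $P_{r,Q'}(a)$ with $Q'\in s_N(Q)$ is admissible. If $Q$ has generation $k$ then $Q'$ has generation $k+1$, while $r$ and $a$ are unchanged; hence $P_{r,Q'}(a)$ is tested against the same alternative of Definition~\ref{ammiss} as $P$ (``small'' if $1<r\le e$, ``large'' if $r>e$), and — because $\delta^{k+1}<\delta^{k}$ — its upper threshold inequality is inherited for free from that of $P$. Thus the only thing left to check is the lower threshold, i.e.
\[
\delta^{k+1}\ge ar^2\quad\text{(large case)},\qquad \delta^{k+1}\ge ae^2\log r\quad\text{(small case)},
\]
equivalently $\delta^{k}\ge ar^2/\delta$, resp.\ $\delta^{k}\ge ae^2\log r/\delta$. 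I would obtain this by contraposition from the standing hypothesis of case~$(ii)$, namely that $P^\vee$ and $P^\wedge$ are \emph{not} simultaneously admissible: if instead $\delta^{k}<ar^2/\delta$ (large $P$) or $\delta^{k}<ae^2\log r/\delta$ (small $P$), then both $P^\vee$ and $P^\wedge$ turn out to be admissible, putting us in case~$(i)$ — a contradiction.

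To verify this last implication one runs through the two conditions of Definition~\ref{ammiss} for $P^\vee=P_{\sqrt r,Q}(a/\sqrt r)$ and $P^\wedge=P_{\sqrt r,Q}(a\sqrt r)$, distinguishing whether $\sqrt r>e$ (test as ``large'') or $1<\sqrt r\le e$ (test as ``small''); in the large case this is the split $r>e^2$ versus $e<r\le e^2$, while in the small case $\sqrt r\le e$ always. The lower-threshold inequalities for $P^\vee$ and $P^\wedge$ are immediate, because each relevant lower bound is dominated by $ar^2$ (resp.\ $ae^2\log r$), which is $\le\delta^{k}$ by admissibility of $P$. The upper-threshold inequalities boil down to a short list of elementary estimates in the single variable $r$ — of the shape $r^{(5-\gamma)/2}\le\lambda\delta$ and $r^{(3-\gamma)/2}\le\lambda\delta$ for $r>e^2$; $r^{5/2}\le\tfrac12\lambda\delta e^2\log r$ and $r^{3/2}\le\tfrac12\lambda\delta e^2\log r$ for $e<r\le e^2$; and $2\sqrt r\le\lambda\delta$, $2\le\lambda\delta\sqrt r$ for $1<r\le e$ — each of which holds on the pertinent range precisely because of the standing choices $\gamma\ge 5$ and $\lambda>e^3/\delta$ (so that $\lambda\delta>e^3$), the tightest instance forcing $\lambda\delta>e^3$ being $r$ close to $e^2$. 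The only real obstacle is therefore bookkeeping: correctly deciding, for each of $P^\vee,P^\wedge$ and each size regime of $r$, which alternative of Definition~\ref{ammiss} applies, and then checking that this numerology is exactly strong enough to push all the upper-threshold inequalities through uniformly in $r$.
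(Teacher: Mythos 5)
Your proposal is correct and follows essentially the same route as the paper: the measure estimate is the same one-line application of Proposition~\ref{prop-cyl}~$(vi)$, \eqref{stima figli da sotto} and Theorem~\ref{christ}~$(iv)$, and the admissibility of the sons in case~$(ii)$ rests on exactly the same case split ($1<r\le e$, $e<r\le e^2$, $r>e^2$), the same inheritance of the upper threshold from $\delta^{k+1}<\delta^k$, and the same elementary inequalities driven by $\gamma\ge 5$ and $\lambda\delta>e^3$. The only difference is organizational: you argue by contraposition (failure of the child's lower threshold forces both $P^\vee$ and $P^\wedge$ to be admissible), whereas the paper works directly with the dichotomy $(i)$/$(ii)$ for the failure of simultaneous admissibility; the computations involved are the same up to taking contrapositives.
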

	\begin{proof}
		Let $P=P_{r,Q}(a)$ be an admissible cylinder. We need to analyze separately different cases.
		
		\noindent\textsc{case 1}: $P$ is small admissible. In this case, $P^\vee$ and $P^\wedge$ cannot be large admissible. On the other hand, it is easy to see that they are simultaneously small admissible unless
		\begin{equation*}
			(i) \ \delta^k<\frac{\sqrt{r}}{2}ae^2\log r \quad \text{or} \quad (ii) \ \delta^k > \frac{1}{2\sqrt{r}}\lambda ae^2\log r.
		\end{equation*}
		But $(i)$ cannot hold, since it contradicts the small admissibility of $P$. It remains to check that if $(ii)$ holds, then $P_{r,Q'}(a)$ is small admissible for some (hence, for any) $Q'\in s_N(Q)$, i.e., that
		\begin{equation*}
			ae^2\log r\leq \delta^{k+1}\leq \lambda ae^2\log r.
		\end{equation*}
		The inequality on the right is implied by $P$ being small admissible (since $\delta^{k+1}<\delta^k$). For the other, one can use $(ii)$, $\delta\lambda>e^3$ and $r\leq e$ to see that
		\begin{equation*}
			\delta^{k+1} > \frac{\delta\lambda}{2\sqrt{r}}ae^2\log r > \frac{1}{2}ae^{9/2}\log r> ae^2\log r.
		\end{equation*}
		
		\noindent\textsc{case 2}: $P$ large admissible. In this case we need to distinguish two sub-cases.
		
		\textsc{case 2.1}: $r>e^2$. In this case, $P^\vee$ and $P^\wedge$ cannot be small admissible. On the other hand, it is easy to see that they are simultaneously large admissible unless
		\begin{equation*}
			(i)' \ \delta^k<ar^{3/2} \quad \text{or} \quad (ii)' \ \delta^k > \lambda a r^{(\gamma-1)/2}.
		\end{equation*}
		But $(i)'$ cannot hold, since it contradicts the large admissibility of $P$. It remains to check that if $(ii)'$ holds, then $P_{r,Q'}(a)$ is large admissible for some (hence, for any) $Q'\in s_N(Q)$, i.e., that
		\begin{equation}\label{eq: prop ammissibilità padri}
			ar^2\leq \delta^{k+1}\leq \lambda ar^\gamma.
		\end{equation}
		The inequality on the right is implied by $P$ being large admissible (since $\delta^{k+1}<\delta^k$). For the other, one can use $(ii)'$, $\delta\lambda>e^3$ and $\gamma>5$ to see that
		\begin{equation*}
			\delta^{k+1} > \delta\lambda a r^{(\gamma-1)/2}>e^3ar^2>ar^2.
		\end{equation*}
		
		\textsc{case 2.2}: $e<r\leq e^2$. In this case, $P^\vee$ and $P^\wedge$ cannot be large admissible, and we know from \textsc{case 1} that they are simultaneously small admissible unless either $(i)$ or $(ii)$ hold. But once again, $(i)$ cannot hold. Indeed, since $\sup_{(e,e^2]}r^{-3/2}\log r=e^{-3/2}$, we have

		%One can see that observing that the function $r\mapsto r^{-3/2}\log r$ is monotone decreasing for $r\in (e,e^2]$, so that
		\begin{equation*}
			\frac{\sqrt{r}}{2}ae^2\log r= \frac{e^2}{2}\bigl(r^{-3/2}\log r\bigr)ar^2\leq \frac{\sqrt{e}}{2}ar^2\leq ar^2,
		\end{equation*}
		which makes it clear that $(i)$ contradicts the large admissibility of $P$. It remains to check that if $(ii)$ holds, then $P_{r,Q'}(a)$ is large admissible for some (hence, for any) $Q'\in s_N(Q)$, i.e., that \eqref{eq: prop ammissibilità padri} holds true.
		
		The inequality on the right of \eqref{eq: prop ammissibilità padri} is implied by $P$ being large admissible (since $\delta^{k+1}<\delta^k$). For the left inequality, one can use $(ii)$, $\delta\lambda>e^3$ and the fact that $\inf_{(e,e^2]}r^{-5/2}\log r=2e^{-5}$ to get
		
		%the function $r\mapsto r^{-5/2}\log r$ is monotone decreasing for $r\in (e,e^2)$ to get
		\begin{equation*}
			\delta^{k+1} > \frac{\delta\lambda}{2\sqrt{r}}ae^2\log r > \frac{1}{2\sqrt{r}}ae^5\log r= \frac{e^5}{2}\bigl(r^{-5/2}\log r\bigr) ar^2\geq ar^2.
		\end{equation*}
		Finally,~\eqref{stimafigli} follows by $(vi)$ in Proposition~\ref{prop-cyl},~\eqref{stima figli da sotto}, Theorem~\ref{christ}~$(iv)$, and the fact that $C_1\geq 3$.
	\end{proof}

	\begin{defn}\label{def parent}  
		Given an admissible cylinder $P=P_{r,Q}(a)$, we define the associated cylinders
		\begin{equation*}
			p^\downarrow(P)=P_{r^3,Q}\left( {\frac a {r^2}}\right), \quad p^\uparrow(P)=P_{r^2,Q}(ar), \quad p^\leftrightarrow(P)=P_{r,p_N(Q)}(a).
		\end{equation*}
	\end{defn}

	\begin{prop}\label{ammissibilita padri}
		Let $P=P_{r,Q}(a)$ be an admissible large cylinder, $r>e$, $Q\in\mathcal Q_k$, $a\in A$. Then,
		\begin{enumerate}
			\item if $ar^2\leq \delta^k\leq \delta\lambda ar^\gamma$, then $p^\leftrightarrow(P)$ is a large admissible cylinder;
			\item  if $ \delta\lambda ar^\gamma< \delta^k\leq \lambda ar^\gamma$, then $p^\downarrow(P)$, $p^\downarrow(P)\setminus P$, $p^\uparrow(P)$, $p^\uparrow(P)\setminus P$ are large admissible cylinders.
		\end{enumerate}
		Furthermore, whenever $\tilde P\in\{p^\leftrightarrow(P),p^\downarrow(P), p^\uparrow(P)\}$,
		\[\Bigl(1+\frac 1{C_1}\Bigr)\mu(P)\leq\mu(\tilde P)\leq C_1\mu(P).\]
	\end{prop}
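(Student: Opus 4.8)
The plan is to verify each asserted admissibility by substituting the parameters of the associated cylinder into Definition~\ref{ammiss}(1), and to obtain the volume comparisons from the formula $\mu(P_{r,E}(a))=2\mu_N(E)\log r$ of Proposition~\ref{prop-cyl}(vi) combined with the Christ cube estimates. The only arithmetic inputs are $r>e$, $\delta\in(0,1)$, $\gamma\geq 5$, $C_1\geq 3$, and $\delta\lambda>e^3$ (the content of $\lambda>e^3/\delta$); in particular each radius parameter that occurs below ($r$, $r^2$, $r^3$) automatically exceeds $e$.

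For item (1): the base of $p^\leftrightarrow(P)=P_{r,p_N(Q)}(a)$ is the Christ cube $p_N(Q)\in\mathcal Q_{k-1}$, so I must check $ar^2\leq\delta^{k-1}\leq\lambda ar^\gamma$. Since $\delta<1$ we have $\delta^{k-1}=\delta^{k}/\delta>\delta^{k}\geq ar^2$, and dividing the hypothesis $\delta^{k}\leq\delta\lambda ar^\gamma$ by $\delta$ gives $\delta^{k-1}\leq\lambda ar^\gamma$; hence $p^\leftrightarrow(P)$ is large admissible.

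For item (2), assume $\delta\lambda ar^\gamma<\delta^{k}\leq\lambda ar^\gamma$, so in particular $\delta^{k}>e^3ar^\gamma$. For $p^\downarrow(P)=P_{r^3,Q}(a/r^2)$ the conditions of Definition~\ref{ammiss}(1) become $ar^4\leq\delta^{k}\leq\lambda ar^{3\gamma-2}$: the left one holds because $\delta^{k}>e^3ar^\gamma\geq e^3ar^4>ar^4$ (as $\gamma\geq 4$, $r>1$), and the right one because $\delta^{k}\leq\lambda ar^\gamma\leq\lambda ar^{3\gamma-2}$ (as $3\gamma-2\geq\gamma$). For $p^\uparrow(P)=P_{r^2,Q}(ar)$ they become $ar^5\leq\delta^{k}\leq\lambda ar^{2\gamma+1}$, proved identically, now using $\gamma\geq 5$ for the lower bound. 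For the set differences I compare the $t$-intervals: $p^\downarrow(P)$ has $t\in U_{r^3}(a/r^2)=(\log(a/r^5),\log(ar))$ and $P$ has $t\in U_r(a)=(\log(a/r),\log(ar))$, so $P\subset p^\downarrow(P)$ and, discarding the $\mu$-null slice at $t=\log(a/r)$, one has $p^\downarrow(P)\setminus P=P_{r^2,Q}(a/r^3)$; similarly $p^\uparrow(P)\setminus P=P_{r,Q}(ar^2)$ up to a $\mu$-null set. Substituting as above shows $P_{r^2,Q}(a/r^3)$ and $P_{r,Q}(ar^2)$ are large admissible under the hypothesis of (2) (this time requiring only $\gamma\geq 3$, resp.\ $\gamma\geq 4$).

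For the volume comparison, Proposition~\ref{prop-cyl}(vi) gives $\mu(p^\downarrow(P))=2\mu_N(Q)\log(r^3)=3\mu(P)$ and $\mu(p^\uparrow(P))=2\mu_N(Q)\log(r^2)=2\mu(P)$, and since $1+1/C_1\leq 2\leq 3\leq C_1$ (because $C_1\geq 3$) both lie in $[(1+1/C_1)\mu(P),\,C_1\mu(P)]$. Likewise $\mu(p^\leftrightarrow(P))=2\mu_N(p_N(Q))\log r$; since $Q\in s_N(p_N(Q))$, inequality~\eqref{stima figli da sotto} yields $\mu_N(p_N(Q))\geq(1+1/C_1)\mu_N(Q)$ and Theorem~\ref{christ}(iv) yields $\mu_N(p_N(Q))\leq C_1\mu_N(Q)$, so multiplying by $2\log r$ gives the bound for $p^\leftrightarrow(P)$. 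I do not expect a genuine obstacle: the whole argument is exponent bookkeeping, and the only delicate point is the open-versus-closed interval mismatch in the set differences, which is harmless since it affects only $\mu$-null sets.
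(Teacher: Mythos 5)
Your proof is correct and follows essentially the same route as the paper's: direct verification of the large-admissibility inequalities for each associated cylinder (with the same exponent bookkeeping using $\delta\lambda>e^3$ and $\gamma\geq 5$), and the measure comparison via $\mu(P_{r,E}(a))=2\mu_N(E)\log r$ together with \eqref{stima figli da sotto} and Theorem~\ref{christ}~$(iv)$. Your remark that $p^\downarrow(P)\setminus P$ and $p^\uparrow(P)\setminus P$ coincide with the stated cylinders only up to a $\mu$-null slice at one endpoint of the $t$-interval is a small refinement the paper glosses over, and it is handled correctly.
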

	
	\begin{proof}
		Let $P=P_{r,Q}(a)$ be an admissible large cylinder, $Q\in\mathcal Q_k$.
		If $ar^2\leq \delta^k\leq \delta\lambda ar^\gamma$, then $p^\leftrightarrow(P)=P_{r,p_N(Q)}(a)$, with $p_N(Q)\in\mathcal Q_{k-1}$, is large admissible since
		\[ar^2\leq \delta^k<\delta^{k-1}\leq \lambda ar^\gamma.\]
		This proves~(1). If $ \delta\lambda ar^\gamma< \delta^k\leq \lambda ar^\gamma$, then the fact that $p^\downarrow(P)$ and $p^\uparrow(P)$ are large admissible follows from the fact that $\delta\lambda>e^3$ and $\gamma\geq 5$. Indeed, we have
		\[ar^4<\delta\lambda ar^\gamma<\delta^k\leq\lambda ar^\gamma<\lambda a r^{3\gamma-2},\quad ar^5<\delta\lambda ar^\gamma<\delta^k\leq\lambda ar^\gamma<\lambda a r^{2\gamma+1}.\]
		Note that 
		\[p^\downarrow(P)\setminus P=P_{r^2,Q}\Bigl(\frac a{r^3}\Bigr),\qquad     p^\uparrow(P)\setminus P=P_{r,Q}(ar^2), \]
		and, as above, they are large admissible because, by $\delta\lambda>e^3$ and $\gamma\geq 5$
		\[ar<\delta\lambda ar^\gamma<\delta^k\leq\lambda ar^\gamma<\lambda a r^{2\gamma-3},\quad ar^5<\delta\lambda ar^\gamma<\delta^k\leq\lambda ar^\gamma<\lambda a r^{\gamma+2}.\]
		Hence, we proved~(2). The statement on the measures follows by Proposition~\ref{prop-cyl}~$(vi)$, \eqref{stima figli da sotto}, and Theorem~\ref{christ}~$(iv)$. Indeed,
		\[\Bigl(1+\frac 1{C_1}\Bigr)\mu(P)\leq\mu(p^\leftrightarrow(P))=2\log r\mu_N(p_N(Q))\leq C_1 \mu(P),\] \[\mu(p^\downarrow(P))=3\mu(P),\quad \mu(p^\uparrow(P))=2\mu(P).\]
		
	\end{proof}

	\subsection{Dyadic and \CZ decompositions}
	We start by constructing a dyadic decomposition of the measure space $(G,\mu)$ made of admissible sets.

	\begin{thm}\label{thm dyadic partitions}
		For any $Z\in \mathcal{Z}$ and any $\mu\in\mathcal{F}_Z$, there exists a family $\mathcal{D}^Z=\{P\in\mathcal{D}^Z_k\colon k\in\mathbb Z\}$ such that
		for each $k\in\zz$, $\mathcal{D}^Z_k$ consists of pairwise disjoint admissible cylinders enjoying the following properties:
		\begin{enumerate}
			\item[$(i)$] $\displaystyle \mu\Bigl(G\setminus\bigsqcup_{P\in\mathcal{D}^Z_k} P\Bigr)=0$, for every $k\in \mathbb{Z}$;
			\item[$(ii)$] for every $P\in\mathcal{D}^Z_k$, there exists a unique cylinder $p(P)\in\mathcal{D}^Z_{k-1}$ such that $P\subset p(P)$, while $P\cap P'=\emptyset$ for any other $P'\in \mathcal{D}^Z_{k-1}$
			;
			\item[$(iii)$] for almost every $x\in G$ there is a, necessarily unique, cylinder  $P_k^x \in\mathcal{D}^Z_k$ which contains $x$  for any $k\in\zz$ and for any such $x$ 
			\[\lim_{k\rightarrow -\infty}\mu(P_k^x)=+\infty,\qquad\lim_{k\rightarrow +\infty}\mu(P_k^x)=0 ;\]
			\item[$(iv)$] 	for every $P\in\mathcal{D}^Z$, $\mu(p(P))\leq C_1\mu(P)$ and $\#\{P'\in\mathcal{D}^Z\colon P=p(P')\} \leq C_1$.
		\end{enumerate}
	\end{thm}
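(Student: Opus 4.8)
The plan is to produce one partition of $G$ into admissible cylinders, the zeroth generation $\mathcal D^Z_0$, and to generate all the others from it: for $k>0$ by iterating the sons operation of Definition~\ref{def sons}, and for $k<0$ by iterating its inverse, merging each cylinder with its dyadic siblings. In the coordinates $(n,t)\mapsto(n,1)\exp(tZ)$ a cylinder $P_{r,Q}(a)$ is the product $Q\times U_r(a)$, where $U_r(a)$ is the open interval of centre $\log a$ and half-length $\log r$, and $\mu(P_{r,Q}(a))=2\mu_N(Q)\log r$ by Proposition~\ref{prop-cyl}$(vi)$. Since $\mathcal Q_m$ tiles $N$ up to a $\mu_N$-null set for every $m\in\mathbb Z$, it suffices to choose for each $m$ a pair $(r_m,a_m)$ such that $P_{r_m,Q}(a_m)$ is admissible whenever $Q\in\mathcal Q_m$ and such that the intervals $\{U_{r_m}(a_m)\}_{m\in\mathbb Z}$ tile $\mathbb R$ up to a countable set; then $\mathcal D^Z_0:=\{P_{r_m,Q}(a_m)\colon m\in\mathbb Z,\ Q\in\mathcal Q_m\}$ is the desired partition. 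I would take the $U_{r_m}(a_m)$ to be the consecutive intervals cut out by the points $m\log\delta+c$, $m\in\mathbb Z$, so that every $r_m$ equals $\delta^{-1/2}$ (a large cylinder if $\log(1/\delta)>2$, a small one otherwise); the admissibility of $P_{r_m,Q}(a_m)$ then reduces to a pair of inequalities on the offset $c$, which are compatible precisely because $\gamma\ge5$ and $\lambda>e^3/\delta$.

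\textbf{Descending generations and the easy items.}
For $k\ge1$ set recursively $\mathcal D^Z_k:=\bigcup_{P\in\mathcal D^Z_{k-1}}s(P)$. By Definition~\ref{def sons} and Proposition~\ref{prop-cyl}$(iv)$, $s(P)$ partitions $P$ up to a $\mu$-null set — in the first case of Definition~\ref{def sons} because $U_r(a)=U_{\sqrt r}(a/\sqrt r)\sqcup\{\log a\}\sqcup U_{\sqrt r}(a\sqrt r)$, in the second because $\bigsqcup_{Q'\in s_N(Q)}Q'=Q$ up to a $\mu_N$-null set — and every member of $s(P)$ is admissible by Proposition~\ref{prop sons}. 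An induction on $k$ then gives $(i)$, the (exact) pairwise disjointness inside each $\mathcal D^Z_k$, and $(ii)$ for $k\ge1$ with $p(P')$ the unique element of $\mathcal D^Z_{k-1}$ containing $P'$; the bounds $\mu(p(P'))\le C_1\mu(P')$ and $\#\{P'\colon p(P')=P\}=\# s(P)\le C_1$ in $(iv)$ follow from Proposition~\ref{prop sons}, Theorem~\ref{christ}$(iv)$, and $C_1\ge3$. Finally \eqref{stimafigli} gives $\mu(P')\le\frac{C_1}{C_1+1}\mu(P)$ for $P'\in s(P)$, whence $\mu(P^x_k)\le\bigl(\tfrac{C_1}{C_1+1}\bigr)^k\mu(P^x_0)\to0$ as $k\to+\infty$, for $\mu$-a.e.\ $x$.

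\textbf{Ascending generations and the main obstacle.}
For $k\le-1$ I need $\mathcal D^Z_k$ to be a partition of $G$ into admissible cylinders whose sons-refinement is $\mathcal D^Z_{k+1}$, and it suffices to produce such a partition one step up from $\mathcal D^Z_0$ and iterate. An admissible cylinder $P=P_{r,Q}(a)$ is a son of at most three admissible cylinders — $P_{r^2,Q}(ar)$ and $P_{r^2,Q}(a/r)$ (when $P$ is, respectively, its $\vee$-son or its $\wedge$-son, cf.\ Definition~\ref{def sons}) and $P_{r,p_N(Q)}(a)$ (when $P$ comes from a cube-splitting) — and I would single out among them, by a deterministic rule depending on which of the three is admissible and on the position of $U_r(a)$ inside the admissibility window, a dyadic parent $\widehat P$, and set $\mathcal D^Z_k:=\{\widehat P\colon P\in\mathcal D^Z_{k+1}\}$. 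One has to verify that $\widehat P$ is admissible (an argument of the same kind as Proposition~\ref{ammissibilita padri}), that $P\in s(\widehat P)$ with $\# s(\widehat P)\le C_1$ and $\mu(\widehat P)\le C_1\mu(P)$ (Proposition~\ref{prop sons} applied to $\widehat P$), and — this is the crux — that $P\mapsto\widehat P$ is coherent, i.e.\ that the preimage of each $P_0$ is exactly $s(P_0)$, so that $\mathcal D^Z_k$ is indeed a partition with the prescribed sons-refinement. This last point — equivalently, that the regular structure of $\mathcal D^Z_0$ is inherited at every coarser scale — is the part I expect to cost the most effort: it is a bookkeeping case analysis tracking, for each cylinder, the generation of its Christ base against the location of its vertical interval, in the spirit of the proof of Proposition~\ref{prop sons}. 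Once $\mathcal D^Z_k$ is in place for $k<0$, items $(i)$, $(ii)$, $(iv)$ for such $k$ hold by construction; the left inequality in \eqref{stimafigli} gives $\mu(P^x_{k-1})=\mu(p(P^x_k))\ge(1+\tfrac{1}{C_1})\mu(P^x_k)$ and hence $\mu(P^x_k)\to+\infty$ as $k\to-\infty$; and the $\mu$-a.e.\ existence of the nested tower $(P^x_k)_{k\in\mathbb Z}$ in $(iii)$ follows by discarding the null sets $G\setminus\bigsqcup_{P\in\mathcal D^Z_k}P$, $k\in\mathbb Z$, and using disjointness.
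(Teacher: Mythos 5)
Your generation zero, the sons-refinement for $k\ge 1$, and the measure/limit arguments for items (i), (iii), (iv) are fine; but the negative generations are where the entire difficulty of the theorem sits, and there your plan has a genuine gap, not a deferred bookkeeping step. With the generation zero you propose (vertical intervals of constant length $\log(1/\delta)$ tiling $\mathbb R$, base cubes of Christ generation $m$ over the $m$-th interval), a vertical parent $P_{r^2,Q}(ar^{\pm 1})$ can never be chosen coherently: its other son $P_{r,Q}(ar^{\pm 2})$ has the same base cube $Q$ but sits over the adjacent slab, whose members of your partition have base cubes of a different Christ generation, so that son is never an element of the current family. Hence at every step up, every cylinder is forced to take the cube-parent $P_{r,p_N(Q)}(a)$ (and even this needs extra constraints on your offset $c$ which you have not imposed, since coherence requires case $(ii)$ of Definition \ref{def sons} to apply to the cube-parent, i.e.\ one of its halves must fail admissibility). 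Now iterating the cube-parent keeps $a$ and $r$ fixed while the cube scale grows by a factor $\delta^{-1}$ per step, so the upper admissibility constraint $\delta^{k}\le \lambda a r^{\gamma}$ fails after at most about $\gamma/2-1+\log_{1/\delta}\lambda$ steps; this is precisely the dichotomy behind Proposition \ref{ammissibilita padri}, which says that beyond this threshold one must switch to $p^{\uparrow}$ or $p^{\downarrow}$ --- exactly the vertical parents that are never available in your scheme, because under uniform cube-merging the cube generations of adjacent slabs stay offset by one forever. So the construction halts after finitely many negative generations: the ``coherence'' you flag as the costly part is not a case analysis in the spirit of Proposition \ref{prop sons}, it is an obstruction to the design itself, and repairing it would require either a staggered, non-uniform generation zero or parents outside your three canonical candidates (with degraded constants), i.e.\ a genuinely different construction.

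The paper avoids upward merging of a globally prescribed partition altogether. It grows an increasing exhausting tower $P_{k+1}=p(P_k)$ from a single admissible cylinder, taking $p^{\leftrightarrow}$ while admissible and alternating $p^{\uparrow}$ and $p^{\downarrow}$ when it is not (this is what Proposition \ref{ammissibilita padri} is for), and then defines the generation $-k$ as the siblings of $P_k$ together with the increments $P_{\ell+1}\setminus P_\ell$, $\ell\ge k$, subdivided only downward by iterated sons. In that construction nothing ever has to be coarsened coherently: every set of negative generation arises as a sibling of a tower element or by splitting, so nestedness and the bounds in (iv) follow directly from Propositions \ref{prop sons} and \ref{ammissibilita padri}. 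If you want to keep your ``fix a zeroth generation and merge upward'' strategy, you must first redesign generation zero (or allow non-canonical parents) so that vertical merges become available before cube-merging exhausts admissibility; as written, the key step of your proof cannot be completed.
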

	
	Observe that~$(iv)$ above essentially extends the volume control condition ($(iv)$ of Theorem~\ref{christ}) from the group $N$ to the group $G$. However, while in doubling metric spaces the volume control condition is a trivial consequence of the eccentricity condition ($(iii)$ of Theorem~\ref{christ}),
	this is not the case, in general, in nondoubling metric spaces and, a fortiori, in our case where the eccentricity condition, and more in general any metric condition, is not even available at all. Hence, having the volume control condition $(iv)$ in Theorem \ref{thm dyadic partitions} is noteworthy and it has to be proved from scratch. Our construction is inspired by the one developed for $ax+b$ groups equipped with a right Haar measure in \cite{liuvallarinoyang}.

	\begin{proof}
		Fix a large admissible cylinder $P_0=P_{r_0,Q_0}(1)$ and let $P_{k+1}:=p(P_{k})$. Here we are defining $p(P_k)$ as follows: $p(P_k)=p^\leftrightarrow(P_k)$ whenever admissible, then for the smallest $k$ such that the latter is not admissible we set $p(P_k)=p^\uparrow(P_k)$, for the next value of $k$ such that $p^\leftrightarrow(P_k)$ is not admissible we set $p(P_k)=p^\downarrow(P_k)$, and then we keep on alternating $p^\uparrow$ and $p^\downarrow$ in this way.
		
		We set $P_k=P_{r_k,Q_k}(a_k)$ for some $r_k>e$, $j(k)\in\mathbb Z$, $Q_k\in\mathcal{Q}_{j(k)}$, $a_k\in A$. 
		By Proposition~\ref{ammissibilita padri}, it can neither happen that $p^n(P_k)=p^\leftrightarrow(p^{n-1}(P_{k}))$ for all $n\in\mathbb N$ nor that $p^n(P_k)\neq p^\leftrightarrow(p^{n-1}(P_{k}))$ for all $n\in\mathbb N$. Hence there is an alternation of all the three choices that makes $r_k\to +\infty$ and $j(k)\to -\infty$ as $k\to +\infty$, so that 
		\[G=\bigcup_{k\in\mathbb N}P_k.\]
		
		Now we introduce the following notation: if $P=P_{r,Q}(a)$, where $Q\in\mathcal Q_j$, then the family of its \textit{siblings} is
		\[\mathcal{S}(P)=\{P_{r,Q'}(a): Q' \in\mathcal Q_j\}.\]
		For every $k\in \mathbb N$, we define the families
		$\mathcal S_k:=\mathcal S(P_k)$ and
		\begin{equation*}
			\widetilde{\mathcal S}_k:=\begin{dcases}
				\mathcal{S}(P_{k+1}\setminus P_k),&\text{if }P_{k+1}\in\{p^\uparrow(P_k),p^\downarrow(P_k)\},\\
				\emptyset,&\text{if }P_{k+1}=p^\leftrightarrow(P_k).
			\end{dcases}
		\end{equation*}
		By Proposition~\ref{ammissibilita padri}, all the cylinders in $\mathcal S_k$ and $\widetilde{\mathcal S}_k$ are large admissible. Furthermore, the sets in $\mathcal S_k\cup (\bigcup_{\ell\geq k}\widetilde{\mathcal S}_\ell)$
		are disjoint and their union has full measure in $G$ with respect to $\mu$. 
		
		Fix $k\in\mathbb{N}$ and take $\ell>k$ such that $	\widetilde{\mathcal S}_\ell\neq\emptyset$. For every $P\in	\widetilde{\mathcal S}_\ell$, we iterate Definition~\ref{def sons}, by putting $s^1(P)=s(P)$ and $s^{m+1}(P):=s(s^{m}(P))$, for every $m\in\mathbb{N}$, $m\geq 1$, and we put
		\[\widetilde{\mathcal S}_\ell^m:=\{P'\in s^m(P)\colon P\in \widetilde{\mathcal S}_\ell\}.\]
		Clearly, 
		the disjoint union of the sets in $\widetilde{\mathcal{S}}_\ell^m$
		has full measure in the union of the sets in $\widetilde{\mathcal S}_\ell$ with respect to $\mu$.
		We define
		\[\mathcal D^Z_{-k}:=\mathcal S_k\cup \widetilde{\mathcal S}_k\cup\Bigl(\bigcup_{\ell>k}\widetilde{\mathcal S}_\ell^{\ell-k}\Bigr).\]
		By construction, the sets in $\mathcal D^Z_{-k}$ are disjoint and their union has full measure in $G$ with respect to $\mu$. 
		For $P\in \mathcal S_k\subset \mathcal D^Z_{-k}$, we define $p(P)$ to be $p^\leftrightarrow(P), p^\uparrow(P)$ or $p^\downarrow(P)$, when $p(P_k)$ is given by $p^\leftrightarrow(P_k), p^\uparrow(P_k)$ or  $p^\downarrow(P_k)$, respectively. Observe that $p(P)\in \mathcal{S}_{k+1}\subset\mathcal D^Z_{-k-1}$.
		When $P\in \widetilde{\mathcal S}_k$, there is $P'\in \mathcal S_k$ such that $P=p(P')\setminus P'$ and then we put $p(P):=p(P')\in \mathcal S_{k+1}\subset\mathcal D^Z_{-k-1}$. In both cases $\mu(p(P))\leq C_1\mu(P)$ by Proposition~\ref{ammissibilita padri}.
		Finally, if $P\in\widetilde{\mathcal{S}}_\ell^{\ell-k}$ for some $\ell>k$, then we write $p(P)$ for the unique set in $\widetilde{\mathcal{S}}_\ell^{\ell-k-1}\subset\mathcal D^Z_{-k-1}$ such that $P\in s(p(P))$. The control of the measures follows by Proposition~\ref{prop sons}. The properties~$(i)$, $(ii)$ and $(iv)$ have been proved to be satisfied by sets in $\{\mathcal D^Z_{-k}\}_{k\in\mathbb N}$, since the last inequality in~$(iv)$ follows by the control of the measure, as already observed. Coming to the ``positive generations'', we define inductively
		\[\mathcal D^Z_k:=\{P'\in s(P)\colon P\in\mathcal D^Z_{k-1}\},\qquad k\in\mathbb{N}.\]
		For every $P'\in\mathcal D^Z_{k}$, we put $p(P')=P$ where $P\in\mathcal D^Z_{k-1}$ is such that $P'\in s(P)$.
		Note that $(i)$, $(ii)$ and $(iv)$ follow by the construction above and by Proposition~\ref{prop sons}. 
		
		We showed that 
		\[\mu(V_k)=0,\qquad V_k:=G\setminus\bigcup_{P\in\mathcal D^Z_k}P,\qquad k\in\mathbb Z. \]
		Furthermore, $V_k\subset V_{k+1}$ and then 
		\[\mu(V)=\lim_{k\to+\infty}\mu(V_k)=0,\qquad V:=\bigcup_{k\in\mathbb Z}V_k.\]
		It remains to prove $(iii)$. Let $x\in G\setminus V$. For every $k\in\mathbb Z$, there exists $P_k^x\in \mathcal D^Z_k$ such that $x\in P_k^x$.
		Clearly, $\{P_k^x\}_{k\in\mathbb N}$ is a sequence of elements of $\mathcal D^Z$ such that $p(P_k^x)=P_{k-1}^x$, then by Propositions~\ref{prop sons} and~\ref{ammissibilita padri}, we have that
		\[\mu(P_{k-1}^x)\geq\Bigl(1+\frac 1{C_1}\Bigr)\mu(P_k^x).\]
		Hence, since $1+\frac 1{C_1}>1$, by iterating the inequality we prove $(iii)$.
	\end{proof}

	A natural consequence of Theorem \ref{thm dyadic partitions} is that $L^1(\mu)$ admits a CZ decomposition with respect to the family $\mathcal{D}^Z$. The proof follows classical lines, but we include it here for the convenience of the reader.
	
	\begin{thm}\label{CZdec}
		For any $Z\in\mathcal{Z}$ and any $\mu\in\mathcal{F}_Z$, the space $L^1(\mu)$ admits a CZ decomposition with respect to the family $\mathcal{D}^Z$. Namely, for every $f\in L^1(\mu)$ and $\alpha>0$ there exists a family of disjoint sets $\mathcal{E}(f,\alpha)=\{P_j\}$, with $P_j\in \mathcal D^Z$ and functions $g,b_j$ such that  $f=g+ {\sum_j} b_j$ and
		\begin{enumerate}[label={\rm (\alph*)}]
			\item $|g| \le C_1 \alpha$ $\mu$-almost everywhere;
			\item  $b_j=0$ on $G \setminus P_j$ and $\displaystyle\int_{P_j} \ b_j \ d\mu = 0$;
			% \item  $\|b_j\|_1 \le 2C_1\alpha \mu(P_j)$ and $\displaystyle\int_{P_j} \ b_j \ d\mu = 0;$
			% \item $\displaystyle\sum_j \mu(P_j) \le \frac{1}{\alpha}\|f\|_1$.
			\item $\displaystyle\sum_j \|b_j\|_1 \le 2C_1\|f\|_1$.
		\end{enumerate}
		Conversely, any set $P\in\mathcal{D}^Z$ belongs to $\mathcal{E}(f,\alpha)$ for some $f\in L^1(\mu)$ and some $\alpha>0$.
	\end{thm}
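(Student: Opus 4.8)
The plan is to run the standard dyadic Calder\'on--Zygmund stopping-time argument, with the family $\mathcal D^Z$ furnished by Theorem~\ref{thm dyadic partitions} playing the role of the Euclidean dyadic cubes. Fix $f\in L^1(\mu)$ and $\alpha>0$, and call a cylinder $P\in\mathcal D^Z$ \emph{$\alpha$-heavy} if $\frac{1}{\mu(P)}\int_P|f|\,\de\mu>\alpha$. For $\mu$-a.e.\ $x$ the nested chain $\{P^x_k\}_{k\in\ZZ}$ of Theorem~\ref{thm dyadic partitions}~$(iii)$ is defined, and since $\mu(P^x_k)\to+\infty$ as $k\to-\infty$ (equivalently, iterating the lower bounds of Propositions~\ref{prop sons} and~\ref{ammissibilita padri}, the iterated parents of any fixed cylinder have $\mu$-measure tending to $\infty$) the averages $\frac{1}{\mu(P^x_k)}\int_{P^x_k}|f|\,\de\mu$ tend to $0$; hence every $\alpha$-heavy cylinder is contained in a maximal $\alpha$-heavy one. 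I would then let $\mathcal E(f,\alpha)=\{P_j\}$ be the collection of all maximal $\alpha$-heavy cylinders. Since by Theorem~\ref{thm dyadic partitions}~$(ii)$ two cylinders of $\mathcal D^Z$ are nested or disjoint, maximality makes the $P_j$ pairwise disjoint. Put $E:=\bigcup_jP_j$, $c_j:=\frac{1}{\mu(P_j)}\int_{P_j}f\,\de\mu$, and set
\[
b_j:=(f-c_j)\,\mathbf{1}_{P_j},\qquad g:=f\,\mathbf{1}_{G\setminus E}+\sum_j c_j\,\mathbf{1}_{P_j},
\]
so that $f=g+\sum_jb_j$.

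Properties (b) and (c) are then pure bookkeeping: by construction $b_j$ vanishes off $P_j$ and $\int_{P_j}b_j\,\de\mu=0$, while $\|b_j\|_1\le\int_{P_j}|f|\,\de\mu+|c_j|\,\mu(P_j)\le 2\int_{P_j}|f|\,\de\mu$, so summing over the disjoint $P_j$ yields $\sum_j\|b_j\|_1\le 2\int_E|f|\,\de\mu\le 2\|f\|_1\le 2C_1\|f\|_1$.

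For (a) I would argue separately on $E$ and on $G\setminus E$. On $P_j$, maximality forces the parent $p(P_j)\in\mathcal D^Z$ to be non-$\alpha$-heavy, so $\int_{P_j}|f|\,\de\mu\le\int_{p(P_j)}|f|\,\de\mu\le\alpha\,\mu(p(P_j))\le C_1\alpha\,\mu(P_j)$ by the volume control $(iv)$ of Theorem~\ref{thm dyadic partitions}, whence $|g|=|c_j|\le C_1\alpha$ on $P_j$. On $G\setminus E$ we have $g=f$, and for any $x\in G\setminus E$ no cylinder of the chain $\{P^x_k\}$ can be $\alpha$-heavy (otherwise $x$ would lie in a maximal $\alpha$-heavy cylinder, i.e.\ in $E$), so the dyadic averages $f_k(x):=\frac{1}{\mu(P^x_k)}\int_{P^x_k}f\,\de\mu$ satisfy $|f_k(x)|\le\alpha$ for every $k$. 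The remaining ingredient is that $f_k\to f$ $\mu$-a.e.: this is the martingale convergence theorem for the filtration generated by the partitions $\{\mathcal D^Z_k\}_{k\in\ZZ}$ (equivalently, Lebesgue differentiation along the net $\mathcal D^Z$), and it holds because the cylinders of $\mathcal D^Z$ shrink to points. This last fact is what actually requires an argument, and it can be read off the son/parent relations of Definitions~\ref{def sons} and~\ref{def parent}: along any infinite descending chain in $\mathcal D^Z$ the base Christ cube must be refined infinitely often and the vertical parameter $r$ must tend to $1$ --- otherwise admissibility would be violated for $k$ large, since $\delta^k\to0$ cannot remain comparable to the eventually frozen quantities $ar^2$ or $ae^2\log r$ --- so by Theorem~\ref{christ}~$(iii)$ both the $N$-base and the vertical fiber of the cylinders in the chain have vanishing diameter. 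Combining the two regions gives $|g|\le C_1\alpha$ $\mu$-a.e. I expect this passage to the limit to be the only step that is not a routine verification.

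Finally, for the converse: given $P\in\mathcal D^Z$, take $f:=\mathbf{1}_P\in L^1(\mu)$ (finite by Proposition~\ref{prop-cyl}~$(vi)$) and choose any $\alpha\in\bigl[\mu(P)/\mu(p(P)),\,1\bigr)$, a nonempty interval since $\mu(p(P))\ge(1+C_1^{-1})\mu(P)$ by Propositions~\ref{prop sons} and~\ref{ammissibilita padri}. Then $\frac{1}{\mu(P)}\int_P|f|\,\de\mu=1>\alpha$, so $P$ is $\alpha$-heavy, whereas every cylinder $P'\in\mathcal D^Z$ strictly containing $P$ is an ancestor of $P$ by Theorem~\ref{thm dyadic partitions}~$(ii)$, hence contains $p(P)$, so $\frac{1}{\mu(P')}\int_{P'}|f|\,\de\mu=\mu(P)/\mu(P')\le\mu(P)/\mu(p(P))\le\alpha$ and $P'$ is not $\alpha$-heavy. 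Thus $P$ is a maximal $\alpha$-heavy cylinder, i.e.\ $P\in\mathcal E(f,\alpha)$, which is exactly what the statement requires.
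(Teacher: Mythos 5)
Your proposal is correct and follows essentially the same stopping-time argument as the paper: the paper selects, for ($\mu$-almost) each point of the level set of the dyadic maximal operator $\mathcal M^{\mathcal D}_Z$, the $\alpha$-heavy cylinder of maximal measure, which coincides with your family of maximal $\alpha$-heavy cylinders, and properties (a)--(c) and the converse are obtained in the same way (for the converse the paper tests with the characteristic function of a son of $P$ instead of $\mathbf 1_P$, an immaterial variation). The only real difference is that where the paper simply asserts the a.e.\ bound $|f|\le \mathcal M^{\mathcal D}_Z f$ off the stopping region, you justify the underlying differentiation statement by noting that admissibility forces every descending chain in $\mathcal D^Z$ to refine both the Christ cube and the parameter $r$ infinitely often, so the dyadic cylinders shrink to points and the martingale (or density) argument applies; this is a welcome extra precision rather than a different route.
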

	
	\begin{proof}
		Let $f\in L^1(\mu)$, and consider the dyadic Hardy--Littlewood maximal operator associated with $\mathcal D^Z$ applied to $f$, i.e.,
		$$\mathcal M^{\mathcal D}_Z f(x)=\sup_{P\in\mathcal D^Z, P\ni x}\frac{1}{\mu(P)}\int_P|f|d\mu,\qquad x\in G.
		$$
		Let $\alpha>0$ and $E_{\alpha}=\{x\in G: \mathcal M^{\mathcal D}_Z f(x)>\alpha \}$. Let $V\subset G$ be the null measure set as in the proof of Theorem~\ref{thm dyadic partitions}. For every $x\in E_{\alpha}\cap V$, let $P^x$ be the set of maximal measure in $\mathcal D^Z$ among those containing $x$ such that 
		\[\frac{1}{\mu(P^x)}\int_{P^x}|f|d\mu>\alpha.\] 
		Such set exists because, by Theorem~\ref{thm dyadic partitions}~$(iii)$, if $x\in P^x_k\in \mathcal{D}^Z_k$ then $\mu(P_k^x)\to+\infty$ as $k\to -\infty$. The family $\{P^x\}_{x\in E_{\alpha}}$ is at most countable and we denote it by $\{P_j\}$. We then have that ${\bigsqcup\limits_j}\,P_j$ has full measure in $E_\alpha$ and
		\begin{equation}\label{proof c}
			\mu(E_\alpha)=\sum_j \mu(P_j)\leq \frac{1}{\alpha}\sum_j\int_{P_j}|f|d\mu \leq \frac{1}{\alpha}\|f\|_1.
		\end{equation}
		Moreover, for every $j$,
		\begin{equation}\label{maximal dyadic auxiliary equation}
			\frac{1}{\mu(P_j)}\int_{P_j}|f|d\mu>\alpha,\quad\text{and}\quad \frac{1}{ {\mu(p(P_j))}}\int_{p(P_j)}|f|d\mu\leq\alpha.
		\end{equation}
		We define now 
		\begin{align*}
			&g(x) = \begin{cases}  \displaystyle\frac{1}{\mu(P_j)} \int_{P_j} f \ d\mu,  &\text{if $x \in P_j$,} \\ 
				f(x), &\text{else,}\end{cases} \\ 
			&b_j(x) = \bigg(f(x)- \frac{1}{\mu(P_j)}\int_{ P_j} f \ d\mu \bigg) \chi_{P_j}(x).  
		\end{align*} 
		Clearly (b) holds. Moreover by \eqref{maximal dyadic auxiliary equation} and Theorem \ref{thm dyadic partitions}~$(iv)$,
		\begin{align}\label{proof d}
			\|b_j\|_{1} \le 2 \int_{P_j} |f|\ d\mu \le 2\int_{{p(P_j)}}|f|\ d\mu\le 2\alpha \mu( {p(P_j)})\le 2 {C_1}\alpha\mu(P_j).
		\end{align}
		Item (c) follows from~\eqref{proof c} and~\eqref{proof d}. To prove (a), assume first that $x\in E_\alpha$. Then, by \eqref{maximal dyadic auxiliary equation} and Theorem \ref{thm dyadic partitions} we have
		\begin{equation*}
			|g(x)|\leq\frac{1}{\mu(P_j)} \int_{P_j} |f| \ d\mu\leq \frac{C_1}{ {\mu(p(P_j))}}\int_{p(P_j)}|f|d\mu\leq C_1\alpha.
		\end{equation*}
		On the other hand, since for almost every $x$ we have the pointwise bound $|f|\leq \mathcal M^{\mathcal D}_Z f$, it follows that for almost every $x\notin E_\alpha$,
		\begin{equation*}
			|g(x)|=|f(x)|\leq \mathcal M^{\mathcal D}_Z f(x)\leq \alpha. 
		\end{equation*}
		This proves that for any $f\in L^1(\mu)$ and any $\alpha>0$ we can choose $\mathcal{E}(f,\alpha)\subset \mathcal{D}^Z$ with the desired properties, and therefore $\mathcal{E}=\{\mathcal{E}(f,\alpha): f\in L^1(\mu),\alpha>0\}\subset \mathcal{D}^Z$.
		
		Now we show that, conversely, for every $P\in \mathcal{D}^Z$ there exists a function $f\in L^1(\mu)$ and a number $\alpha>0$ such that $P\in \mathcal{E}(f,\alpha)$, which proves that $\mathcal{E}=\mathcal{D}^Z$.
		To see this, fix an arbitrary set $P_0\in \mathcal{D}^Z$, let $S\in s(P_0)$ and let $R$ be the set of minimal measure in $\mathcal{D}^Z$ among those properly containing $P_0$. Set $f=\chi_S$ and $\alpha=\mu(S)/\mu(R)$. Then,
		\begin{equation*}
			M^{\mathcal D}_Z f(x)=\sup_{P\in\mathcal D^Z, P\ni x}\frac{\mu(P\cap S)}{\mu(P)}.
		\end{equation*}
		Hence, if $x\in S$ we have $M^{\mathcal D}_Z f(x)=1>\alpha$, while for $x\not\in S$, denoting by $P'$ the smallest element of $\mathcal{D}^Z$ containing both $x$ and $S$, we have $ M^{\mathcal D}_Z f(x)=\mu(S)/\mu(P')$. But according to Theorem \ref{thm dyadic partitions}, the smallest set in $\mathcal{D}^Z$ containing $S$ is $P_0$. It follows that $M^{\mathcal D}_Z f(x)=\mu(S)/\mu(P_0)>\alpha$ if $x\in P_0\setminus S$, while $M^{\mathcal D}_Z f(x)\leq\mu(S)/\mu(R)=\alpha$ if $x\not\in P_0$. It follows that $E_{\alpha}=\{x\in G: \mathcal M^{\mathcal D}_Z f(x)>\alpha \}=P_0$. Following verbatim the above construction, one obtains a CZ decomposition for $f$ where the family $\{P_j\}$ consists of a single element, which is $P_0$.
	\end{proof}

	\section{The maximal Hardy--Littlewood operator}\label{sec: maximal}

	Along the lines of the proof of Theorem \ref{CZdec} in the previous section it is proved that the dyadic Hardy--Littlewood maximal function is of weak type $(1,1)$. In this section we provide a covering lemma for admissible cylinders which allows us to deduce that also the Hardy--Littlewood maximal function associated to the family of all admissible cylinders (not necessarily dyadic) is of weak-type $(1,1)$.
	
	This result fits into a quite active line of research, since in the last years many authors investigated the $L^p$ and weak type $(1,1)$ boundedness of Hardy--Littlewood maximal operators in nondoubling metric measure spaces, such as Lie groups and manifolds of exponential growth \cite{aldaz,  GGHM, GGM, GaudrySjogren, Ionescu, stromberg, Val}, nondoubling infinite graphs \cite{CMS10, HS, LMSV, LS, ORIS, SoriaTradacete}, and, more generally, nondoubling measure metric spaces \cite{Dariusz, Dariusz2, NaorTao, stempak}.

	\begin{defn}\label{def env}
		Given a constant $C>1$ and a cylinder $P=P_{r,Q}(a)$ its $C-$\textit{envelope} is defined as %\ML{cambiare questa notazione, sembra complementare di un insieme, e si confonde con l'allargato $P^R$}
		\begin{equation*}
			\dil{P}{C}:=P_{r^C,Q}(a).
		\end{equation*}
	\end{defn}
	Fix $Z\in\mathcal Z$ and $\mu\in\mathcal F_Z$. Observe that by Proposition~\ref{prop-cyl}~$(vi)$ 
	\begin{equation}\label{envelope measure}
		\mu(\dil{P}{C})=C\mu(P).
	\end{equation}
	%and this makes clear in which sense $(G,\mu)$ \textit{is a doubling space with respect to cylinders}. 
	It is well known that in a metric measure space of homogeneous type the Vitali covering Lemma implies the weak type $(1,1)$ boundedness of the Hardy--Littlewood maximal function, see for example \cite[Theorem 2.1]{coifman}. Hence, it is natural in our context to check whether one can obtain appropriate covering lemmas for admissible cylinders which, together with \eqref{envelope measure},   imply the weak type $(1,1)$ boundedness for the Hardy--Littlewood maximal function
	\begin{equation*}   \mathcal{M}_Zf(x)=\sup_{\substack{P\in\mathcal P^Z\\ P\ni x}}\frac{1}{\mu(P)}\int_P|f|d\mu, \qquad f\in L^1_{\text{loc}}(\mu), \ x\in G,
	\end{equation*}
	where $\mathcal P^Z$ denotes the family of all admissible cylinders.
	
	In this section we carry out this program and we are able to prove the following maximal theorem.
	
	\begin{thm}\label{maximal theorem}
		For any $Z\in \mathcal{Z}$, $\mu\in\mathcal F_Z$, $\alpha>0$ and $f\in L^1(\mu)$,
		\begin{equation*}
			\mu(\{x\in G: \ \mathcal{M}_Zf(x)>\alpha\})\leq \frac{C_2}{\alpha}\Vert f\Vert_1,
		\end{equation*}
		with $C_2=3\max\{\gamma+1+\log\lambda, \lambda e^3\}$.% In other words, $\mathcal M^{\mathcal P}_Z$ is of weak type $(1,1)$.
	\end{thm}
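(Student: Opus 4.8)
The plan is to run the standard Vitali-type argument, the geometric input being a covering lemma for admissible cylinders, Proposition~\ref{vitali}, which I would establish first and which carries essentially all of the work.

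First I would reduce to the covering lemma. Fix $\alpha>0$ and $f\in L^1(\mu)$, and set $\Omega_\alpha:=\{x\in G:\mathcal M_Z f(x)>\alpha\}$. By the definition of $\mathcal M_Z$, for each $x\in\Omega_\alpha$ there is an admissible cylinder $P_x=P_{r_x,Q_x}(a_x)$ with $x\in P_x$ and $\mu(P_x)<\alpha^{-1}\int_{P_x}|f|\,d\mu\le\alpha^{-1}\|f\|_1$. Hence $\mathcal A:=\{P_x:x\in\Omega_\alpha\}$ is a family of admissible cylinders of uniformly bounded $\mu$-measure and $\Omega_\alpha\subset\bigcup_x P_x$, so it suffices to cover this union efficiently by envelopes of a subfamily. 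The covering lemma I would prove asserts that from any family of admissible cylinders of uniformly bounded $\mu$-measure one can extract a subfamily $\{P_j\}$ which is pairwise disjoint (or of bounded overlap, depending on the exact formulation) and such that $\bigcup\mathcal A\subset\bigcup_j\dil{P_j}{C_2}$. Granting this, \eqref{envelope measure} and the (quasi-)disjointness of the $P_j$ give
\[
\mu(\Omega_\alpha)\le\sum_j\mu\bigl(\dil{P_j}{C_2}\bigr)=C_2\sum_j\mu(P_j)\le\frac{C_2}{\alpha}\sum_j\int_{P_j}|f|\,d\mu\le\frac{C_2}{\alpha}\|f\|_1 ,
\]
which is the assertion of the theorem.

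The heart of the matter is thus the covering lemma, whose engine is a comparison principle for \emph{intersecting} admissible cylinders $P_1=P_{r_1,Q_1}(a_1)$ and $P_2=P_{r_2,Q_2}(a_2)$. By Proposition~\ref{prop-cyl}\,$(iv)$, $P_1\cap P_2\neq\emptyset$ forces the base cubes to meet and the vertical intervals $U_{r_1}(a_1)$, $U_{r_2}(a_2)$ to meet; since Christ cubes of distinct generations are nested or disjoint, one of the two base cubes contains the other. Writing $Q_i\in\mathcal Q_{k_i}$ and combining the overlap of the intervals — which yields $|\log a_1-\log a_2|<\log r_1+\log r_2$ — with whichever of the admissibility constraints
\[
a_ir_i^2\le\delta^{k_i}\le\lambda a_ir_i^\gamma\quad(\text{large regime}),\qquad a_ie^2\log r_i\le\delta^{k_i}\le\lambda a_ie^2\log r_i\quad(\text{small regime})
\]
hold for $P_1$ and $P_2$, together with the monotonicity $\delta^{k_1}\ge\delta^{k_2}$ forced by the containment of the cubes, one extracts two-sided quantitative control: the generations $k_1,k_2$ turn out to differ by a bounded amount (so $\mu_N(Q_1)\approx\mu_N(Q_2)$ by doubling), and the ratio $\log r_2/\log r_1$ is bounded by $\gamma+1+\log\lambda$ in the large regime and by a quantity governed by $\lambda e^3$ in the small and mixed regimes — these being precisely the two entries of the maximum defining $C_2$. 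Feeding this back and using that an interval of half-length $\ell_2$ meeting one of half-length $\ell_1$ lies in the concentric enlargement of the latter of half-length $\ell_1+2\ell_2$, the smaller of the two cylinders is seen to lie in the $C_2$-envelope of the larger (the extra factor $3$ in $C_2$ coming from this interval estimate). The covering lemma then follows by the usual greedy selection, which is legitimate precisely because the $\mu$-measures are assumed bounded.

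The main obstacle is entirely this covering lemma. Unlike in a space of homogeneous type, the ``size'' of an admissible cylinder is measured by two incommensurable quantities — the $\mu_N$-mass of its (non-homogeneous) base Christ cube and the length $2\log r$ of its vertical fibre — while the envelope of Definition~\ref{def env} dilates only the vertical direction; so there is no single scale to run the Vitali machinery against. The point is to squeeze out of the admissibility constraints enough rigidity that an intersection of two admissible cylinders already pins down, two-sidedly, both the relative sizes of the base cubes and the relative lengths of the vertical fibres — and this forces one to track carefully the large/small dichotomy together with the mixed cases, a somewhat lengthy but elementary analysis which is exactly what produces the explicit constant $C_2=3\max\{\gamma+1+\log\lambda,\lambda e^3\}$. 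By contrast, the passage from the covering lemma to the weak type $(1,1)$ estimate is routine.
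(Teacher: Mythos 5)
Your overall route is the same as the paper's: the engine is the engulfing statement for intersecting admissible cylinders (Lemma~\ref{intersecting cylinders}), fed into a Vitali-type selection and the identity \eqref{envelope measure}, and your account of the engulfing step --- nested base cubes via Proposition~\ref{prop-cyl}\,$(iv)$ and the dyadic structure, the interval overlap giving $|\log(a_1/a_2)|<\log r_1+\log r_2$, the large/small/mixed case analysis producing the two entries $\gamma+1+\log\lambda$ and $\lambda e^3$, and the factor $3$ from the interval estimate --- matches the paper's Lemma~\ref{intersecting cylinders}. One parenthetical claim is false, though inessential: intersecting admissible cylinders need \emph{not} have base cubes of comparable generation (in the large/large case one only gets $\delta^{k_1}/\delta^{k_2}\lesssim\lambda r_1^{\gamma+1}$, which is unbounded in $r_1$); the lemma only uses $Q_2\subset Q_1$ together with the vertical containment, not any comparability of $\mu_N(Q_1)$ and $\mu_N(Q_2)$.

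The genuine gap is in your covering lemma, both in its hypothesis and in the selection rule. The engulfing statement is keyed to the \emph{generation of the base cube}: if $P\cap R\neq\emptyset$ and the base of $R$ has generation $\leq$ that of $P$, then $P\subset\dil{R}{C_2}$; and the envelope of Definition~\ref{def env} never enlarges the base. Hence the greedy selection must take cylinders with the largest base cubes (minimal generation) first, as in Lemma~\ref{vitali}. Selecting by (approximately maximal) $\mu$-measure, which is what ``legitimate because the measures are bounded'' suggests, does not work: since $\mu(P)=2\mu_N(Q)\log r$, a selected cylinder $R$ can have $\mu(R)\geq\mu(P)$ while its base cube is \emph{strictly contained} in that of an intersecting $P$ (long fibre compensating a small base); then Lemma~\ref{intersecting cylinders} gives $R\subset\dil{P}{C_2}$, the wrong way round, and $P\not\subset\dil{R}{C}$ for any $C$ because points of $P$ over $Q_P\setminus Q_R$ are never captured. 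Moreover, the hypothesis ``uniformly bounded $\mu$-measure'' does not make the correct (generation-based) selection legitimate either: what is needed is that the generations are bounded below (the hypothesis $k_0>-\infty$ in Lemma~\ref{vitali}), and bounded $\mu(P)$ does not imply this for a general doubling $\mu_N$, since arbitrarily large balls in $N$ may have small $\mu_N$-measure (e.g.\ $N=\mathbb R$ with $\de\mu_N=|x|^{-1/2}\de x$). The paper closes exactly this gap by restricting to a compact subset $K$ of the level set, extracting a finite subcover by the cylinders $P_x$, applying the covering lemma to that finite family, and then letting $K$ exhaust the level set by inner regularity of $\mu$; some such reduction (or a proof that your family really has generations bounded below) is missing from your argument.
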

	The next result is a fundamental step in obtaining a covering lemma for admissible cylinders, from which Theorem \ref{maximal theorem} will follow as a rather direct consequence.

	\begin{lem}\label{intersecting cylinders}
		Let $P_j=P_{r_j,Q_j}(a_j)$, $j=1,2$, be two admissible cylinders such that $P_1\cap P_2\neq\emptyset$. Denote by $k_j$ the generation of $Q_j$. If $k_1\leq k_2$ then $P_2\subset \dil{P_1}{C_2}.$
	\end{lem}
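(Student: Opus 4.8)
The goal is to show that, for two intersecting admissible cylinders $P_j=P_{r_j,Q_j}(a_j)$ with the generation of $Q_1$ not larger than that of $Q_2$, the second is contained in the $C_2$-envelope of the first. Since $C_2$-envelopes only enlarge the $A$-component (the interval $U_{r_1}(a_1)$ becomes $U_{r_1^{C_2}}(a_1)$), by Proposition~\ref{prop-cyl}~$(iv)$ the inclusion $P_2\subset\dil{P_1}{C_2}$ is equivalent to two separate facts: $(\alpha)$ $Q_2\subset Q_1$, and $(\beta)$ $U_{r_2}(a_2)\subset U_{r_1^{C_2}}(a_1)$. The plan is to establish these two inclusions in turn, extracting quantitative information from the admissibility inequalities of Definition~\ref{ammiss}.

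For $(\alpha)$: since $P_1\cap P_2\neq\emptyset$, part $(iv)$ of Proposition~\ref{prop-cyl} gives $Q_1\cap Q_2\neq\emptyset$. The Christ cubes $Q_1\in\mathcal Q_{k_1}$ and $Q_2\in\mathcal Q_{k_2}$ with $k_1\le k_2$ are nested or disjoint (iterate property $(ii)$ of Theorem~\ref{christ}: $Q_2$ is contained in a unique ancestor in $\mathcal Q_{k_1}$, and is disjoint from all the others), so $Q_1\cap Q_2\neq\emptyset$ forces $Q_2\subset Q_1$. This part is essentially immediate.

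For $(\beta)$: write the admissibility of $P_j$ in the form of a two-sided bound on $\delta^{k_j}$ in terms of $a_j$ and $r_j$ — either $a_jr_j^2\le\delta^{k_j}\le\lambda a_jr_j^\gamma$ (large case, $r_j>e$) or $a_je^2\log r_j\le\delta^{k_j}\le\lambda a_je^2\log r_j$ (small case, $1<r_j\le e$). Using $k_1\le k_2$, hence $\delta^{k_1}\ge\delta^{k_2}$, I chain the two pairs of inequalities to compare the "scales" $a_1,r_1$ with $a_2,r_2$. Concretely, one gets an upper bound of the type $a_2r_2^{\kappa}\le \lambda a_1 r_1^{\gamma}$ and a lower bound $a_1 r_1^{2}\le \lambda a_2 r_2^{\gamma}$ (with the appropriate modifications and the quantities $e^2\log r_j$ in place of the relevant power in the small cases), where $\kappa\in\{2,?\}$ is the exponent coming from the lower admissibility bound on $P_2$. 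From these I read off that $a_2e^{-s}$ for $s$ at the left endpoint of $U_{r_2}(a_2)$ lies above the left endpoint of $U_{r_1^{C_2}}(a_1)$, and symmetrically for the right endpoints; equivalently $a_2/r_2 \ge a_1/r_1^{C_2}$ and $a_2 r_2 \le a_1 r_1^{C_2}$. Keeping careful track of the constants and using $\gamma\ge5$, $\lambda>e^3/\delta$ (so $\lambda e^3>1$, $\log\lambda>0$), the two conditions are satisfied with the stated $C_2=3\max\{\gamma+1+\log\lambda,\lambda e^3\}$; the several admissibility regimes (both large, both small, mixed) each give a slightly different numeric chain, but all are dominated by this choice.

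The main obstacle is purely bookkeeping: one must run the case analysis (large/large, small/small, large/small, small/large) through the logarithmic form of the inequalities and check that the resulting exponent and multiplicative constant are both absorbed by $C_2$. The mixed cases, where one cylinder has $r\le e$ and the other $r>e$, are the most delicate, because the admissibility scale is $\log r$ in one and a power of $r$ in the other; there one uses $\log r\le r$ for $r\le e$ together with $r^2\le e^2$ to bridge the two. The factor $3$ and the $\log\lambda$ and $\lambda e^3$ terms in $C_2$ are precisely what one needs to close these estimates uniformly.
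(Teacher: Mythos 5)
Your reduction of the inclusion to the two statements $(\alpha)$ $Q_2\subset Q_1$ and $(\beta)$ $U_{r_2}(a_2)\subset U_{r_1^{C_2}}(a_1)$ is the same reduction the paper makes, and your treatment of $(\alpha)$ via the nesting of Christ cubes is fine. The gap is in $(\beta)$: you claim to close it by chaining only the admissibility bounds of Definition~\ref{ammiss} through $\delta^{k_1}\geq\delta^{k_2}$, but that chain transfers information in one direction only. For instance, in the large/large case it yields $a_2r_2^2\leq\delta^{k_2}\leq\delta^{k_1}\leq\lambda a_1r_1^{\gamma}$, whereas the ``lower bound'' you assert, $a_1r_1^2\leq\lambda a_2r_2^{\gamma}$, would require an upper bound on $\delta^{k_1}$ in terms of $(a_2,r_2)$, which is simply not among your ingredients. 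More fundamentally, admissibility together with $k_1\leq k_2$ and $Q_2\subset Q_1$ puts no constraint at all on $|\log a_1-\log a_2|$: one can take $a_2$ arbitrarily small (raising $k_2$ accordingly so that $a_2r_2^2\leq\delta^{k_2}\leq\lambda a_2r_2^{\gamma}$ still holds), and then $U_{r_2}(a_2)$ drifts off to $-\infty$ while every hypothesis you actually invoke remains true, so the endpoint inequalities $a_2/r_2\geq a_1/r_1^{C_2}$ and $a_2r_2\leq a_1r_1^{C_2}$ cannot follow from them.

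The missing ingredient is the second half of Proposition~\ref{prop-cyl}~$(iv)$: $P_1\cap P_2\neq\emptyset$ also forces $U_{r_1}(a_1)\cap U_{r_2}(a_2)\neq\emptyset$, i.e.\ $a_1<a_2r_1r_2$ and $a_2<a_1r_1r_2$. This is what controls the distance between the centers: it gives $\log(ar_2)\leq 3\max\{\log r_1,\log r_2\}$ with $a=\max\{a_1/a_2,a_2/a_1\}$, so the case $r_1\geq r_2$ is settled with constant $3$ without any admissibility at all, and when $r_1<r_2$ it remains only to prove $3\log r_2\leq C_2\log r_1$. That last comparison is where the one-directional admissibility chain (combined again with $a_1<a_2r_1r_2$) enters, through the three cases small/small, large/large, large $P_2$/small $P_1$, producing the constants $\lambda e^2$, $\gamma+1+\log\lambda$ and $\lambda e^3$ that explain the value of $C_2$. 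So your case-analysis instinct is right, but as written the argument cannot be completed: you must import the interval-overlap inequalities before any of the endpoint estimates can be read off.
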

	
	\begin{proof}
		Let $I_j:=U_{r_j}(a_j)$, defined as in~\eqref{Ur}. By Proposition \ref{prop-cyl}~$(iv)$, $Q_1$ and $Q_2$ must intersect, hence due to the properties of dyadic sets and $k_2\geq k_1$ it must be $Q_2\subset Q_1$, and $I_1\cap I_2\neq \emptyset$, hence
		%\begin{equation*}
		\begin{subnumcases}{\left(\frac{a_1}{r_1},a_1r_1\right)\cap \left(\frac{a_2}{r_2},a_2r_2\right)\neq \emptyset\quad\text{i.e.}\quad }
			a_1< a_2r_1r_2,\label{equazione}\\
			a_2< a_1r_1r_2.\label{equazione2}
		\end{subnumcases}
		For every interval $I=(t-R,t+R)\subset\R$ and $C>0$ we put $I^C:=(t-CR,t+CR)$. Hence it is enough to prove that $I_2\subset I_1^{C_2}=U_{r_1^{C_2}}(a_1)$, or equivalently,
		that
		\begin{equation*}
			\log (ar_2)\leq {C_2} \log r_1, \quad a:=\max\Big\{\frac{a_1}{a_2},\frac{a_2}{a_1}\Big\}.
		\end{equation*}
		%\begin{equation*}
		%    \text{(i) } \log \Big(\frac{a_1}{a_2}r_2\Big)\leq C \log r_1, \quad \text{and}\quad \text{(ii) }\log \Big(\frac{a_2}{a_1}r_2\Big)\leq C \log r_1.
		%\end{equation*}
		If $r_1\geq r_2$ the result is easily proved: by~\eqref{equazione} and~\eqref{equazione2}, we have
		\[\log(ar_2)< \log(r_1r_2^2)\leq 3\log(r_1).\]
		Hereinafter, let $r_1<r_2$. By the same argument we have $\log(ar_2)\leq 3\log(r_2)$.
		%\end{equation*}
		%If $a_1r_1\geq a_2r_2$, it is enough to prove that $a_1r_2\leq a_2r_1^C$, while if $a_1r_1\leq a_2r_2$ it is enough to prove ......
		To prove the result it is therefore sufficient to show that $3\log r_2\leq {C_2}\log r_1$. In order to do that, we distinguish three cases. If $P_2$ is small then also $P_1$ is, and 
		\begin{equation*}
			a_2e^2\log r_2\leq\delta^{k_2}\leq \delta^{k_1}\leq\lambda a_1 e^2\log r_1,
		\end{equation*}
		which together with \eqref{equazione} gives
		\begin{equation*}
			\log r_2<\lambda r_1r_2 \log r_1\leq \lambda e^2\log r_1.
		\end{equation*}
		If $P_1$ and $P_2$ are both large, then
		\begin{equation*}
			a_2r_2^2\leq\delta^{k_2}\leq \delta^{k_1}\leq \lambda a_1r_1^\gamma,
		\end{equation*}
		which together with \eqref{equazione}, and the fact that, since $r_1>e$, $\lambda<r_1^{\log \lambda}$, gives
		\begin{equation*}
			r_2< \lambda r_1^{\gamma+1}<r_1^{\gamma+1+\log \lambda},
		\end{equation*}
		which in turn implies
		\begin{equation*}
			\log r_2\leq (\gamma+1+\log \lambda)\log r_1.
		\end{equation*}
		Finally, if $P_2$ is large and $P_1$ is small, then
		\begin{equation*}
			a_2r_2^2\leq\delta^{k_2}\leq \delta^{k_1}\leq\lambda a_1e^2\log r_1,
		\end{equation*}
		which together with \eqref{equazione} gives
		\begin{equation*}
			\log r_2\leq r_2< \lambda r_1 e^2 \log r_1\leq \lambda e^3 \log r_1.
		\end{equation*}
		Summing up, in any case, if $P_1$ and $P_2$ are admissible then 
		\begin{equation*}
			3\log r_2\leq {C_2}\log r_1,
		\end{equation*}
		where ${C_2}=3\max\{\gamma+1+\log\lambda, \lambda e^3\}$, since $\lambda e^3>1$, and this concludes the proof.
	\end{proof}
	
	From this technical geometric result, a Vitali-like covering lemma for admissible cylinders follows.

	\begin{lem}[Covering lemma for admissible cylinders]\label{vitali}
		Let $\mathcal{C}$ be a family of admissible cylinders such that 
		%\[\sup\{\diam P\colon P\in\mathcal C\}<+\infty. \]
		\[k_0:=\min\{k\in\mathbb Z\colon P_{r,Q}(a)\in\mathcal C,\,Q\in\mathcal{Q}_k\}>-\infty.\]
		Then, there exists a countable subfamily $\mathcal{G}\subset\mathcal{C}$ such that the cylinders of $\mathcal{G}$ are pairwise disjoint and for each $P\in \mathcal{C}$ there exists $R\in \mathcal{G}$ with $P\cap R\neq \emptyset$ and $P\subset \dil{R}{{C_2}}$. In particular,
		\begin{equation*}
			\bigcup_{P\in \mathcal{C}}P\subset\bigcup_{R\in \mathcal{G}}\dil{R}{C_2}.
		\end{equation*}
	\end{lem}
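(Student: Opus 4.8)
The statement is a Vitali-type covering lemma, so the natural strategy is the standard greedy selection adapted to our setting, where the role of ``radius'' is played by the generation index $k$ of the base cube. The key point that makes the greedy argument work is Lemma~\ref{intersecting cylinders}: if two admissible cylinders meet and the base cube of one has smaller generation (hence bigger measure, in the sense of being a coarser Christ cube), then the $C_2$-envelope of the one with smaller generation swallows the other. Since by hypothesis the generations appearing in $\mathcal C$ are bounded below by $k_0$, we can organize the selection generation by generation, starting from the coarsest.

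First I would run the selection as follows. Set $k_0$ as in the statement. Among all cylinders in $\mathcal C$ whose base cube has generation $k_0$, the base cubes are pairwise disjoint Christ cubes (they all live in $\mathcal Q_{k_0}$), but the cylinders themselves may still overlap because of the ``vertical'' intervals $U_{r}(a)$; so within generation $k_0$ I would pick a maximal pairwise-disjoint subfamily (possible since, by Proposition~\ref{prop-cyl}(iv), two such cylinders with the \emph{same} base cube $Q$ intersect iff their intervals $U_{r_1}(a_1)$ and $U_{r_2}(a_2)$ intersect, and a maximal disjoint subcollection of intervals can always be extracted). Having fixed the selected cylinders of generation $k_0$, I pass to generation $k_0+1$: I look at cylinders in $\mathcal C$ with base cube in $\mathcal Q_{k_0+1}$ that do \emph{not} intersect any already-selected cylinder, and again extract a maximal pairwise-disjoint subfamily; and so on inductively over $k=k_0,k_0+1,\dots$. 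Let $\mathcal G$ be the union over all generations of the selected cylinders. By construction $\mathcal G$ is countable (each $\mathcal Q_k$ is countable and for each base cube only countably many intervals are picked) and pairwise disjoint.

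Next I would verify the covering property. Take $P=P_{r,Q}(a)\in\mathcal C$ with $Q\in\mathcal Q_{k}$, $k\ge k_0$. If $P\in\mathcal G$ we are done with $R=P$. Otherwise, by maximality of the selection at stage $k$, either $P$ was discarded because it intersects some earlier-selected $R\in\mathcal G$ with base generation $k'\le k$, or $P$ itself intersects a cylinder selected at stage $k$. In the first case Lemma~\ref{intersecting cylinders} applies directly (with $k_1=k'\le k=k_2$) and gives $P\subset \dil{R}{C_2}$. In the second case, $P$ meets some $R\in\mathcal G$ with base generation exactly $k$; again $k_1=k_2=k$ and Lemma~\ref{intersecting cylinders} gives $P\subset\dil{R}{C_2}$. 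In all cases we found $R\in\mathcal G$ with $P\cap R\ne\emptyset$ and $P\subset\dil{R}{C_2}$, and the displayed inclusion $\bigcup_{P\in\mathcal C}P\subset\bigcup_{R\in\mathcal G}\dil{R}{C_2}$ follows.

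The only genuinely delicate point is making the greedy selection well-defined and exhaustive across infinitely many generations, i.e.\ ensuring that every $P\in\mathcal C$ is ``seen'' at some finite stage and that maximal disjoint subfamilies exist at each stage. The existence of maximal disjoint subfamilies is a routine Zorn's lemma (or transfinite/greedy) argument, using that at a fixed base cube the obstruction to disjointness is purely one-dimensional (intervals in $\mathbb R$), so this is not a real obstacle. The fact that $k_0>-\infty$ is exactly what guarantees the induction on generations has a starting point and that no cylinder in $\mathcal C$ escapes the process; without it one could not even begin. So the hypothesis $k_0>-\infty$ is used in an essential way, and Lemma~\ref{intersecting cylinders} does all the geometric work. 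I expect the write-up to be short once the selection scheme is set up carefully.
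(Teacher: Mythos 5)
Your proof is correct and follows essentially the same strategy as the paper: a greedy Vitali-type selection ordered by base-cube generation, with all the geometric work delegated to Lemma~\ref{intersecting cylinders} exactly as in the paper's argument. The only difference is organizational --- you sweep generation by generation extracting maximal disjoint subfamilies, while the paper selects one cylinder at a time of minimal base generation among those disjoint from the previous choices --- which is an inessential variation (if anything, your version makes the exhaustion of an infinite family $\mathcal{C}$ slightly more explicit).
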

	
	\begin{proof}
		Choose a cylinder $P_0\in\mathcal C$ with base set of generation $k_0$. For every $i\geq 1$, among all the sets of $\mathcal{C}$ not intersecting 
		$P_j$ for any $j<i$ (if any exists), let $P_i$ be one with base set of minimal generation, say $k_i$. The elements of the resulting family $\mathcal{G}=\{P_i\}_{i=0}^\infty$ are pairwise disjoint. Moreover, for any $P\in\mathcal{C}$ there exists an index $i$ such that $P\cap P_i\neq\emptyset$. In particular, if $P_i$ is a cylinder with base set of minimal generation among those of $\mathcal{G}$ intersecting $P$, then the generation of the base set of $P$ must be $\geq k_i$. In fact, either $P\in \mathcal{G}$, in which case it must be $P=P_i$, or $P\not\in \mathcal{G}$, and since $P$ does not intersect elements of $\mathcal{G}$ having base sets of generation $<k_i$, the generation of its base set cannot be $<k_i$, since otherwise $P$ would belong to $\mathcal{G}$. Hence, by Proposition \ref{intersecting cylinders}, $P\subset \dil{P_i}{{C_2}}$.
	\end{proof}
	We are now in a position to prove Theorem~\ref{maximal theorem}.
	
	\begin{proof}[Proof of Theorem \ref{maximal theorem}]
		
		Let $x\in E_\alpha=\{x\in G: \ \mathcal{M}_Zf(x)>\alpha\}$, and choose an admissible cylinder $P_x$ containing $x$ such that 
		\[\int_{P_x}|f|\de\mu>\alpha\mu(P_x).\]
		Let $K$ be a compact subset of $E_\alpha$. Then $\{P_x\colon x\in K\}$ is a covering of $K$. Since $K$ is compact, there exists a subcovering $\mathcal{C}$ of $K$ made by a finite family of admissible cylinders. Since the family $\mathcal{C}$ is finite, we can extract from it a subfamily $\mathcal{G}$ with the properties prescribed by Lemma \ref{vitali}. Then,
		\begin{equation*}
			\Vert f\Vert_1\geq \sum_{P\in \mathcal{G}}\int_P |f| d\mu\geq \alpha \sum_{P\in \mathcal{G}}\mu(P)=\frac{\alpha}{{C_2}}\sum_{P\in \mathcal{G}}\mu(\dil{P}{C_2})\geq \frac{\alpha}{{C_2}}\mu(K).
		\end{equation*}
		By the inner regularity of the measure, passing to the supremum over all compact subsets $K$ of $E_\alpha$ we obtain the desired result.
	\end{proof}

	\section{The CZP for the \texorpdfstring{distance $d_Z$}{the flow distance}}\label{sec: CZP vertical flow}
	
	In this section we prove our main result, Theorem \ref{main}, by showing that for any $Z\in\mathcal{Z}$ and any $\mu\in\mathcal{F}_Z$ the family $\mathcal D^Z$ of dyadic admissible cylinders constructed in Theorem \ref{thm dyadic partitions} is a CZ family for $(G,d_Z,\mu)$. Here $d_Z$ denotes a metric on $G$ which takes into account the action of the vector field $Z$, defined by
	\begin{equation}\label{distZ}
		d_Z((n,a),(n',a')):=d_G\left(\bigl(nn\left(\log a\right)^{-1},a\bigr), \bigl(n'n\left(\log a'\right)^{-1},a'\bigr)\right),  \end{equation}
	for every $(n,a),(n',a')\in G$. We call this metric a $Z-$flow metric.
	
	Observe that the symmetry of $d_Z$ follows from the symmetry of $d_G$, as well as the triangular inequality. If $d_Z((n,a),(n',a'))=0$, then $a=a'$ and
	\[nn\left(\log a\right)^{-1}=n'n\left(\log a'\right)^{-1}=n'n\left(\log a\right)^{-1}, \]
	which implies $n=n'$.
	
	We shall denote by $B_G^Z(x,R)$ the ball centered at $x\in G$ with radius $R>0$ with respect to $d_Z$.
	
	\begin{lem}\label{lem 1}
		There exists a constant $C_3>0$ such that for every admissible cylinder $P=P_{r,Q}(a)$,
		\begin{equation*}%\label{eqlem1}
			P\subset B_G^Z\bigl((n_Q,1)\exp((\log a)Z),C_3\log r\bigr), 
		\end{equation*}
		where $n_Q$ is defined in Theorem~\ref{christ}~$(iii)$.
	\end{lem}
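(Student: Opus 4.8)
The goal is to show that every admissible cylinder $P=P_{r,Q}(a)$ sits inside a $d_Z$-ball centered at $(n_Q,1)\exp((\log a)Z)$ of radius proportional to $\log r$. The natural strategy is to reduce the $d_Z$-estimate to a $d_G$-estimate via the very definition \eqref{distZ}, and then use the explicit formula \eqref{CC distance} from Proposition \ref{prp:distance} together with the eccentricity control $Q\subset B_N(n_Q,C_1\delta^k)$ from Theorem \ref{christ}~$(iii)$ and the admissibility constraints relating $\delta^k$, $a$ and $r$. First I would take a generic point $x=(n,1)\exp(tZ)\in P$, so that $n\in Q$ and $t\in U_r(a)$, hence $x=(n\,n(t),e^t)$. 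Applying the map $(m,b)\mapsto (m\,n(\log b)^{-1},b)$ that appears in \eqref{distZ} sends $x$ to $(n,e^t)$, because $\log(e^t)=t$, and it sends the center $(n_Q,1)\exp((\log a)Z)=(n_Q\,n(\log a),a)$ to $(n_Q,a)$. Therefore
\[
d_Z\bigl(x,(n_Q,1)\exp((\log a)Z)\bigr)=d_G\bigl((n,e^t),(n_Q,a)\bigr).
\]

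The main step is then to bound this $d_G$-distance using \eqref{CC distance}:
\[
\cosh\bigl(d_G((n,e^t),(n_Q,a))\bigr)=\cosh\Bigl(\log\tfrac{e^t}{a}\Bigr)+\frac{1}{2ae^t}d_N(n,n_Q)^2.
\]
For the first term, since $t\in U_r(a)=(\log(a/r),\log(ar))$ we have $|\log(e^t/a)|=|t-\log a|<\log r$, so $\cosh(\log(e^t/a))\le\cosh(\log r)$. For the second term, I would use $d_N(n,n_Q)<C_1\delta^k$ (eccentricity) and the admissibility bounds. In the large case, $ar^2\le\delta^k$, i.e. $\delta^k\ge ar^2$; combined with $e^{t}>a/r$ this needs a bit of care since a large $\delta^k$ makes the bound $d_N(n,n_Q)<C_1\delta^k$ weak — but in fact for points of $P$ what matters is $n\in Q$ and the relevant upper bound on $\delta^k$ is $\delta^k\le\lambda ar^\gamma$, so $d_N(n,n_Q)^2<C_1^2\lambda^2 a^2r^{2\gamma}$, and $1/(2ae^t)<r/(2a^2)$, giving $d_N(n,n_Q)^2/(2ae^t)<\tfrac12 C_1^2\lambda^2 r^{2\gamma+1}$. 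Hence $\cosh(d_G)\le\cosh(\log r)+\tfrac12 C_1^2\lambda^2 r^{2\gamma+1}\lesssim r^{2\gamma+1}$, so $d_G\lesssim\log r$ by $\cosh x\ge e^{x}/2$. In the small case $1<r\le e$, one uses instead $\delta^k\le\lambda ae^2\log r$, so $d_N(n,n_Q)^2/(2ae^t)\lesssim a^{-2}e^t{}^{-1}\cdot a^2(\log r)^2\lesssim (\log r)^2\cdot r\lesssim\log r$ (since $\log r\le 1$ and $r\le e$), and $\cosh(\log(e^t/a))\le\cosh(\log r)=1+O((\log r)^2)$, so $\cosh(d_G)\le 1+C\log r$, which gives $d_G\lesssim\sqrt{\log r}$; but since $\log r\le 1$ this is still $\lesssim\sqrt{\log r}$, which is \emph{not} $\lesssim\log r$ — so here I would be more careful and keep the bound $d_G\le C_3\log r$ only after checking that in the small regime one actually has the sharper estimate, perhaps absorbing the defect into the constant by noting $\sqrt{\log r}\le\log r$ fails near $r=1$; the honest route is to observe $\cosh t-1\approx t^2/2$ for small $t$, so $d_G^2\lesssim\log r$ meaning $d_G\lesssim\sqrt{\log r}$, and then realize the statement as written with radius $C_3\log r$ forces us to re-examine whether the intended claim in the small case uses a different normalization — I would resolve this by showing $d_N(n,n_Q)^2/(2ae^t)\lesssim (\log r)^2$ (not just $\log r$), which does hold since $r$ is bounded, yielding $\cosh(d_G)-1\lesssim(\log r)^2$ and hence $d_G\lesssim\log r$.

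Putting the two regimes together yields a single constant $C_3$ depending only on $C_1$, $\lambda$, $\gamma$, and the proof is complete: for every $x\in P$,
\[
d_Z\bigl(x,(n_Q,1)\exp((\log a)Z)\bigr)=d_G\bigl((n,e^t),(n_Q,a)\bigr)\le C_3\log r,
\]
so $P\subset B_G^Z\bigl((n_Q,1)\exp((\log a)Z),C_3\log r\bigr)$.

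**Main obstacle.** The delicate point is the \emph{small} admissible case: there $\log r$ is small and the crude estimate $\cosh d_G-1\le C\log r$ only gives $d_G\lesssim\sqrt{\log r}$, which is too weak. The fix is to extract the extra power of $\log r$ by tracking the factor $d_N(n,n_Q)^2\le C_1^2\delta^{2k}\le C_1^2\lambda^2a^2e^4(\log r)^2$ and $1/(2ae^t)\le r/(2a^2)\le e/(2a^2)$, so that the hyperbolic term is $O((\log r)^2)$, and the first term $\cosh(\log(e^t/a))-1\le\cosh(\log r)-1=O((\log r)^2)$ as well; hence $\cosh d_G-1=O((\log r)^2)$ and therefore $d_G=O(\log r)$ as required. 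The large case is comparatively routine once one uses the upper admissibility bound $\delta^k\le\lambda ar^\gamma$ rather than the lower one.
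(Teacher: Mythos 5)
Your proof is correct and follows essentially the same route as the paper: reduce $d_Z$ to $d_G$ via \eqref{distZ}, apply \eqref{CC distance}, bound $d_N(n,n_Q)$ by $C_1\delta^k$ together with the upper admissibility bound ($\delta^k\le\lambda ar^\gamma$ in the large case, $\delta^k\le\lambda ae^2\log r$ in the small case), and treat the two regimes separately. Your resolution of the small case, showing the hyperbolic term is $O((\log r)^2)$ so that $\cosh d_G-1\lesssim(\log r)^2$ and hence $d_G\lesssim\log r$, is exactly the estimate the paper uses (there phrased as $\cosh(\log r)+C(\log r)^2 r\le\cosh(C'\log r)$).
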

	\begin{proof}
		Let $x=(n,1)\exp(tZ)$ be a point in $P$. By~\eqref{CC distance} and the fact that $e^t\in (a/r,ar)$, we get
		\begin{align*}    
			\cosh&\Bigl(d_Z\bigl(x,(n_Q,1)\exp((\log a)Z)\bigr)\Bigr)=\cosh\Bigl(d_G\bigl((n,e^t),(n_Q,a)\bigr)\Bigr)\\
			&=\cosh\left(\log\frac{e^t}{a}\right)+\frac{1}{2ae^t}d_N\left(n,n_Q\right)^2<\cosh(\log r)+\frac{d_N\left(n,n_Q\right)^2}{2a^2}  r.
		\end{align*}
		If $r>e$, since $P$ is admissible
		$d_N\left(n,n_Q\right)\leq C_1\delta^k\leq C_1 \lambda ar^\gamma$, by Theorem~\ref{christ}~$(iii)$. It follows that there exists $C>0$ such that for every $r>e$
		\begin{align*}    
			\cosh\Bigl(d_Z\bigl(x,(n_Q,1)\exp((\log a)Z)\bigr)\Bigr) {<\cosh(\log r)+\frac{C_1^2\lambda^2}{2}r^{2\gamma+1}\leq C} r^{2\gamma {+1}},
		\end{align*}
		{and then 
			\begin{align*}    
				d_Z\bigl(x,(n_Q,1)\exp((\log a)Z)\bigr)< {\rm arcosh}(C r^{2\gamma}\bigr)\leq \log(2C)+(2\gamma+1)\log r\lesssim\log r.
			\end{align*}
			If $1<r\leq e$, since $P$ is admissible $d_N\left(n,n_Q\right)\leq C_1\delta^k\leq C_1 \lambda ae^2\log r$, by Theorem~\ref{christ}~$(iii)$, 
			there exists $C>0$ such that 
			\begin{align*}
				\cosh\Bigl(d_Z\bigl(x,(n_Q,1)\exp((\log a)Z)\bigr)\Bigr)&< \cosh(\log r)+\frac{C_1^2e^4\lambda^2}{2}(\log r)^2r\\&\leq \cosh(C\log r).
			\end{align*} 
			So we proved}
		\begin{align*}    
			d_Z\bigl(x,(n_Q,1)\exp((\log a)Z)\bigr)\lesssim \log r,
		\end{align*}
		as desired.
	\end{proof}

	\begin{lem}\label{lem 2}
		Let $P=P_{r,Q}(a)$ be an admissible cylinder. We have that
		\begin{equation}\label{lem 7.1}
			P^*:=\{x\in G\colon d_Z(x,P)<\log r\}\subset P_{r^2,B_N(n_Q,C^*\delta^k)}(a),
		\end{equation} 
		where $ {C^*}=C_1+\sqrt{2}$. In particular,  if $D(\mu_N,C^*/c)>0$ is as in~\eqref{doubling}, then
		\[ \mu\left(P^*\right)\leq C_4\mu\left(P\right),\qquad\text{where }\quad C_4=2D\Bigl(\mu_N,\frac{C^*}{c}\Bigr).\]
	\end{lem}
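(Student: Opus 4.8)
The plan is to use that, by its very definition, $d_Z$ is the pull-back of $d_G$ under the bijection $\Phi\colon G\to G$, $\Phi(n,a):=\bigl(n\,n(\log a)^{-1},a\bigr)$, i.e. $d_Z(x,y)=d_G(\Phi(x),\Phi(y))$, and that $\Phi$ unwinds cylinders: since $(n,1)\exp(tZ)=(n\,n(t),e^t)$, one has $\Phi\bigl((n,1)\exp(tZ)\bigr)=(n,e^t)$, so $\Phi$ maps $P_{r,E}(a)$ onto the ``box'' $E\times(a/r,ar)\subset N\times A$. Hence $\Phi$ sends $P^*$ onto the $d_G$-neighbourhood of radius $\log r$ of $Q\times(a/r,ar)$, and the inclusion~\eqref{lem 7.1} becomes a direct estimate via the distance formula of Proposition~\ref{prp:distance}.

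Concretely, I would fix $x=(m,b)\in P^*$ and a point $p=(n,1)\exp(tZ)\in P$ (so $n\in Q$, $t\in U_r(a)$) with $d_Z(x,p)<\log r$, and set $m':=m\,n(\log b)^{-1}$. Applying Proposition~\ref{prp:distance} to $\Phi(x)=(m',b)$ and $\Phi(p)=(n,e^t)$ gives
\[\cosh\Bigl(\log\tfrac{b}{e^t}\Bigr)+\frac{1}{2be^t}\,d_N(m',n)^2=\cosh\bigl(d_Z(x,p)\bigr)<\cosh(\log r).\]
Bounding the second (nonnegative) summand below by $0$ yields $|\log b-t|<\log r$, which together with $|t-\log a|<\log r$ gives $\log b\in U_{r^2}(a)$. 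Bounding the first summand below by $1$ yields $d_N(m',n)^2<2be^t\bigl(\cosh(\log r)-1\bigr)$, and the estimates $e^t<ar$, $b<e^tr<ar^2$ give $d_N(m',n)^2<2a^2r^3\bigl(\cosh(\log r)-1\bigr)$. At this point I split cases according to the admissibility of $P$: if $P$ is large ($r>e$) one uses $\cosh(\log r)-1<r/2$ and $ar^2\le\delta^k$; if $P$ is small ($1<r\le e$) one uses the elementary bound $\cosh s-1\le s^2$ valid for $0\le s\le1$ together with $ae^2\log r\le\delta^k$. In both cases a short computation yields $d_N(m',n)<\sqrt2\,\delta^k$, and since $d_N(n,n_Q)\le C_1\delta^k$ by Theorem~\ref{christ}~$(iii)$ (as $n\in Q$) we get $m'\in B_N(n_Q,C^*\delta^k)$ with $C^*=C_1+\sqrt2$. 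As $x=(m',1)\exp\bigl((\log b)Z\bigr)$ and $\log b\in U_{r^2}(a)$, this is exactly the assertion $x\in P_{r^2,B_N(n_Q,C^*\delta^k)}(a)$, which proves~\eqref{lem 7.1}.

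For the measure bound I would use monotonicity of $\mu$ and then Proposition~\ref{prop-cyl}~$(vi)$ to write $\mu(P^*)\le\mu\bigl(P_{r^2,B_N(n_Q,C^*\delta^k)}(a)\bigr)=4\log r\,\mu_N\bigl(B_N(n_Q,C^*\delta^k)\bigr)$. Writing $C^*\delta^k=\tfrac{C^*}{c}\,c\delta^k$ and invoking the doubling property~\eqref{doubling} of $\mu_N$ together with the inclusion $B_N(n_Q,c\delta^k)\subset Q$ from Theorem~\ref{christ}~$(iii)$ gives $\mu_N\bigl(B_N(n_Q,C^*\delta^k)\bigr)\le D(\mu_N,C^*/c)\,\mu_N(Q)$. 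Since $\mu(P)=2\log r\,\mu_N(Q)$ again by Proposition~\ref{prop-cyl}~$(vi)$, this yields $\mu(P^*)\le 2D(\mu_N,C^*/c)\,\mu(P)=C_4\,\mu(P)$.

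The only mildly delicate point is the case distinction used to bound $d_N(m',n)$: the admissibility inequalities differ for large and small cylinders, and they must be tracked precisely enough that the coefficient of $\delta^k$ comes out to be exactly $\sqrt2$, so that $C^*=C_1+\sqrt2$ as claimed; the inequality $\cosh s-1\le s^2$ on $[0,1]$ is what makes the small-cylinder estimate close. Everything else is routine manipulation of the definition of $d_Z$ and of the product structure of the cylinders.
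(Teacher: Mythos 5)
Your proposal is correct and follows essentially the same route as the paper: parametrize points in flow coordinates (your map $\Phi$ is just the definition of $d_Z$ unpacked), apply the distance formula of Proposition~\ref{prp:distance} to get the vertical containment $\log b\in U_{r^2}(a)$ and the bound $d_N(m',n)<\sqrt2\,\delta^k$ via the large/small admissibility split, then conclude with Theorem~\ref{christ}~$(iii)$, Proposition~\ref{prop-cyl}~$(vi)$ and doubling. The only (harmless) cosmetic difference is that you use $\cosh s-1\le s^2$ on $[0,1]$ in the small case where the paper uses the $\sinh$ half-angle identity.
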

	\begin{proof}
		Let $x=(n,1)\exp(tZ)\in P^*$ and $y=(m,1)\exp(s Z)\in P$ such that $d_Z(x,y)<\log r$. Then,
		\begin{align}\label{stima}
			\cosh(\log r)&> \cosh\left(d_Z(x,y)\right)=\cosh\Bigl(d_G    \bigl((n,e^t),(m,e^s)\bigr)\Bigr)\nonumber\\  
			&=\cosh\left(|t-s|\right)+\frac{1}{2e^{t+s}}d_N(n,m)^2.
		\end{align}
		%\ML{Fix $x\in P^*$ and let $y=(m,1)\exp(sZ)$ a point in $P$ such that $d_Z(x,y)<\log r$.}
		On the one hand, this gives $|t-s|<\log r$, which implies $t\in U_{r^2}(a)$. On the other hand, if $P$ is large admissible, from \eqref{stima} one gets
		\begin{equation*}    d_N\left(n,m\right)^2<2e^{t+s}\cosh(\log r)< 2a^2r^4\leq 2\delta^{2k},
		\end{equation*}
		while if $P$ is small admissible, again starting from \eqref{stima} but using a finer estimate we get
		\begin{align*}       d_N\left(n,m\right)^2&<2e^{t+s}\left(\cosh(\log r)-1\right)=4e^{t+s}\left(\sinh\left(\frac{\log r}{2}\right)\right)^2\\
			&< 4a^2r^3\log^2 r\leq\frac{4}{e}(ae^2\log r)^2\leq\frac{4}{e}\delta^{2k}.
		\end{align*}
		Hence, for any admissible $P$, $d_N\left(n,m\right)\leq \sqrt{2}\delta^k$. Since $m\in B_N(n_Q,C_1\delta^k)$, it follows that $n\in B_N(n_Q,(C_1+\sqrt{2} )\delta^k)$. This proves~\eqref{lem 7.1}. The inequality involving the measures now simply follows from Proposition~\ref{prop-cyl}~$(vi)$,~\eqref{doubling}, and Theorem~\ref{christ}~$(iii)$. Indeed
		\begin{align*}
			\mu(P^*)&\leq 4\log r \mu_N(B_N(n_Q,C^*\delta^k))\\
			&\leq 4\log r D\Bigl(\mu_N,\frac{C^*}{c}\Bigr) \mu_N(B_N(n_Q,c\delta^k))\\
			&\leq 
			4\log r D\Bigl(\mu_N,\frac{C^*}{c}\Bigr)\mu_N(Q)=2D\Bigl(\mu_N,\frac{C^*}{c}\Bigr)\mu(P).
		\end{align*}
	\end{proof}
	
	We are now ready to complete the proof of our main result.
	
	\begin{proof}[Proof of Theorem \ref{main}]
		Set $C_0=\max\{C_1,C_3,C_4\}$ and let $P_j=P_{r_j,Q_j}(a_j)$ be the sets in $\mathcal{D}^Z$ for which, according to Theorem \ref{CZdec}, items (a), (b) and (c) of Definition~\ref{def CZP} hold. By means of Lemma~\ref{lem 1}, we know that
		\begin{itemize}
			\item[(d)] $P_j\subset B_G^Z(x_j,C_0R_j)$,
		\end{itemize}
		with $x_j=(n_{Q_j},1)\exp((\log a)Z)$ and $R_j=\log r_j$. Now, let $P_j^*=\{x: \ d_Z(x,P_j)<R_j\}$. By Lemma~\ref{lem 2} and the construction of $P_j$'s in Theorem~\ref{CZdec}, we get
		\[\sum_j\mu(P_j^*)\leq C_4\sum_j\mu(P_j)\leq \frac{C_4}{\alpha}\sum_j\int_{P_j}|f|\de \mu\leq \frac{C_4}{\alpha}\|f\|_1, \]
		which is item $(e)$ in Definition~\ref{def CZP}.
	\end{proof}
	
	It is immediate to see that, for any $Z\in\mathcal{Z}$ and any $\mu\in\mathcal{F}_Z$, the space $(G,d_Z,\mu)$ with the family $\mathcal{D}^Z$ satisfies what in \cite[Definition 3.3]{MOV} is called condition (C) (take $\mathcal{R}'=\mathcal{R}=\mathcal{D}^Z$ in that definition). This implies that one can introduce a suitable Hardy space $H^1(\mu)$ and a corresponding space $BMO(\mu)$ as in \cite{vallarino_tesi}. By \cite[Theorem 1.2]{HS} see also \cite[Theorem 3.2]{MOV} and \cite[Theorem 3.10]{vallarino_tesi}),
	as an immediate consequence of Theorem \ref{main} we have the following result concerning the boundedness of a class of integral operators on $(G,d_Z,\mu)$.
	
	\begin{thm}\label{integral operators 1}
		Let $Z\in\mathcal{Z}$ and $\mu\in\mathcal{F}_Z$.
		%Let $(X,d,μ)$ be a CZ-space. 
		Let $T=\sum_{j\in\mathbb Z} T_j$ be a linear operator bounded on $L^2(\mu)$, where the $T_j's$ are integral operators with kernel $K_j$ and the series converges in the strong operator topology on $L^2(\mu)$. Assume that there exist positive constants $b, B, \varepsilon$ and $C > 1$ such that
		\begin{equation*}
			\begin{split}
				&\int_G|K_j(x,y)| (1+C^j d_Z(x,y))^\varepsilon d\mu(x)\leq B, \quad  y\in G,\\
				&\int_G |K_j(x,y)-K_j(x,z)|d\mu(x)\leq B (C^j d_Z(y,z))^b \quad  y,z\in G.
			\end{split}
		\end{equation*}
		Then $T$ extends from $L^1(\mu)\cap L^2(\mu)$ to an operator of weak type $(1,1)$, bounded on $L^p(\mu)$, for $1 < p \leq 2$ and bounded from $H^1(\mu)$ to $L^1(\mu)$.
	\end{thm}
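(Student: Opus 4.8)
The plan is to obtain Theorem~\ref{integral operators 1} as an application of the abstract Calderón--Zygmund theory for singular integrals on spaces enjoying the CZP, developed by Hebisch and Steger in \cite{HS} and further elaborated in \cite{MOV} and \cite{vallarino_tesi}. By Theorem~\ref{main} the triple $(G,d_Z,\mu)$ has the CZP with respect to the family $\mathcal D^Z$, so what remains is to verify that the additional structural hypotheses required by those abstract statements are in force here.

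Concretely, I would first check that $(G,d_Z,\mu)$ together with $\mathcal D^Z$ satisfies condition~\condC\ of \cite[Definition~3.3]{MOV}, taking $\mathcal R=\mathcal R'=\mathcal D^Z$ there. This is immediate from what has already been proved: the dyadic hierarchy and the doubling-type bound $\mu(p(P))\le C_1\mu(P)$ are part of Theorem~\ref{thm dyadic partitions}, the control $\mu(P^*)\le C_4\mu(P)$ on the enlarged sets $P^*$ is Lemma~\ref{lem 2}, and the associated Calderón--Zygmund decomposition of $L^1(\mu)$ is Theorem~\ref{CZdec}. With condition~\condC\ available one introduces, following \cite{vallarino_tesi}, the Hardy space $H^1(\mu)$ and the space $BMO(\mu)$ adapted to $\mathcal D^Z$, so that the statement to be proved makes sense.

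At this point the conclusion is exactly \cite[Theorem~1.2]{HS} (see also \cite[Theorem~3.2]{MOV} and \cite[Theorem~3.10]{vallarino_tesi}): under the stated size and Hörmander-type regularity bounds on the kernels $K_j$, a sum $T=\sum_{j\in\mathbb Z}T_j$ that converges in the strong operator topology and is bounded on $L^2(\mu)$ is of weak type $(1,1)$; the $L^p(\mu)$ bounds for $1<p\le 2$ then follow by Marcinkiewicz interpolation with the $L^2$ estimate, and the $H^1(\mu)\to L^1(\mu)$ bound by the usual atomic argument.

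Should one prefer a self-contained proof of the weak type $(1,1)$ bound, the route is the classical one: given $\alpha>0$ apply the decomposition $f=g+\sum_j b_j$ of Theorem~\ref{CZdec}, handle the good function through $\mu(\{|Tg|>\tfrac\alpha2\})\le 4\alpha^{-2}\|Tg\|_2^2\lesssim \alpha^{-1}\|f\|_1$ using $|g|\le C_0\alpha$ and $\|g\|_1\lesssim\|f\|_1$, and, using property~(e) of Definition~\ref{def CZP} to discard the enlarged sets $P_j^*$, reduce the bad part to estimating $\sum_j\int_{G\setminus P_j^*}|Tb_j|\,\de\mu$. The genuine difficulty lies here: since $T_j$ is localised at scale $C^{-j}$ while $b_j$ lives on a cylinder of scale comparable to $\log r_j$, one must split the sum over $j$ at $C^{-j}\approx\log r_j$, invoke the size condition on $K_j$ in the ``large scale'' regime and the cancellation $\int_{P_j}b_j\,\de\mu=0$ together with the regularity condition on $K_j$ in the ``small scale'' regime, and then sum the two resulting geometric series uniformly in $j$. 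As this computation is carried out in the cited references, I would simply appeal to them.
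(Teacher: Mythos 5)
Your proposal is correct and follows essentially the same route as the paper: the paper also deduces Theorem~\ref{integral operators 1} directly from Theorem~\ref{main}, after noting that $(G,d_Z,\mu)$ with $\mathcal D^Z$ satisfies condition (C) of \cite[Definition 3.3]{MOV} and that $H^1(\mu)$ and $BMO(\mu)$ can be introduced as in \cite{vallarino_tesi}, and then invoking \cite[Theorem 1.2]{HS} (see also \cite[Theorem 3.2]{MOV} and \cite[Theorem 3.10]{vallarino_tesi}). Your extra sketch of a self-contained weak type $(1,1)$ argument goes beyond what the paper records but is consistent with the abstract machinery being cited.
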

	
	By duality, one can also obtain boundedness on $L^p(\mu)$ for $2\leq p<\infty$ and from $L^\infty$ to $BMO(\mu)$ for a class of operators whose kernels satisfy a dual version of the integral Hörmander condition appearing in the above theorem.

	\section{The CZP and the \texorpdfstring{$d_G$ metric}{Carnot--Cara\-th\'eodory}}\label{sec: CZP for dg}
	This section is devoted to discuss the relationship between the metric $d_Z$ introduced in the previous section and the \CaC metric $d_G$. In the first subsection we will see that, in the general case of $N$ nonabelian, $d_G=d_Z$ if $Z=X_0$, and if $N$ is Abelian, $d_Z$ is equivalent to $d_G$ for all $Z\in\mathcal{Z}$.
	This implies that for $Z=X_0$ (and any $\mu\in \mathcal{F}_{X_0}$) the metric $d_Z$ can be always substituted by $d_G$ in Theorem \ref{main}. If $N$ is Abelian, such a substitution can be made for any $Z\in\mathcal{Z}$. One may wonder if, indeed, also when $N$ is nonabelian $d_Z$ can be substituted by $d_G$ for any $Z\in \mathcal{Z}$. In the second subsection we
	prove that the answer to the above question is negative, even for $\mu=\rho$. This, somehow, justifies our use of the new metric $d_Z$, which turns out to be more adapted to the context.
	
	\subsection{Comparison with known results}
	Let $Z= X_0$ be the vertical vector field. In this case, $n(t)=1_N$ for every $t\in\mathbb R$ and then $d_Z=d_G$. Then, as a special case of Theorem \ref{main}, we immediately have the following result.
	
	\begin{thm}\label{mov generalized}
		The measure metric space $(G,d_G,\mu)$ has the CZP with respect to the family $\mathcal{D}^{X_0}$ for every $\mu\in \mathcal{F}_{X_0}$.
	\end{thm}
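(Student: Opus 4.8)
The plan is to deduce Theorem~\ref{mov generalized} directly from Theorem~\ref{main} by showing that the flow metric $d_{X_0}$ coincides with the \CaC metric $d_G$. First I would recall that $\exp(tX_0)=(1_N,e^t)$ for every $t\in\mathbb R$, so that in the notation introduced after the definition of $\mathcal Z$ one has $n(t)=1_N$ for all $t$. Plugging this into the defining formula \eqref{distZ} for $d_Z$ with $Z=X_0$, the inner translations $n\mapsto n\,n(\log a)^{-1}$ reduce to the identity, whence $d_{X_0}((n,a),(n',a'))=d_G((n,a),(n',a'))$ for all $(n,a),(n',a')\in G$; in other words $d_{X_0}=d_G$ as metrics on $G$.

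Next I would simply invoke Theorem~\ref{main} with the choice $Z=X_0$: for every $\mu\in\mathcal F_{X_0}$, the space $(G,d_{X_0},\mu)$ has the CZP with respect to the family $\mathcal D^{X_0}$. By the identification $d_{X_0}=d_G$ this is precisely the statement that $(G,d_G,\mu)$ has the CZP with respect to $\mathcal D^{X_0}$, which is the content of the theorem.

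I do not expect a genuine obstacle here: the assertion is an immediate corollary of the main theorem, the only thing to check being the harmless identity $n(t)\equiv 1_N$ that collapses $d_{X_0}$ onto $d_G$. It is perhaps worth remarking, as a sanity check, that since the right Haar measure $\rho$ belongs to $\mathcal F_{X_0}$ (the density $\varphi\equiv 1$ trivially satisfies \eqref{invar def}), this recovers and strictly generalizes \cite[Theorem~3.20]{MOV}, where only the case $\mu=\rho$ was treated.
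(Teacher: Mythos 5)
Your proposal is correct and coincides with the paper's own argument: for $Z=X_0$ one has $n(t)=1_N$, so $d_{X_0}=d_G$ by \eqref{distZ}, and the theorem follows immediately as the special case $Z=X_0$ of Theorem~\ref{main}. Nothing further is needed.
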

	
	This result was previously known only for $\mu=\rho$ \cite[Theorem 3.20]{MOV}. Hence, Theorem \ref{mov generalized} can be considered, at the same time, a generalization and a new proof of the result by Martini, Ottazzi and Vallarino.
	
	The rest of the subsection is devoted to show that if $N$ is Abelian then the vector field $X_0$ in Theorem \ref{mov generalized} can be substituted by any $Z\in\mathcal{Z}$. 
	\begin{thm}\label{hs generalized}
		If $N=\mathbb R^m$, then the measure metric space $(G,d_G,\mu)$ has the CZP with respect to $\mathcal D^Z$ for every $Z\in\mathcal{Z}$ and every $\mu\in \mathcal{F}_Z$.
	\end{thm}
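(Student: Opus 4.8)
The plan is to derive Theorem~\ref{hs generalized} from Theorem~\ref{main} by showing that, when $N=\mathbb R^m$, the flow metric $d_Z$ is bi-Lipschitz equivalent to the \CaC metric $d_G$, with constants depending only on $Z$. First I would make the structure explicit: since $N$ is Abelian its stratification has step one, so $\mathfrak n=\mathfrak n_1=\mathbb R^m$, the dilation $D_a$ is multiplication by $a$, and $d_N$ is the Euclidean distance. Setting $V:=Z-X_0\in\mathfrak n\cong\mathbb R^m$ and solving the defining relation $\exp(tZ)=(n(t),e^t)$ yields $n(t)=(e^t-1)V$, so $(n,1)\exp(tZ)=(n+(e^t-1)V,e^t)$. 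Inserting this into~\eqref{distZ} and Proposition~\ref{prp:distance}, I would record, for all $(n,a),(n',a')\in G$, the identities
\begin{equation*}
\cosh d_G\big((n,a),(n',a')\big)=\cosh\log\tfrac a{a'}+\frac{|n-n'|^2}{2aa'},\qquad
\cosh d_Z\big((n,a),(n',a')\big)=\cosh\log\tfrac a{a'}+\frac{\big|(n-n')-(a-a')V\big|^2}{2aa'}.
\end{equation*}

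From these the equivalence is a short estimate. Using $\big|(n-n')-(a-a')V\big|^2\le 2|n-n'|^2+2|V|^2|a-a'|^2$ and the identity $\tfrac{|a-a'|^2}{2aa'}=\cosh\log\tfrac a{a'}-1$, one obtains $\cosh d_Z-1\le(3+2|V|^2)(\cosh d_G-1)$, and, symmetrically (writing $n-n'=\big[(n-n')-(a-a')V\big]+(a-a')V$), also $\cosh d_G-1\le(3+2|V|^2)(\cosh d_Z-1)$. I would then invoke the elementary fact that $\cosh s-1\le K(\cosh t-1)$, with $K\ge 1$, forces $s\le c_K\, t$ for some $c_K$ depending only on $K$ — which follows from the behaviour of $\arccosh(1+y)$ (comparable to $\sqrt y$ as $y\to0^+$ and to $\log(2y)$ as $y\to+\infty$) together with continuity on $(0,\infty)$. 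Applying this with $K=3+2|V|^2$ in both directions gives a constant $L=L(|Z-X_0|)\ge 1$ such that $L^{-1}d_G\le d_Z\le L\, d_G$ on $G\times G$.

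It then remains to observe that a CZ family for $(G,d_Z,\mu)$ is automatically a CZ family for $(G,d_G,\mu)$, with the constant multiplied by $L^2$. Indeed, given $f\in L^1(\mu)$ and $\alpha>0$, let $f=g+\sum_j b_j$, with sets $P_j\in\mathcal D^Z$, radii $r_j$ and centres $x_j$, be the decomposition provided by Theorem~\ref{main}, with CZP constant $C_0$; I would keep $g$, the $b_j$, the $P_j$ and the $x_j$ as they are, and merely replace each radius $r_j$ by $r_j/L$. Conditions (a)--(c) of Definition~\ref{def CZP} are independent of the metric, so they still hold; condition~(d) persists since $P_j\subset B_G^Z(x_j,C_0 r_j)\subset B_G^{d_G}(x_j,LC_0 r_j)=B_G^{d_G}\big(x_j,(L^2C_0)\,r_j/L\big)$; and condition~(e) persists since $d_G(x,P_j)<r_j/L$ forces $d_Z(x,P_j)\le L\, d_G(x,P_j)<r_j$, whence $\{x:d_G(x,P_j)<r_j/L\}\subset\{x:d_Z(x,P_j)<r_j\}$ and thus $\sum_j\mu\big(\{x:d_G(x,P_j)<r_j/L\}\big)\le\sum_j\mu\big(\{x:d_Z(x,P_j)<r_j\}\big)\le\tfrac{C_0}{\alpha}\|f\|_1$. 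I expect the main obstacle to lie in the previous paragraph — pinning down $n(t)$ and the exact form of $d_Z$, and especially extracting a genuinely \emph{global}, point-independent equivalence constant from the comparison at the level of $\cosh$, for which one must ensure that the elementary lemma on $\arccosh$ gives a bound valid at all scales, not just asymptotically. The transference step, by contrast, is soft, and it is worth noting that no enlargement of the neighbourhoods $P_j^*$ is required, precisely because one shrinks (rather than grows) the radii.
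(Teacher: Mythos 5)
Your argument is correct, and its overall architecture coincides with the paper's: the paper also deduces Theorem~\ref{hs generalized} from Theorem~\ref{main} together with the equivalence of $d_Z$ and $d_G$ when $N$ is Abelian (Proposition~\ref{dg equiv dz}), and it dismisses the transference step with the one-line remark that ``the CZP is invariant for equivalent metrics'' --- the step you spell out carefully (keeping $g$, $b_j$, $P_j$, $x_j$ and rescaling the radii) and which is indeed soft. Where you genuinely diverge is in how the equivalence is proved. The paper first shows that $d_Z$ is left-invariant, reduces to comparing $d_G\bigl((n,e^t),1_G\bigr)$ with $d_Z\bigl((n,e^t),1_G\bigr)$, obtains $\cosh d_Z\leq D\cosh d_G$ (and symmetrically) with $D=\max\{2|\beta|^2+1,2\}$, and then runs a three-regime analysis: $|t|$ large via $D\cosh x\leq\cosh(Dx)$, $|n|\to\infty$ at bounded $|t|$ where the ratio tends to $1$, and $(n,t)\to(0,0)$ via the asymptotics $d_G\sim\sqrt{t^2+|n|^2}$, $d_Z\sim\sqrt{t^2+|n+\beta t|^2}$ and boundedness of $\Phi_\beta$. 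You instead compare $\cosh d_Z-1$ with $\cosh d_G-1$ at arbitrary pairs of points, using $n(t)=(e^t-1)V$ (the paper's~\eqref{ntAbeliano}) and the identity $\frac{(a-a')^2}{2aa'}=\cosh\log\frac a{a'}-1$, and then a single global elementary lemma on $\arccosh(1+Ky)$ versus $\arccosh(1+y)$; subtracting $1$ is exactly what makes the comparison uniform at small scales, so no separate asymptotic analysis near the identity is needed. Your route yields an explicit global bi-Lipschitz constant $L=L(|Z-X_0|)$ and bypasses the reduction to the identity (and hence left-invariance) altogether; the paper's route, at the cost of the case analysis, records along the way the left-invariance of $d_Z$ in the Abelian case, a fact of independent interest that your proof does not produce. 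Both establish the theorem; your elementary lemma on $\arccosh$ is valid as sketched (the ratio $\arccosh(1+Ky)/\arccosh(1+y)$ is continuous on $(0,\infty)$ with finite limits $\sqrt K$ at $0^+$ and $1$ at $+\infty$, hence bounded), so I see no gap.
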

	
	This result was previously known only for $Z=X_0$ and for $\mu=\rho$ \cite[Lemma 5.1]{HS}. Hence, Theorem \ref{hs generalized} can be considered, at the same time, a generalization and a new proof of the result by Hebisch and Steger.
	
	We begin the discussion which will lead to the proof of Theorem \ref{hs generalized}. The semidirect group $G=\mathbb R^m\rtimes A$ is the affine group of $\mathbb R^{m+1}$ obtained from translations by vectors of $\mathbb R^m$ and by homogeneous dilations. The group $G$ can be realized as group of matrices in ${\rm GL}(m+1)$ as
	\[G=\Bigl\{g(n,a)=\begin{bmatrix}A&n\\0&1
	\end{bmatrix}\colon n\in\mathbb R^m,A=\diag(a,\dots,a)\in{\rm GL}(m),a\in A\Bigr\}, \]
	indeed the semidirect product is preserved by the matrix multiplication. The Lie algebra $\mathfrak g$ of $G$ is then
	\begin{equation*}
		\mathfrak g=\Bigl\{\begin{bmatrix}
			D&X\\
			0&0
		\end{bmatrix}\colon X\in\mathbb R^m,D=\diag(s,\dots,s)\in M(m),s\in\mathbb R\Bigr\}.
	\end{equation*}
	Furthermore, if we fix the canonical orthonormal basis $\breve{X}_1,\dots,\Breve{X}_m$ of $\mathfrak n_1=\mathfrak n\simeq\mathbb R^m$, then the \CaC metric on $N$ coincides with the Euclidean metric and the lifting $X_\ell$ of $\breve{X}_\ell$ is such that
	\[(X_\ell)_{ij}=\begin{dcases*}
		1&\text{if }$(i,j)=(\ell,m+1)$,\\
		0&\text{otherwise,}
	\end{dcases*}\]
	for every $1\leq \ell\leq m$. Let $d_G$ be the \CaC induced by the vector fields $X_0,X_1,\dots,X_m$, where $X_0|_{(n,a)}=X_0|_a$, $X_0=\partial_a\in\mathfrak a$.
	
	A vector field $Z\in\mathfrak g$ with $\langle Z,X_0\rangle=1$ has the form
	\begin{equation}\label{ZAbeliano}
		Z=X_0+\sum_{\ell=1}^m\beta_\ell X_\ell,\qquad\beta_1,\dots,\beta_m\in\mathbb R. 
	\end{equation}
	By direct computation, one can easily obtain that for every $t\in\mathbb R$
	\[\exp(tZ)=\begin{bmatrix}
		\diag(e^t,\dots,e^t)&(e^t-1)\beta\\0&0
	\end{bmatrix}, \]
	where $\beta^T:=(\beta_1,\dots,\beta_m)\in\mathbb R^m$. In particular, this means that 
	\begin{equation}\label{ntAbeliano}
		n(t)=(e^t-1)\beta.
	\end{equation}
	\begin{prop}\label{dg equiv dz}
		If $G=\R^m\rtimes A$ and $Z$ is as in~\eqref{ZAbeliano}, then the distance $d_Z$ is left-invariant and is equivalent to $d_G$.
	\end{prop}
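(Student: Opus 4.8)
The strategy is to compute $d_Z$ explicitly using the formula \eqref{distZ} together with the Carnot--Carathéodory formula \eqref{CC distance} of Proposition~\ref{prp:distance}, and the explicit form \eqref{ntAbeliano} of $n(t)$, and then to compare the resulting quantity with $d_G$ via elementary inequalities. First I would write, for $x=(n,a)$ and $x'=(n',a')$ in $G$, using \eqref{distZ} and \eqref{CC distance},
\begin{equation*}
\cosh\bigl(d_Z(x,x')\bigr)=\cosh\Bigl(\log\frac{a}{a'}\Bigr)+\frac{1}{2aa'}\bigl|n-n(\log a)-\bigl(n'-n(\log a')\bigr)\bigr|^2,
\end{equation*}
where $|\cdot|$ is the Euclidean norm on $\mathbb R^m$ (recall $d_N$ is Euclidean here). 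Substituting $n(\log a)=(a-1)\beta$ from \eqref{ntAbeliano}, the translation part becomes $n-n'-(a-a')\beta$. Comparing with $\cosh\bigl(d_G(x,x')\bigr)=\cosh(\log\frac{a}{a'})+\frac{1}{2aa'}|n-n'|^2$, the two differ only in the translation term, which is shifted by the vector $(a-a')\beta$.

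**Left-invariance.** For left-invariance of $d_Z$, I would check directly that the map $\Phi(n,a):=(n-n(\log a),a)=(n-(a-1)\beta,a)$ intertwines the group law on $G$ with a new group law, or more simply observe that $d_Z$ is by construction the pullback of $d_G$ under $\Phi$, and then verify that $\Phi$ conjugates left translations on $(G,\cdot)$ to left translations. Concretely, one computes $\Phi\bigl((m,b)(n,a)\bigr)$ and $\Phi(m,b)\cdot'\Phi(n,a)$ for a suitable group operation $\cdot'$, and checks they agree; since $d_G$ is left-invariant for $\cdot$, this yields that $d_Z$ is invariant under the $\cdot'$-translations, which after transport back through $\Phi^{-1}$ are exactly the original left translations of $G$. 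Alternatively — and this is likely cleaner — one shows that $\exp(tZ)$ acts by $d_Z$-isometries from Proposition~\ref{prop-cyl}(i)--(iii) type identities, and that the $N$-translations $(m,1)$ also act isometrically, and that together with $A$ these generate $G$; but the direct pullback computation is probably the most transparent.

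**Equivalence with $d_G$.** For the equivalence $d_Z\approx d_G$, since $\cosh$ is increasing on $[0,\infty)$, it suffices to compare the arguments of $\cosh$ via a two-sided bound on the translation terms. The point is that $|n-n'-(a-a')\beta|^2\le 2|n-n'|^2+2|\beta|^2(a-a')^2$ and symmetrically $|n-n'|^2\le 2|n-n'-(a-a')\beta|^2+2|\beta|^2(a-a')^2$, so it remains to absorb the extra term $\frac{|\beta|^2(a-a')^2}{2aa'}$ into $\cosh(\log\frac{a}{a'})$. Here I would use that $\frac{(a-a')^2}{aa'}=\frac{a}{a'}+\frac{a'}{a}-2=2\bigl(\cosh(\log\frac{a}{a'})-1\bigr)$, which shows the extra term is comparable to $\cosh(\log\frac{a}{a'})$ itself (up to the constant $|\beta|^2$), hence harmless. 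Putting these together gives $\cosh(d_Z)\le C\cosh(d_G)$ and $\cosh(d_G)\le C\cosh(d_Z)$ for a constant $C$ depending only on $|\beta|$; taking $\operatorname{arccosh}$ and using that $\operatorname{arccosh}$ is subadditive up to a bounded additive error (or that $\operatorname{arccosh}(Cs)\le \log(2C)+\operatorname{arccosh}(s)\lesssim \operatorname{arccosh}(s)$ for $s\ge 1$, together with comparing small and large values of the distance separately), one concludes $d_Z\approx d_G$.

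**Main obstacle.** The genuinely delicate point is not any single estimate but the passage from the clean multiplicative bound $\cosh(d_Z)\approx\cosh(d_G)$ to an additive comparison $d_Z\approx d_G$ of the distances themselves: for small distances $\cosh$ is flat (so a multiplicative comparison of $\cosh$ gives a much weaker comparison of the distances, $d^2$ vs $d'^2$ up to constants, which is fine) while for large distances $\cosh$ is exponential (so one must be careful that a multiplicative constant in $\cosh$ only costs an additive constant in the distance, which then needs to be compared against the distance being large). I would handle this by splitting into the regimes $d_G\le 1$ and $d_G\ge 1$ and treating each with the appropriate elementary estimate on $\cosh$ and $\operatorname{arccosh}$. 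Everything else is a routine computation.
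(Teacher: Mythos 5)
Your computation of $\cosh(d_Z)$ from \eqref{distZ}, \eqref{CC distance} and \eqref{ntAbeliano} is correct, and your treatment of left-invariance (the map $(n,a)\mapsto(n-(a-1)\beta,a)$ is a group automorphism of $G$ pulling $d_G$ back to $d_Z$) is fine and essentially equivalent to the paper's direct verification. The two-sided bound $\cosh(d_Z)\le C\cosh(d_G)$ and $\cosh(d_G)\le C\cosh(d_Z)$, with $C$ depending only on $|\beta|$, is also exactly what the paper derives. The problem is the final step, precisely at the point you yourself flag as delicate: for small distances your argument fails. From $\cosh(d_Z)\le C\cosh(d_G)$ with $C>1$ you do \emph{not} get ``$d_Z^2\lesssim d_G^2$ up to constants'': expanding, $1+d_Z^2/2\le C+C\,d_G^2/2$ only yields $d_Z^2\le 2(C-1)+C d_G^2$, and the additive constant $2(C-1)$ does not disappear as $d_G\to 0$. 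Indeed $d_G=0$ is compatible with $d_Z=\operatorname{arccosh}(C)>0$ under the multiplicative $\cosh$ bound alone, so no argument using only that bound can give $d_Z\lesssim d_G$ near the diagonal. The $\operatorname{arccosh}(Cs)\le\log(2C)+\operatorname{arccosh}(s)$ trick likewise only gives $d_Z\le d_G+\log(2C)$, which suffices for $d_G\ge 1$ but not for $d_G\le 1$.

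The gap is fixable, and in fact your own inequalities already contain the fix: the ``extra term'' is $2|\beta|^2\bigl(\cosh(\log\tfrac a{a'})-1\bigr)$, i.e.\ comparable to $\cosh(\log\tfrac a{a'})-1$, not merely to $\cosh(\log\tfrac a{a'})$. Keeping the $-1$ throughout, the same algebra gives the stronger bound $\cosh(d_Z)-1\le C\bigl(\cosh(d_G)-1\bigr)$ (and symmetrically), equivalently $\sinh(d_Z/2)\le\sqrt C\,\sinh(d_G/2)$, and this \emph{does} imply $d_Z\approx d_G$ in both regimes, since $x\le\sinh x\le x\cosh x$ handles $d_G\le 1$ and the exponential behaviour of $\sinh$ handles $d_G\ge 1$. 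The paper takes a different route to close the same hole: after reducing to distances from $1_G$ by left-invariance and disposing of the region $|\log a|\ge C$ via $D\cosh x\le\cosh(Dx)$ for $x\ge C$, it treats the remaining strip by the asymptotics $d_G((n,e^t),1_G)\sim\sqrt{t^2+|n|^2}$ and $d_Z((n,e^t),1_G)\sim\sqrt{t^2+|n+\beta t|^2}$ near the identity, where the ratio is controlled by the bounded function $\Phi_\beta(v)=(1+|v|^2)/(1+|v+\beta|^2)$, together with the limit as $|n|\to\infty$. Either repair works; as written, however, your small-distance step is not valid.
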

	\begin{proof}
		Let $(n,a),(n',a')\in G$. By~\eqref{distZ} and~\eqref{ntAbeliano}, we have that
		\begin{align*}
			\cosh\Bigl(d_Z&\bigl((n',a')^{-1}(n,a),1_G\bigr)\Bigr)=\cosh\left(d_Z\Bigl(\Bigl(\frac{n-n'}{a'},\frac a{a'}\Bigr),1_G\Bigr)\right)\\
			&=\cosh\Bigl(\log\Bigl(\frac a{a'}\Bigr)\Bigr)+\frac{a'}{2a}\Bigl|\frac{n-n'}{a'}-\Bigl(\frac a{a'}-1\Bigr)\beta\Bigr|^2\\
			&=\cosh\Bigl(\log\Bigl(\frac a{a'}\Bigr)\Bigr)+\frac{1}{2aa'}|n-n'+(a'-a)\beta|^2\\
			&=\cosh\Bigl(\log\Bigl(\frac a{a'}\Bigr)\Bigr)+\frac{1}{2aa'}|(n-\beta(a-1))-(n'-\beta(a'-1))|^2\\
			&=\cosh\Bigl(d_Z\bigl((n,a),(n',a')\bigr)\Bigr),
		\end{align*}
		which proves the left-invariance of $d_Z$.
		
		Since both $d_G$ and $d_Z$ are left-invariant, to prove that they define equivalent metrics it is sufficient to check that there exists a constant $D\geq 1$ such that
		\begin{equation*}%\label{claim eucl}
			D^{-1} d_Z\bigl((n,a),1_G\bigr)\leq d_G\bigl((n,a),1_G\bigr) \leq D d_Z\bigl((n,a),1_G)\bigr),
		\end{equation*}
		for every $(n,a)\in G$.
		For simplicity, we apply the change of coordinate $t=\log a$. Observe that by~\eqref{ntAbeliano}
		\begin{equation*}
			\frac{|n(t)|^2}{e^t}=\frac{|\beta|^2(e^t-1)^2}{e^t}=2|\beta|^2(\cosh{t}-1).
		\end{equation*}

		Then, by the definition of $d_Z$ and equation~\eqref{CC distance},
		\begin{equation*}
			\begin{split}
				\cosh\Big(d_Z((n,e^t),1_G)\Big)&=\cosh\Big(d_G((n-n(t),e^t),1_G)\Big)=\cosh(t)+\frac{|n-n(t)|^2}{2e^t}\\
				&\leq \cosh(t)+\frac{|n|^2}{e^t}+\frac{|n(t)|^2}{e^t}\leq D\cosh\Big(d_G((n,e^t),1_G)\Big),
			\end{split}
		\end{equation*}
		where $D=\max\{2|\beta|^2+1,2\}$. On the other hand,
		\begin{equation*}
			\begin{split}
				\cosh\Big(d_G((n,e^t),1_G)\Big)
				&\leq \cosh(t)+\frac{|n-n(t)|^2}{e^t}+\frac{|n(t)|^2}{e^t}\\
				&\leq D\cosh\Big(d_Z((n,e^t),1_G)\Big).
			\end{split}
		\end{equation*}
		Since $D\geq 2$, we can choose a constant $C$ such that $D\cosh{x}\leq \cosh{(D x)}$ for $x\geq C$. But if $|t|\geq C$, by~\eqref{CC distance} we have that both $d_G((n,e^t),1_G)$ and $d_Z((n,e^t),1_G)$ are greater or equal to $C$, and therefore
		\begin{equation*}
			\frac{1}{D}d_G((n,e^t),1_G)\leq d_Z((n,e^t),1_G)\leq D d_G((n,e^t),1_G), \quad |t|\geq C. 
		\end{equation*}
		It is also clear that
		\begin{equation*}
			\lim_{|n|\to \infty}\frac{d_Z((n,e^t),1_G)}{d_G((n,e^t),1_G)}=1, \quad |t|<C.
		\end{equation*}
		It remains to study the behavior of the ratio of the distances when $(n,t)$ tends to $(0,0)$. In this case we have the asymptotic estimates
		\begin{align*}
			d_G((n,e^t),1_G)&=\arccosh\Bigl(\cosh(t)+\frac 1{2e^t}|n|^2\Bigr)\\
			%&\sim \arccosh\Bigl(1+\frac{t^{2}+|n|^2}2\Bigr) 
			&\sim \sqrt{2(\cosh(t)-1)+|n|^2} 
			\sim\sqrt{t^2+|n|^2},
		\end{align*}
		and
		\begin{align*}
			d_Z((n,e^t),1_G)&=\arccosh\Bigl(\cosh(t)+\frac 1{2e^t}|n+\beta(e^t-1)|^2\Bigr)\sim \sqrt{t^2+|n+\beta t|^2}.
		\end{align*}
		Then, for $(n,t)\to(0,0)$ we have
		\begin{equation}\label{asint in 0}
			\frac{d_G((n,e^t),1_G)^2}{d_Z((n,e^t),1_G)^2}\sim\frac{t^2+|n|^2}{t^2+|n+\beta t|^2}=\begin{cases}
				1\ &t=0,\\
				\displaystyle \Phi_{\bm\beta}\left(\frac n t\right) &t\neq 0,
			\end{cases}
		\end{equation}
		where $\Phi_{\bm\beta}\colon\mathbb R^m\to [0,+\infty)$ is defined by
		\[\Phi_{\beta}(v):=\frac{1+|v|^2}{1+|v+\beta|^2},\qquad v\in\mathbb R^m.\]
		Since $\Phi_{\bm\beta}$ is bounded from below and from above by uniform positive constants on $\mathbb R^m$, the same is true for the left hand side in~\eqref{asint in 0} for $(n,t)$ in a compact neighbor of $(0,0)$.

	\end{proof}
	
	\begin{proof}[Proof of Theorem \ref{hs generalized}]
		Since the CZP is invariant for equivalent metrics, by Theorem \ref{main} and Proposition \ref{dg equiv dz} we immediately get the result.  
	\end{proof}
	
	\begin{oss}
		A similar discussion to that carried out above also applies to Theorem \ref{integral operators 1}, which was previously known to hold only for $d_Z=d_G$ and $\mu_Z=\rho$, as consequences of \cite[Lemma 5.1]{HS} (when $N$ is Abelian) and \cite[Theorem 3.20]{MOV} (when $N$ is nonabelian). Although we do not provide explicit examples here, we can conclude that Theorem \ref{integral operators 1} provides boundedness results for a wider class of integral operators than those covered by \cite{HS} and \cite{MOV}.
	\end{oss}

	\subsection{Optimality of Theorem \ref{mov generalized}}\label{sec:counterexample}
	In this section we show that in Theorem \ref{mov generalized}, in general $X_0$ cannot be substituted by \textit{any} $Z\in \mathcal{Z}$, even for $\mu=\rho$. To construct a counterexample, we will consider the extended Heisenberg group, where some recurrent quantities that were abstract so far, such as $n(t)$ and  $d_N(n,1_N)$, can be made explicit.
	In the first subsection we prove some geometric lemmas, of possibly independent interest, holding true for vector fields $Z\in \mathcal{Z}_1$, the subclass of $\mathcal{Z}$ made of vector fields with nonzero components only in the first layer of $\mathfrak g$. In the second subsection we recall some general facts about the extended Heisenberg group $\mathbb H^1_e$ and compute the relevant quantities in this setting. Then, we construct the counterexample on $({\mathbb{H}_e^1},d_{\mathbb{H}_e^1},\rho)$, using a vector field  $Z\in\mathcal{Z}_1$.
	
	\subsubsection{Geometric lemmas for $\mathcal Z_1$}
	
	We introduce the subfamily of vector fields of $G$ having no components in higher layers of $\mathfrak n$, i.e.
	\[\mathcal Z_1:=\{Z\in\mathcal Z\colon Z\in \mathfrak n_1\oplus \mathfrak a\subset \mathfrak g\}.\]
	In other words, for every $Z\in\mathcal Z_1$, there exist $\beta_1,\dots,\beta_{q_1}\in\mathbb C$ such that 
	\[Z=X_0+\sum_{i=1}^{q_1}\beta_iX_{1,i}.\]
	
	\begin{oss}
		For every $Z\in\mathcal Z_1$ and $t\in \mathbb{R}$ it holds 
		\begin{equation}\label{dist dal trasportato in G}
			d_G(\exp(tZ),1_G)\leq |t|\Vert Z\Vert,
		\end{equation}
		where $\Vert Z\Vert=\langle Z,Z\rangle^{1/2}$, and the inner product is defined in Section \ref{sec: preliminaries}.
		
		Indeed, consider the curve $\gamma\colon[0,t]\to G$ defined by $\gamma(s):=\exp(sZ)$, joining $1_G$ and $\exp(tZ)$ in $G$. Now,
		\[\dot\gamma(s_0)=\frac{\de}{\de s}\Big|_{s=s_0}\exp(sZ)=Z|_{\exp(s_0Z)}\in HG,\]
		by Theorem~3.31 in~\cite{warner} and the fact that $Z\in\mathcal Z_1$. Hence, $\gamma$ is an horizontal curve of $G$. By the fact that $Z|_{\exp (sZ)}=(\de L_{\exp(sZ)})_{1_G}(Z)$ and the left-invariance of the metric tensor, we have that
		\[d_G(1_G,\exp(tZ))\leq\int_0^t\|\dot\gamma(s)\|\de s=\int_0^t\|Z\|\de s=|t|\|Z\|.\]
		%Indeed, let $\gamma\colon[0,1]\to G$ be the curve defined by $\gamma(\tau):=\exp(\tau tZ)$. Clearly $\gamma$ joins $1_G$ and $\exp(tZ)$. Note that 
		%\[\dot\gamma(\tau_0)=\frac{\de}{\de \tau}\Big|_{\tau=\tau_0}(\exp(\tau tZ)) \]
	\end{oss}
	\begin{lem}\label{lemma controesempio 1}
		Let $Z\in \mathcal Z_1$. For every $R>0$,
		\begin{equation*}
			P_{e^{R/(2\Vert Z \Vert)},B_N(1_N,R/2)}(1)\subset B_G(1_G,R).
		\end{equation*}
	\end{lem}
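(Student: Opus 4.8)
The plan is to unwind the definition of the cylinder and reduce the inclusion to a pointwise distance estimate. By Definition~\ref{def cil} and~\eqref{Ur}, a generic point of $P_{e^{R/(2\|Z\|)},B_N(1_N,R/2)}(1)$ has the form $x=(n,1)\exp(tZ)$ with $d_N(n,1_N)<R/2$ and $t\in U_{e^{R/(2\|Z\|)}}(1)=\bigl(-\tfrac{R}{2\|Z\|},\tfrac{R}{2\|Z\|}\bigr)$, so that $|t|\,\|Z\|<R/2$. It therefore suffices to prove that $d_G(x,1_G)<R$ for every such $x$.

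To this end I would apply the triangle inequality for $d_G$ in the form
\[d_G\bigl((n,1)\exp(tZ),1_G\bigr)\le d_G\bigl((n,1)\exp(tZ),(n,1)\bigr)+d_G\bigl((n,1),1_G\bigr),\]
and bound the two summands separately. For the first, the left-invariance of $d_G$ gives $d_G\bigl((n,1)\exp(tZ),(n,1)\bigr)=d_G(\exp(tZ),1_G)$, and then the estimate~\eqref{dist dal trasportato in G} from the Remark above (which is available precisely because $Z\in\mathcal Z_1$) yields $d_G(\exp(tZ),1_G)\le|t|\,\|Z\|<R/2$. For the second, I would specialize Proposition~\ref{prp:distance}: taking $a=a'=1$ in~\eqref{CC distance} gives $\cosh\bigl(d_G((n,1),1_G)\bigr)=1+\tfrac12\,d_N(n,1_N)^2$; combining this with the elementary inequality $\cosh s\ge 1+s^2/2$ and the monotonicity of $\cosh$ on $[0,+\infty)$ forces $d_G\bigl((n,1),1_G\bigr)\le d_N(n,1_N)<R/2$. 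Summing the two bounds gives $d_G(x,1_G)<R$, i.e.\ $x\in B_G(1_G,R)$, which is the claim.

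I do not expect a genuine obstacle: the proof is a single two-term triangle inequality whose summands are both controlled by facts already established. The two points deserving a little care are (i) the bookkeeping with the interval $U_r(1)$, so that the choice $r=e^{R/(2\|Z\|)}$ yields exactly the constraint $|t|<R/(2\|Z\|)$ needed to absorb the factor $\|Z\|$; and (ii) the fact that the bound $d_G(\exp(tZ),1_G)\le|t|\,\|Z\|$ is tied to the hypothesis $Z\in\mathcal Z_1$, which is exactly what makes $s\mapsto\exp(sZ)$ a horizontal curve of $G$ --- for a general $Z\in\mathcal Z$ the curve need not be horizontal and one would have to estimate $n(t)$ directly. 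An alternative route for the second summand is to lift an almost length-minimizing horizontal curve of $N$ to $G$ at height $a=1$, which has the same length; but the $\cosh$ identity of Proposition~\ref{prp:distance} is shorter and keeps the argument self-contained.
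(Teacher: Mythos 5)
Your proposal is correct and follows essentially the same route as the paper's proof: the same two-term triangle inequality, the bound $d_G(\exp(tZ),1_G)\le|t|\,\|Z\|$ from the remark (valid since $Z\in\mathcal Z_1$), and the identity $\cosh\bigl(d_G((n,1),1_G)\bigr)=1+\tfrac12 d_N(n,1_N)^2$ combined with $\cosh s\ge 1+s^2/2$ to get $d_G((n,1),1_G)\le d_N(n,1_N)$. No issues to report.
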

	\begin{proof}
		Let $\eta>0$ and $x=(n,1)\exp(tZ)$, with $n\in B_N(1_N,\eta)$, $t\in U_r(1)$, be an arbitrary point in $P_{r,B_N(1_N,\eta)}(1)$. By the triangular inequality,
		\begin{equation*}
			d_G(x,1_G)\leq d_G(x,(n,1))+d_G((n,1),1_G).
		\end{equation*}
		Now, by the left-invariance of $d_G$,~\eqref{dist dal trasportato in G}, and the fact that $|t|< \log r$ for $t\in U_r(1)$,
		\begin{equation*}
			d_G(x,(n,1))=d_G(\exp(tZ),1_G)\leq \Vert Z \Vert|t|<\Vert Z \Vert\log r,
		\end{equation*}
		while by \eqref{CC distance},
		\begin{equation*}
			\cosh\big(d_G((n,1),1_G)\big)=1+\frac{1}{2}d_N(n,1_N)^2\leq 1+\frac{\eta^2}{2}\leq\cosh\eta.
		\end{equation*}
		Gluing all together and choosing $r=e^{R/(2\Vert Z \Vert)}$ and $\eta=R/2$ we get the desired result.
	\end{proof}

	\begin{lem}\label{lemma controesempio 2}
		Let $Z\in \mathcal Z_1$ and $P=P_{r,Q}(a)$, where $Q\in \mathcal Q$ is a Christ cube in $N$. For every $R>0$,
		\begin{equation*}
			P_{re^{R/(2\Vert Z \Vert)}, n_Q \cdot \Psi_{r,a}(B_N(1_N,R/2))}(a)\subset \{x\in G\colon d_G(x,P)<R\},
		\end{equation*}
		where $\Psi_{r,a}$ is defined in Proposition~\ref{prop-cyl}~$(v)$.
	\end{lem}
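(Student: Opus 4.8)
The plan is to prove a containment between cylinders and a metric ``enlargement'' of an admissible cylinder $P=P_{r,Q}(a)$. The right-hand side, $\{x\in G\colon d_G(x,P)<R\}$, is exactly the set of points that can be reached from some point of $P$ by a $d_G$-path of length $<R$. So it suffices to show that every point of the big cylinder $P_{re^{R/(2\|Z\|)},\,n_Q\cdot\Psi_{r,a}(B_N(1_N,R/2))}(a)$ lies within $d_G$-distance $R$ of $P$. The strategy is to realize such a point as $y\cdot x$ where $x\in P_{e^{R/(2\|Z\|)},B_N(1_N,R/2)}(1)\subset B_G(1_G,R)$ (by Lemma~\ref{lemma controesempio 1}) and $y\in P$, and then use left-invariance of $d_G$ to transfer the ball estimate.

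First I would recall the product formula for cylinders from Proposition~\ref{prop-cyl}~$(v)$: if $P_1=P_{r_1,E_1}(a_1)$ and $P_2=P_{r_2,E_2}(a_2)$ then $P_1P_2\supset P_{r_1r_2,\,E_1\cdot\Psi_{r_1,a_1}(E_2)}(a_1a_2)$. Apply this with $P_1=P_{r,\{n_Q\}}(a)$ (or, more naturally, with $P_1=P$ itself, noting $n_Q\in Q$ so $\{n_Q\}\subset Q$, hence $P_{r,\{n_Q\}}(a)\subset P$) and $P_2=P_{e^{R/(2\|Z\|)},B_N(1_N,R/2)}(1)$. Then $r_1r_2=re^{R/(2\|Z\|)}$, $a_1a_2=a$, and $E_1\cdot\Psi_{r_1,a_1}(E_2)=n_Q\cdot\Psi_{r,a}(B_N(1_N,R/2))$, so the left-hand cylinder in the statement is contained in $P_{r,\{n_Q\}}(a)\cdot P_{e^{R/(2\|Z\|)},B_N(1_N,R/2)}(1)\subset P\cdot B_G(1_G,R)$, where the last inclusion uses $P_{r,\{n_Q\}}(a)\subset P$ and Lemma~\ref{lemma controesempio 1}.

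It then remains to observe that $P\cdot B_G(1_G,R)\subset\{x\colon d_G(x,P)<R\}$. This is the routine left-invariance step: if $x=y\cdot z$ with $y\in P$ and $d_G(z,1_G)<R$, then $d_G(x,P)\le d_G(y z, y)=d_G(z,1_G)<R$, using the left-invariance of $d_G$ under multiplication by $y$. Chaining the two inclusions gives the claim. I expect the only genuine subtlety to be bookkeeping: making sure the parameters $(r_1,a_1)$ fed into $\Psi_{r_1,a_1}$ in the product formula are $(r,a)$ — the $\Psi$ in the statement is already written as $\Psi_{r,a}$, so this matches — and confirming that shrinking the base from $Q$ to $\{n_Q\}$ is legitimate (it is, since $n_Q\in Q$ by Theorem~\ref{christ}~$(iii)$, and base-set inclusion gives cylinder inclusion directly from Definition~\ref{def cil}). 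No hard estimate is involved beyond what Lemmas~\ref{lemma controesempio 1} and Proposition~\ref{prop-cyl}~$(v)$ already supply.
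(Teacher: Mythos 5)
Your proposal is correct and follows essentially the same route as the paper's proof: write $\{x\colon d_G(x,P)<R\}=P\,B_G(1_G,R)$ by left-invariance, insert the small cylinder $P_{e^{R/(2\Vert Z\Vert)},B_N(1_N,R/2)}(1)\subset B_G(1_G,R)$ from Lemma~\ref{lemma controesempio 1}, and apply Proposition~\ref{prop-cyl}~$(v)$. The only (harmless) cosmetic difference is that you take the singleton base $\{n_Q\}\subset Q$ directly, whereas the paper uses the ball $n_Q B_N(1_N,c\delta^k)\subset Q$ and then discards it via $1_N\in B_N(1_N,c\delta^k)$.
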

	\begin{proof}
		Let $k\in\mathbb Z$ be the generation of $Q$. Recalling that, by $(iii)$ of Theorem~\ref{christ}, $n_Q B_N(1_N,c\delta^k)\subset Q$ and applying first  Lemma \ref{lemma controesempio 1} and then Proposition \ref{prop-cyl}~$(v)$, we get
		\begin{equation*}
			\begin{split}
				\{x\in G\colon d_G(x,P)<R\}&=\bigcup_{x\in P}B_G(x,R)=\bigcup_{x\in P}xB_G(1_G,R)=PB_G(1_G,R)\\
				&\supset   P_{r,n_Q B_N(1_N,c\delta^k)}P_{e^{R/(2\Vert Z \Vert)},B_N(1_N,R/2)}(1)\\
				&\supset P_{re^{R/(2\Vert Z \Vert)}, n_Q B_N(1_N,c\delta^k)\cdot \Psi_{r,a}(B_N(1_N,R/2))}(a)\\
				&\supset P_{re^{R/(2\Vert Z \Vert)}, n_Q \cdot \Psi_{r,a}(B_N(1_N,R/2))}(a),
			\end{split}
		\end{equation*}
		where the last inclusion simply follows from the fact that $1_N\in B_N(1_N,c\delta^k)$.  
	\end{proof}

	\subsubsection{The counterexample}
	
	We consider the Heisenberg group $\HH^1$, that is $\R^3$ endowed with the product
	\[(q,p,\tau)\cdot_{\HH^1}(q',p',\tau'):=\Bigl(q+q',p+p',\tau+\tau'-\frac{1}{2}(qp'-pq')\Bigr).\]
	The neutral element is then $(0,0,0)$ and $(q,p,\tau)^{-1}=(-q,-p,-\tau)$, $\HH^1$ is nilpotent, hence unimodular. A Haar measure is $\de q\de p\de\tau$ and we denote by $|\cdot|$ the Haar measure of sets. 
	The Heisenberg group admits the following realization inside $\Sptwor$, namely
	\[N:=\Bigl\{n(q,p,\tau):=\begin{bmatrix}
		1&0&0&0\\
		p&1&0&0\\
		\tau&-q/2&1&-p\\
		-q/2&0&0&1\\
	\end{bmatrix}\colon q,p,\tau\in\R \Bigr\}\subset \Sptwor. \]
	The group law inherited from the matrix multiplication coincides with the classical group law of the one dimensional Heisenberg group%, that is by direct computation
	\begin{align*}
		n(q,p,\tau)n(q',p',\tau')&=n((q,p,\tau)\cdot_{\HH^1}(q',p',\tau')).
	\end{align*}
	The group $ A$ acts on $\HH^1$ via the dilations $D_a\colon \mathbb H^1\to\mathbb H^1$, defined for $a\in A$ by $D_a(q,p,\tau):=(aq,ap,a^2\tau)$. 
	In $\Sptwor$, such dilations coincide with the conjugation by the matrices $ A_a:=\diag(a^{-1},1,a,1)\in\Sptwor$, that is
	\[ A_an(q,p,\tau) A_a^{-1}=n(aq,ap,a^2\tau)=n(D_a(q,p,\tau)). \]
	The extended Heisenberg group is the semidirect product $\HH^1_e=\HH^1\rtimes A$, that is $\R^3\times  A$ endowed by the product
	\[(q,p,\tau;a)\cdot_{\HH^1_e}(q',p',\tau';a'):=((q,p,\tau)\cdot_{\HH^1}D_a(q',p',\tau');aa'),\]
	and it is realized in $\Sptwor$ by
	\begin{align*}
		G:&=\{{g(q,p,\tau;a):=}n(q,p,\tau) A_a^{-1}\colon q,p,\tau\in \R,a\in A\}\\
		&=\Bigl\{\begin{bmatrix}
			a^{-1}&0&0&0\\
			a^{-1}p&1&0&0\\
			a^{-1}\tau&-q/2 &a&-p\\
			-q/(2a)&0&0&1\\
		\end{bmatrix}\colon q,p,\tau\in\R,a\in A \Bigr\}\subset\Sptwor,
	\end{align*}
	endowed with the matrix multiplication. Indeed
	\begin{align*}
		g(q,p,\tau;a)g(q',p',\tau';a')&=n(q,p,\tau) A_a^{-1}n(q',p',\tau') A_a A_a^{-1} A_{a'}^{-1}\\
		&=n((q,p,\tau) \cdot_{\HH^1}D_a(q',p',\tau')) A_{aa'}^{-1}\\
		&=g((q,p,\tau;a)\cdot_{\HH^1_e}(q',p',\tau';a')).
	\end{align*}
	The group $G$ coincides with the classical shearlet group, see~\cite{DahDeMDeV} and~\cite{Shearlets}. 
	The Lie algebra of $\Sptwor$ is:
	\begin{align*}
		\mathfrak{sp}(2,\R)=&\{X\in\mathfrak{gl}(2,\R)\colon XJ+JX=0\},
	\end{align*}
	where $J$ is the 
	standard symplectic form
	\begin{equation*}
		J=\begin{bmatrix}
			0&I_2\\-I_2&0
		\end{bmatrix}.
	\end{equation*}

	The Lie algebra $\mathfrak{sp}(2,\mathbb R)$ is semisimple and has Cartan involution $\Theta X=-\phantom{}^tX$, relative to which it has the Cartan decomposition $\mathfrak{sp}(2,\mathbb R)=\mathfrak{k}+\mathfrak{p}$, where $\mathfrak{k}$ and $\mathfrak{p}$ are the $+1$ and $-1$ eigenspaces of $\Theta$, respectively. The standard maximal Abelian subspace of $\mathfrak p$ is
	\[\ga=\Bigl\{H_{a,b}:=\begin{bmatrix}
		-a&0&0&0\\0&-b&0&0\\0&0&a&0\\0&0&0&b
	\end{bmatrix}\colon a,b\in\R\Bigr\}.\]
	%\via{of the component $\gp$ of the Cartan decomposition of $\mathfrak{sp}(2,\R)$.} 
	The two linear operators on $\ga$ given by $\alpha(H_{a,b})=a-b$ and $\beta(H_{a,b})=2b$ provide a natural basis of simple roots. In fact,
	\begin{equation*}
		X_\alpha:=\begin{bmatrix}
			0&0&0&0\\1&0&0&0\\0&0&0&-1\\0&0&0&0
		\end{bmatrix},\quad X_\beta:=\begin{bmatrix}
			0&0&0&0\\0&0&0&0\\0&0&0&0\\0&1/2&0&0
		\end{bmatrix},
	\end{equation*}
	\begin{equation*}
		X_{\alpha+\beta}:=\begin{bmatrix}
			0&0&0&0\\0&0&0&0\\0&-1/2&0&0\\-1/2&0&0&0
		\end{bmatrix},\quad X_{2\alpha+\beta}:=\begin{bmatrix}
			0&0&0&0\\0&0&0&0\\1&0&0&0\\0&0&0&0
		\end{bmatrix}
	\end{equation*}
	satisfy for every $H\in\ga$
	\begin{align*}
		[H,X_\alpha]&=\alpha(H)X_\alpha;\\
		[H,X_\beta]&=\beta(H)X_\beta;\\
		[H,X_{\alpha+\beta}]&=(\alpha+\beta)(H)X_{\alpha+\beta};\\
		[H,X_{2\alpha+\beta}]&=(2\alpha+\beta)(H)X_{2\alpha+\beta}.
	\end{align*}
	Futhermore, $[X_\alpha,X_\beta]=X_{\alpha+\beta}$ and $[X_\alpha,X_{\alpha+\beta}]=X_{2\alpha+\beta}$.
	
	The Heisenberg algebra is 
	\[ \mathfrak n:=\Span\{X_\alpha,X_{\alpha+\beta},X_{2\alpha+\beta}\}\subset \mathfrak{sp}(2,\R).\]
	The extended Heisenberg algebra is 
	\begin{align*}
		\mathfrak g:&=\Span\{X_\alpha,X_{\alpha+\beta},X_{2\alpha+\beta},H_{1,0}\}\subset \mathfrak{sp}(2,\R).
	\end{align*}
	%For simplicity, from now on we rename the element of $\gh_e^1$ as $X=X_{\alpha+\beta}$, $Y=X_{\alpha}$, $T=X_{2\alpha+\beta}$, and $A=H_{1,0}$. 
	The Lie algebra $\mathfrak n$ is stratified with 2 layers and it has homogeneous dimension 4. The first layer is generated by $X_\alpha$ and $X_{\alpha+\beta}$. The \CaC metric associated to $X_\alpha$ and $X_{\alpha+\beta}$ is the left-invariant metric induced by the Koranyi norm
	\begin{equation}\label{Koranyi}
		\|(q,p,\tau)\|_{\mathbb H^1}^4=\frac 1{16}(q^2+p^2)^2+\tau^2,\qquad(q,p,\tau)\in \mathbb{H}^1,
	\end{equation}
	i.e.~$d_N((q,p,\tau),1_{\mathbb H^1})=\|(q,p,\tau)\|_{\mathbb H^1}$. We refer to~\cite{CapDaiPauTys} for more details on this distance.
	% We need a technical result that is a consequence of the Zassenhaus and Campbell-Hausdorff formulae.
	% \begin{lem}\label{ZCH}
	%     The following identities hold:
	%     \begin{enumerate}
	%         \item if $[X,Y]=tY$, then \[e^{X+Y}=e^{X}e^{\gamma(t)Y},\] where $\gamma(t)=\frac{1-e^{-t}}{t}$;
	%         \item if $[X,Y]$ is central, i.e.~it commutes with both $X$ and $Y$, then \[e^{X+Y}=e^Xe^Ye^{-\frac{[X,Y]}{2}}.\]
	%     \end{enumerate}
	% \end{lem}
	
	From now on, we shall adopt a little abuse of notation, by renaming $N$ and $G$ in $\Sptwor$ with $\mathbb H^1$ and $\mathbb H^1_e$, respectively, and by putting $(q,p,\tau)\in \mathbb H^1$ and $(q,p,\tau,a)\in \mathbb H^1_e$.
	
	The following geometric Lemma will be useful in the construction of our counterexample.

	\begin{lem}\label{coniugazione in Heisenberg}
		For any $R>0$ and $t\in \R$,
		\[B_{\mathbb{H}^1}(1_{\mathbb{H}^1},\tilde c e^tR)\subset \psi_t(B_{\mathbb{H}^1}(1_{\mathbb{H}^1},R)),\]
		% \via{\begin{equation*}    n(t)D_{e^t}\Bigl(B_N\Bigl(1_N,R\Bigr)\Bigr)n(t)^{-1}\supset B_N\Bigl(1_N,\tilde c e^t\frac{R}{2}\Bigr),
		%\end{equation*}}
		where $\psi_t$ is defined in Proposition~\ref{prop-cyl} and $\tilde c^4=1/20$.
	\end{lem}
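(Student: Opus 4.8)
The plan is to make the map $\psi_t$ completely explicit in the matrix coordinates $(q,p,\tau)$ of $\mathbb H^1$ and then to reduce the claimed inclusion to an elementary inequality for the Koranyi norm~\eqref{Koranyi}. For $Z=X_0+W\in\mathcal Z_1$, with $W$ in the first layer of $\mathfrak n$, one has $n(t)=\exp_N((e^t-1)W)$, which is itself a first-layer element and hence, in the coordinates $(q,p,\tau)$, has the form $((e^t-1)v_1,(e^t-1)v_2,0)$ for a fixed vector $(v_1,v_2)$ depending only on $W$; in particular $n(t)$ has no central component. Substituting this into $\psi_t(m)=n(t)\,D_{e^t}(m)\,n(t)^{-1}$, using $D_{e^t}(q,p,\tau)=(e^tq,e^tp,e^{2t}\tau)$ together with the group law of $\mathbb H^1$, a direct computation shows that $\psi_t$ dilates the horizontal coordinates by $e^t$, dilates the central coordinate by $e^{2t}$, and adds to the latter a term $-e^t(e^t-1)(v_2p-v_1q)$ that is linear in $(q,p)$. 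Writing out $\psi_t(B_{\mathbb H^1}(1_{\mathbb H^1},R))$ and performing the substitution $q'=e^tq$, $p'=e^tp$ in the inequality defining $B_{\mathbb H^1}(1_{\mathbb H^1},R)$, one arrives at the explicit description
\[\psi_t\bigl(B_{\mathbb H^1}(1_{\mathbb H^1},R)\bigr)=\Bigl\{(q',p',\tau')\colon\ \tfrac1{16}(q'^2+p'^2)^2+\bigl(\tau'+(e^t-1)(v_2p'-v_1q')\bigr)^2<e^{4t}R^4\Bigr\}.\]

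With this description the inclusion becomes a pointwise statement. I would take an arbitrary $(q',p',\tau')$ with $\|(q',p',\tau')\|_{\mathbb H^1}<\tilde c e^tR$, that is $\tfrac1{16}(q'^2+p'^2)^2+\tau'^2<\tilde c^4 e^{4t}R^4$, and check that it belongs to the set above. From the hypothesis one extracts $|\tau'|<\tilde c^2 e^{2t}R^2$ and $\sqrt{q'^2+p'^2}<2\tilde c e^tR$. The only term that needs care is the shear: by the Cauchy--Schwarz inequality $|(e^t-1)(v_2p'-v_1q')|\le|e^t-1|\,|v|\,\sqrt{q'^2+p'^2}$ with $|v|^2=v_1^2+v_2^2$, so combining with the bound on $|\tau'|$ one estimates the quantity $\bigl(\tau'+(e^t-1)(v_2p'-v_1q')\bigr)^2$ from above; adding $\tfrac1{16}(q'^2+p'^2)^2<\tilde c^4 e^{4t}R^4$ and using the value $\tilde c^4=1/20$ — which is exactly what makes all the constants close up — one concludes that the defining quantity of $\psi_t(B_{\mathbb H^1}(1_{\mathbb H^1},R))$ is strictly smaller than $e^{4t}R^4$. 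Hence $(q',p',\tau')\in\psi_t(B_{\mathbb H^1}(1_{\mathbb H^1},R))$, which is the assertion.

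The main obstacle is precisely this last estimate. Conjugation by $n(t)$ shears the central direction by an amount governed by the \emph{horizontal} width of the ball, whereas in the Koranyi norm the central coordinate scales quadratically, so the shear and the radius live at incompatible homogeneities and must be balanced with care; it is this balancing, together with keeping track of the interplay between the anisotropy of $\|\cdot\|_{\mathbb H^1}$ and the dilation $D_{e^t}$, that fixes the constant $\tilde c^4=1/20$. Once the explicit formula for $\psi_t$ is in hand, the remaining steps — the group-law computation and the change of variables — are routine.
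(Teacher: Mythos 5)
Your setup is sound and coincides with the paper's: for the field in \eqref{Z Heisenberg} one has $n(t)=(0,e^t-1,0)$ as in \eqref{n(t)heis}, and conjugation gives $\psi_t(q,p,\tau)=(e^tq,e^tp,e^{2t}\tau+e^t(e^t-1)q)$ (your shear term pairs $v_1,v_2$ with the wrong coordinates, but that is immaterial), so indeed $\psi_t(B_{\mathbb{H}^1}(1_{\mathbb{H}^1},R))$ is the ``sheared ball'' $\{(q',p',\tau'):\tfrac1{16}(q'^2+p'^2)^2+(\tau'-(e^t-1)q')^2<e^{4t}R^4\}$. The genuine gap is the final estimate, which you assert (``one concludes'') but never carry out, and which in fact fails: from $\tfrac1{16}(q'^2+p'^2)^2+\tau'^2<\tilde c^4e^{4t}R^4$ you only get $|\tau'|<\tilde c^2e^{2t}R^2$ and $\sqrt{q'^2+p'^2}<2\tilde ce^tR$, so the shear can move the central coordinate by as much as $2\tilde c\,|e^t-1|\,e^tR$, while the slack available in the central coordinate is only of order $e^{2t}R^2$; these scale as $R$ versus $R^2$, so no choice of the absolute constant $\tilde c$ can close the inequality for all $R>0$ and $t\in\mathbb R$. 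Concretely, take $p'=\tau'=0$ and $q'=\tilde ce^tR$: this point lies in $B_{\mathbb{H}^1}(1_{\mathbb{H}^1},\tilde ce^tR)$, but $\psi_t^{-1}(q',0,0)=(e^{-t}q',0,-e^{-2t}(e^t-1)q')$ has Koranyi norm to the fourth power equal to $\tfrac{\tilde c^4}{16}R^4+\tilde c^2(1-e^{-t})^2R^2$ by \eqref{Koranyi}, which exceeds $R^4$ as soon as $|1-e^{-t}|$ is large compared with $R$ (for instance $R$ fixed and $t\to-\infty$, which is precisely the regime relevant to Theorem~\ref{thm: counterexample}). So the step you yourself flag as ``the main obstacle'' is not merely delicate: as you have set it up it is false, and $\tilde c^4=1/20$ is not what makes it close.

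For comparison, the paper argues differently: it sandwiches the Koranyi ball between coordinate boxes, $Q(2\tilde cL)\subset B_{\mathbb{H}^1}(1_{\mathbb{H}^1},L)\subset Q(2L)$ with $Q(L)=[-L,L]^2\times[-L^2,L^2]$ --- this box/ball comparison, not any balancing of the shear, is where $\tilde c^4=1/20$ comes from --- and then applies the explicit formula for $\psi_t$ to a box. Be aware, though, that the shear is the crux there as well: $\psi_t(Q(L))$ is the sheared box $\{|q'|,|p'|\le e^tL,\ |\tau'-(e^t-1)q'|\le e^{2t}L^2\}$, and the displayed step $\psi_t(Q(L))\supset Q(e^tL)$ in the paper disregards the shear in exactly the way your final estimate would have to. The obstruction you ran into is therefore intrinsic: an inclusion of the stated form can only hold under a restriction tying $t$ to $R$ (roughly $|1-e^{-t}|\lesssim R$), or after recentring the ball on the left at a point translated by the shear; it cannot be obtained for all $R>0$ and $t\in\mathbb R$ with a fixed constant, so your argument cannot be completed as written.
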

	\begin{proof}
		For every $L,M>0$ we put 
		\[Q(L,M):=[-L,L]^2\times[-M,M]\subset {\mathbb{H}^1},\]
		and $Q(L):=Q(L,L^2)$.
		It is easy to see that
		\begin{equation}\label{cubi in palle}
			Q(2\tilde c L)\subset B_{\mathbb{H}^1}(1_{\mathbb{H}^1},L)\subset Q(2L).
		\end{equation}
		Furthermore, for every $t\in\R$%\via{and $L>0$}
		, by~\eqref{n(t)heis}, we have that if $(q,p,\tau)\in {\mathbb{H}^1}$, then
		\[\psi_t(q,p,\tau)=n(t)D_{e^t}(q,p,\tau)n(t)^{-1}=(e^tq,e^tp,e^{2t}\tau+e^t(e^t-1)q),\]
		from which it follows for every $L>0$
		\begin{equation}\label{psi cubo}
			\psi_t(Q(L))=Q(e^tL,e^{2t}L^2+e^t|1-e^t|L)\supset Q(e^tL).
		\end{equation}
		Therefore, by~\eqref{cubi in palle} and~\eqref{psi cubo},
		\begin{equation*}    \psi_t\bigl(B_{\mathbb{H}^1}(1_{\mathbb{H}^1},R)\bigr)\supset \psi_t\bigl(Q(2\tilde c R)\bigr)\supset Q(\tilde 2c e^tR)\supset B_{\mathbb{H}^1}(1_{\mathbb{H}^1},\tilde c e^t R),
		\end{equation*}
		as desired.
	\end{proof}
	%We are now ready to present the counterexample. 
	We are now ready to show that for a suitable vector field $Z\in\mathcal{Z}_1$ the family $\mathcal D^Z$ is not a CZ family for the extended Heisenberg group equipped with a right Haar measure and the \CaC distance $d_{\mathbb{H}_e^1}$. We consider the vector field 
	\begin{equation}\label{Z Heisenberg}
		Z=X_\alpha+H_{1,0}=\begin{bmatrix}
			-1&0&0&0\\1&0&0&0\\0& 0&1&-1\\0&0&0&0
		\end{bmatrix}\in\mathfrak g. 
	\end{equation}
	
	Clearly $\langle Z,H_{1,0}\rangle=1\neq 0$, so that $Z\in \mathcal Z_1$. Furthermore, $\|Z\|=\sqrt{2}$%\via{where $\langle\cdot,\cdot\rangle$ is the inner product associated to the \CaC construction}
	.
	Let $t\in\R$. By explicit computation, we have that 
	% Observe that if we put $A'=usA$ and $X'=uxX+uyY$, then
	% \begin{align*}[A',X']&=ux[A',X]+uy[A',Y] =ux(\alpha+\beta)(A')X+uy\alpha(A')Y=us X',
	% \end{align*}
	% where we used $\alpha(A')=us$ and $\beta(A')=0$. Now, by the first item of Lemma~\ref{ZCH}, we have
	% \[\exp(uZ)=\exp(A'+X')=\exp(A')\exp(\gamma X'), \]
	% where $\gamma=\gamma(us)=\frac{1-e^{-su}}{su}$. Futhermore, by the second item of Lemma~\ref{ZCH},
	% \[ \exp(\gamma X')=\exp(\gamma uxX+\gamma uyY)=\exp(\gamma uxX)\exp(\gamma uyY)\exp(-\frac{u^2\gamma^2}{2}xyT).\]
	% Finally, by direct computation, we have that 
	\begin{align*}
		\exp(tZ)
		&=\begin{bmatrix}
			e^{-t}&0&0&0\\
			1-e^{-t}&1&0&0\\
			0&0&e^{t}&1-e^{t}\\
			0&0&0&1
		\end{bmatrix}=(0,e^t-1,0,e^t)\in {\mathbb{H}_e^1},
	\end{align*}
	that means
	\begin{equation}\label{n(t)heis}
		n(t)=(0,e^t-1,0)\in {\mathbb{H}^1}.
	\end{equation}
	We recall that $\rho$ is a right Haar measure of $\mathbb H^1_e$, which is given by $\de\rho=a^{-1}\de q \de p\de \tau \de a$. 
	
	\begin{thm}\label{thm: counterexample}
		%\via{Let $G=\HH^1_e$, $Z$ a non-vertical vector field in $\mathfrak g$. Then,} \m 
		Let $Z$ be the vector field given by \eqref{Z Heisenberg}. Then $\mathcal D^Z$ is not a CZ family for $({\mathbb{H}_e^1},d_{\mathbb{H}_e^1},\rho)$.  
	\end{thm}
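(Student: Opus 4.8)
The argument runs by contradiction. Suppose $(\mathbb{H}^1_e,d_{\mathbb{H}^1_e},\rho)$ had the CZP with respect to $\mathcal{D}^Z$, with some constant $C_0$ as in Definition~\ref{def CZP}. The first step is to record what this forces for a single cylinder: by the last assertion of Theorem~\ref{CZdec}, every $P\in\mathcal{D}^Z$ occurs as one of the sets $E_j$ in the CZ decomposition of some $f\in L^1(\rho)$ at some level $\alpha>0$, and tracing the construction in the proof of Theorem~\ref{CZdec} together with item~$(iv)$ of Theorem~\ref{thm dyadic partitions} one may arrange that $\|f\|_1/\alpha\le C_1\mu(P)$. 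Since item~(e) of Definition~\ref{def CZP} has to hold for that decomposition, there must be a radius $r_P>0$ with $P\subset B_{\mathbb{H}^1_e}(x_P,C_0 r_P)$ (item~(d)) and $\rho(\{x:d_{\mathbb{H}^1_e}(x,P)<r_P\})\le C_0 C_1\mu(P)$. So it is enough to produce a sequence of cylinders in $\mathcal{D}^Z$ for which no such $r_P$ can exist — i.e.\ for which the smallest radius compatible with~(d) already blows up the neighbourhood appearing in~(e).

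The phenomenon to exploit — absent in the abelian case of Proposition~\ref{dg equiv dz} — is the interplay between the left-invariant metric $d_{\mathbb{H}^1_e}$ and the merely right-invariant measure $\rho$. Using the explicit shear $n(t)=(0,e^t-1,0)$ and the Koranyi expression~\eqref{Koranyi} for $d_{\mathbb{H}^1}$, I would first estimate the $d_{\mathbb{H}^1_e}$-size of a large admissible cylinder $P=P_{r,Q}(a)$ with $Q$ of \emph{maximal} admissible generation, so that $\delta^{k}\approx\lambda a r^{\gamma}$: the fibre of $P$ over the smallest value $a/r$ of the $A$-variable already has $d_{\mathbb{H}^1_e}$-diameter of order $\gamma\log r$, because there the factor $1/(2aa')$ in~\eqref{CC distance} amplifies the $d_N$-diameter $\approx\delta^{k}$ of $Q$; hence~(d) forces $r_P\gtrsim\log r$ with a constant depending on $\gamma$ but not absorbable into $C_0$. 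On the other hand, combining Lemma~\ref{lemma controesempio 1}, Lemma~\ref{lemma controesempio 2} and Lemma~\ref{coniugazione in Heisenberg} (which express $\psi_t$ through honest $d_{\mathbb{H}^1}$-balls) with the left-translation formula $\rho(gE)=a_g^{M}\rho(E)$, $M=4$, one sees that $\{x:d_{\mathbb{H}^1_e}(x,P)<r_P\}$ contains, just above the top fibre of $P$ (where $\rho$ is heaviest), a set of $\rho$-measure $\gtrsim (a r\,e^{r_P})^{M}$, which exceeds $\mu(P)\approx(\log r)\,\mu_N(Q)$ once $r_P$ is bigger than $\log r$. Letting $r\to\infty$ (say with $a=1$), the ratio $\rho(\{x:d_{\mathbb{H}^1_e}(x,P)<r_P\})/\mu(P)$ tends to $+\infty$, contradicting the bound $\le C_0 C_1\mu(P)$ obtained in the previous paragraph, whatever $C_0$ was.

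The main obstacle is the second estimate: one must produce a lower bound for the $\rho$-measure of the $d_{\mathbb{H}^1_e}$-neighbourhood of $P$ that genuinely beats $\mu(P)$, which requires understanding precisely how $d_{\mathbb{H}^1_e}$-balls fan out in the $A$-direction. A priori the conjugations $\psi_t$ acting on the base set could compensate for this spreading, and Lemma~\ref{coniugazione in Heisenberg} is exactly the tool that rules this out — it says that each $\psi_t$ distorts $d_{\mathbb{H}^1}$-balls only by bounded multiplicative factors at a fixed height, so it cannot counteract the exponential-in-$r_P$ growth of $d_{\mathbb{H}^1_e}$-balls coming from the exponential volume growth of $\mathbb{H}^1_e$. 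This is where the non-commutativity of $\mathbb{H}^1$ enters the picture, in contrast with Theorem~\ref{hs generalized}. Once this quantitative statement is in place, the contradiction with items~(d)--(e) is immediate, and Theorem~\ref{thm: counterexample} follows.
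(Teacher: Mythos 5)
Your reduction (use the converse part of Theorem~\ref{CZdec} so that each $P\in\mathcal D^Z$ must admit a radius $r_P$ with $P\subset B(x_P,C_0r_P)$ and $\rho(\{x:d_{\mathbb H^1_e}(x,P)<r_P\})\lesssim \mu(P)$) is essentially the paper's reduction, but the quantitative core of your counterexample does not work, and the sequence you propose is in fact \emph{not} a counterexample. Take, as you suggest, $P=P_{r,Q}(1)$ with $\delta^k\approx\lambda r^\gamma$ and $r\to\infty$. Then $\mu(P)\approx \lambda^4 r^{4\gamma}\log r$, and your bottom-fibre computation gives $\diam P\approx(2\gamma+2)\log r$; but condition (d) only forces $r_P\gtrsim \gamma\log r/C_0$ — the constant \emph{is} divided by $C_0$, contrary to your claim that it is ``not absorbable''. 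On the other hand your exponential lower bound $\rho(\{d(x,P)<r_P\})\gtrsim (r e^{r_P})^4$ exceeds $\mu(P)$ only when $r_P\gtrsim(\gamma-1)\log r$, not ``once $r_P$ is bigger than $\log r$''; so no contradiction arises unless $C_0\lesssim(\gamma+1)/(\gamma-1)$, whereas the CZP allows an arbitrary fixed $C_0$. The failure is not merely in the lower bound: estimating the $\varepsilon\log r$-neighbourhood from above (heights in $(r^{-1-\varepsilon},r^{1+\varepsilon})$, horizontal slices of radius $\lesssim \lambda r^\gamma+\sqrt{b}\,r^{(1+\varepsilon)/2}$, noting that here the shear $n(t)$ has size at most $r\ll r^\gamma$ and is therefore negligible against the base) gives measure $\lesssim_\varepsilon \mu(P)+r^{4+4\varepsilon}\lesssim\mu(P)$ since $\gamma\geq5$. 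So for small $K$ the ratio in question stays bounded: these fat cylinders behave exactly like the $X_0$-cylinders for which the CZP with $d_G$ \emph{does} hold (Theorem~\ref{mov generalized}). A further, secondary, gap: you never verify that your cylinders belong to the dyadic family $\mathcal D^Z$ rather than merely being admissible.

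The paper works in the opposite regime, and this is the idea your sketch misses. It selects, inside $\mathcal D^Z$ (by following the genealogy of Theorem~\ref{thm dyadic partitions}), cylinders $P^\ell=P_{r(P^\ell),Q_\ell}(a_\ell)$ with \emph{bounded} $r(P^\ell)\in[r_0,r_0^2]$, heights $a_\ell\approx r_0^{-4\cdot6^\ell}\to0$ and base cubes of size $\delta^{k(\ell)}\approx a_\ell$. For such sets the shear is decisive: the points of $P^\ell$ are $(n\,n(t),e^t)$ with $n(t)\approx(0,-1,0)$, and conjugation by this unit-size shear turns a horizontal displacement of size $\approx a_\ell$ inside $Q_\ell$ into a second-layer displacement of Koranyi norm $\approx\sqrt{a_\ell}$; via~\eqref{CC distance} this yields $\cosh(\diam P^\ell)\gtrsim 1/a_\ell$, i.e.\ $\diam P^\ell\gtrsim 6^\ell\to\infty$, even though $\rho(P^\ell)\approx a_\ell^4$ and $r(P^\ell)$ stay bounded. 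Meanwhile Lemmas~\ref{lemma controesempio 1}, \ref{lemma controesempio 2} and \ref{coniugazione in Heisenberg} give the \emph{uniform} polynomial lower bound $\rho(\{d(x,P^\ell)<R\})\gtrsim a_\ell^4R^5$, so the ratio at scale $R=K\diam P^\ell$ is $\gtrsim K^5 6^{5\ell}\to\infty$ for \emph{every} $K>0$, which is exactly what is needed to defeat an arbitrary $C_0$. In short, the counterexample rests on diameter inflation at fixed cylinder size caused by the shear acting on tiny base cubes at exponentially small heights, not on exponential volume growth beating $\mu(P)$ for large cylinders at height one; the latter mechanism is blocked by the admissibility constraint $\delta^k\leq\lambda a r^\gamma$ with $\gamma\geq5$.
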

	
	\begin{proof}
		It is enough to exhibit a sequence of admissible cylinders $\{P^\ell\}$ such that
		\begin{equation}\label{ratio measures counterexample}
			\sup_{\ell}\frac{\rho\left(\{x\colon d_{\mathbb{H}_e^1}(x,P^\ell)<K\diam P^\ell\}\right)}{\rho(P^\ell)}=+\infty, \quad \text{for every } K>0. 
		\end{equation}
		Indeed, if~$(d)$ in Definition~\ref{def CZP} holds, namely, if there exists $C>0$ such that for every $P^\ell\in\mathcal D^Z$ there is $R_\ell>0$ such that $\diam P^\ell\leq CR_\ell$, then by \eqref{ratio measures counterexample} one would get
		\begin{equation*}
			\sup_\ell\frac{\rho\left(\{x\colon d_{\mathbb{H}_e^1}(x,P^\ell)<R_\ell\}\right)}{\rho(P^\ell)}\geq \sup_\ell\frac{\rho\left(\{x\colon d_{\mathbb{H}_e^1}(x,P^\ell)<\frac 1C\diam P^\ell\}\right)}{\rho(P^\ell)}=+\infty,
		\end{equation*}
		which is a contradiction of property~$(e)$ in Definition~\ref{def CZP}. Hence, $(d)$ and $(e)$ cannot hold together and $\mathcal D^Z$ is not a CZ family for $({\mathbb{H}_e^1},d_{\mathbb{H}_e^1},\rho)$.
		
		We consider the family of dyadic sets $\mathcal D^Z$ build as in the proof of Theorem~\ref{thm dyadic partitions} starting from the family $\{P_k\}_{k\in\mathbb N}$ with $P_0=P_{{r_0},Q_0}(1)$ for some $r_0>e$ and some Christ dyadic cube $Q_0$ in $\mathbb H^1$. We consider the subsequence of \{$P_k$\} such that, for $\ell\geq 0$, $P_{k_\ell+1}= p^\downarrow(P_{k_\ell})$. By simplicity, we denote by $r\colon\mathcal D^Z\to (1,+\infty)$ the map defined by $r(P_{r',Q'}(a')):=r'$. An immediate computation shows that 
		\[\log_{r_0} r(P_{k_\ell+1}\setminus P_{k_\ell})=4\cdot 6^\ell,\qquad \ell\in\mathbb N.\]
		% \begin{equation*}
		%     \log_s h(P_k)=\begin{dcases*}
		%     6^{\frac\ell 2},&$k_{\ell-1}< k\leq  k_{\ell}$, $\ell$ is even,\\
		%     2\cdot 6^{\frac{\ell+1}{2}},&$k_\ell< k\leq k_{\ell+1}$, $\ell$ is odd.
		% \end{dcases*} 
		%\end{equation*}
		%where we put $k_{-1}=-1$, 
		% \begin{equation*}
		%     \log_s h(P)=\begin{dcases*}
		%     6^{\frac\ell 2},&$P\in\tilde{\mathcal S}_{k_\ell}$, $\ell$ is even, \\
		%     4\cdot 6^{\frac {\ell+1}2},&$P\in\tilde{\mathcal S}_{k_\ell}$, $\ell$ is odd.
		% \end{dcases*} 
		% \end{equation*}
		
		Now observe that for every $P'\in s(P)$, $\log_{r_0} r(P')$ is equal to $1/2\log_{r_0} r(P)$ if $s(P)$ is as~$(i)$ in Definition~\ref{def sons}, or to $\log_{r_0} r(P)$ if $s(P)$ is as~$(ii)$ in Definition~\ref{def sons}. %The dyadic sets considered in our counterexample will be contained in $\mathbb H^1\times (0,1/s)$, so it is sufficient to focus on $\tilde{\mathcal S}_{k_\ell}$ with $\ell$ odd. 
		%Let $P\in\tilde{\mathcal{S}}_{k_\ell}$. %, $\ell$ odd.
		We denote by $m(\ell)\in\mathbb N$ the smallest number of iterations of the set-valued function $s$ on $P_{k_\ell+1}\setminus P_{k_\ell}$ in which the case  $(i)$ in Definition~\ref{def sons} occurs exactly $\lfloor\ell \log_2 3\rfloor+\ell+2$ times. Then $\log_{r_0} r(P')\in [1,2]$ for every $P'\in s^{m(\ell)}(P_{k_\ell+1}\setminus P_{k_\ell})$. Indeed,
		\[\log_{r_0}r(P')=\frac{\log_{r_0} r(P)}{2^{\lfloor\ell \log_2 3\rfloor+\ell+2}}=\frac{4\cdot 6^\ell}{4\cdot 2^\ell\cdot 2^{\lfloor\ell \log_2 3\rfloor}}=\frac{3^\ell}{2^{\lfloor\ell \log_2 3\rfloor}}, \]
		and then 
		\[1=\Bigl(\frac{3}{2^{\log_2 3}}\Bigr)^\ell\leq \log_{r_0}r(P')=\frac{3^\ell}{2^{\lfloor\ell \log_2 3\rfloor}}\leq \frac{3^\ell}{2^{\ell\log_2 3-1}}=2. \]
		%Note that the sets in $\tilde{\mathcal S}_{k_\ell}$ among all the $\ell\in\mathbb N$ are disjoint and their union has full measure in $\mathbb H^1\times(0,1/s)$. 
		Furthermore, by the construction of $\mathcal D^Z$, we have that $s^{m(\ell)}(P_{k_\ell+1}\setminus P_{k_\ell})\subset \mathcal D^Z$.

		Again by the construction of the $P_k$'s, one can see that $(1_{\mathbb H^1},{r_0}^{-4\cdot 6^{\ell}})\in P_{k_\ell+1}\setminus P_{k_\ell}$. Then, fix a $P^\ell\in {s}^{m(\ell)}(P_{k_\ell+1}\setminus P_{k_\ell})\subset\mathcal D^Z$ such that $(1_{\mathbb H^1},{r_0}^{-4\cdot 6^{\ell}})\in\overline{P^\ell}$. We put $P^\ell=P_{r(P^\ell),Q_\ell}(a_\ell)$ and $k(\ell)\in \mathbb Z$ such that $Q_\ell\in\mathcal Q_{k(\ell)}$. 
		% For every $\ell\in\mathbb N$, $j\geq 2$, let $x_j=(1_{\mathbb{H}^1},s^{-j})$ and $P_j\in \mathcal D$  such that $x_j\in \overline{P_j}$ and $r_j:=h(P_j)\in [s,s^2]$. We put $P_j=P_{r_j,Q_j}(a_j)$ and $k(j)\in \mathbb Z$ such that $Q_j\in\mathcal Q_{k(j)}$. 
		Clearly, $r(P^\ell)\in [{r_0},{r_0}^2]$. It is immediate from the definition of admissible cylinder that
		\begin{equation}\label{condizaj}
			a_\ell\in \Bigl[\frac{1}{{r_0}^{4\cdot 6^\ell}r(P^\ell)},\frac{r(P^\ell)}{{r_0}^{4\cdot 6^\ell}}\Bigr]\subset [{r_0}^{-4\cdot 6^\ell-2},{r_0}^{-4\cdot 6^\ell+2}],
		\end{equation}
		\begin{equation}\label{condizdeltaj}
			\delta^{k(\ell)}\in[a_\ell r(P^\ell)^2,\lambda a_\ell r(P^\ell)^\gamma]\subset [a_\ell {r_0}^2,\lambda a_\ell {r_0}^{2\gamma}].
		\end{equation}

		%}

		%		The sequence of admissible cylinders that we will prove to satisfy \eqref{ratio measures counterexample} is $P_k:=P_{e^2,Q_k}(a_k)$, $k\in\mathbb N$, where $a_k=\delta^ke^{-4}$, and $Q_k$ is a Christ cube of generation $k$. It is immediate from the definition that the $P_k$ is large admissible cylinder. 
		Then by  Lemma \ref{coniugazione in Heisenberg}, and observing that $U_{r(P^\ell)}(a_\ell)=(\log(a_\ell/r(P^\ell)),\log(a_\ell r(P^\ell)))$, we have that for every $R>0$
		\begin{equation*}
			\begin{split}
				\Psi_{r(P^\ell),a_\ell}\Bigl(B_{\mathbb{H}^1}\Bigl(1_{\mathbb{H}^1},\frac R2\Bigr)\Bigr)&=\bigcap_{t\in U_{r(P^\ell)}(a_\ell)}\psi_t\Bigl(B_{\mathbb{H}^1}\Bigl(1_{\mathbb{H}^1},\frac{R}{2}\Bigr)\Bigr)\\
				&\supset \bigcap_{t\in U_{r(P^\ell)}(a_\ell)} B_{\mathbb{H}^1}\Bigl(1_{\mathbb{H}^1},\frac{\tilde c}2 e^tR\Bigr)=B_{\mathbb{H}^1}\Bigl(1_{\mathbb{H}^1},C'a_{\ell}R\Bigr),
			\end{split}
		\end{equation*}
		where $C'=\tilde c/2{r_0}^2$, because $r(P^\ell)\leq {r_0}^2$.
		Hence, applying Lemma \ref{lemma controesempio 2},  recalling that $\Vert Z\Vert=\sqrt{2}$ and  that $r(P^\ell)\geq {r_0}$, it follows that for any $R>0$
		\begin{equation*}
			P_{e^{R/(2\sqrt{2})}{r_0}, n_{Q_\ell} \cdot B_{\mathbb{H}^1}(1_{\mathbb{H}^1},C'a_\ell R)}(a_\ell)\subset \{x\in\mathbb H^1_e \colon d_{\mathbb{H}_e^1}(x,P^\ell)<R\}.
		\end{equation*}
		Then, by means of Proposition~\ref{prop-cyl}~$(vi)$, we get
		\begin{equation*}
			\begin{split}
				\rho(\{x\colon d_{\mathbb{H}_e^1}(x,P^\ell)<R\})\geq 2{\Bigl(\frac{R}{2\sqrt 2}+\log {r_0}\Bigr)} |B_{\mathbb{H}^1}(1_{\mathbb{H}^1},C'a_\ell R)|\approx a_\ell^4 R^5,
			\end{split}
		\end{equation*}
		if $R\gg 1$.
		On the other hand, 
		\[\rho(P^\ell)=2\log r(P^\ell)|Q_\ell|\approx |B_{\mathbb{H}^1}(n_{Q_\ell}, c\delta^{k(\ell)})|\approx \delta^{4k(\ell)}\approx a_\ell^4,\]
		by~\eqref{condizdeltaj}, from which it follows that
		\begin{equation}\label{eq: ratio measures sequence}
			\frac{\rho\left(\{x\colon d_{\mathbb{H}_e^1}(x,P^\ell)<K\diam P^\ell\}\right)}{\rho(P^\ell)}\gtrsim (K\diam P^\ell)^5.
		\end{equation}
		% \begin{align*}
		%        \frac{\mu(P_j^R)}{\mu(P_j)}&\geq \frac{\mu(P_{re^{R/2\|Z\|},n_QB_N(1_N,c\delta^j)B_N(1_N,\tilde C\delta^jR) }(a)) }{\mu(P_{r,n_Q B_N(1_N,C_1\delta^j) }(a))}\\
		%        &\geq \frac{2\log(re^{R/2\|Z\|})|n_QB_N(1_N,c\delta^j)B_N(1_N,\tilde C\delta^jR)|}{2\log(r)|n_QB_N(1_N,C_1\delta^j)|}\\
		%        &\geq \frac{|B_N(1_N,\tilde C\delta^jR)|}{|B_N(1_N,C_1\delta^j)|}\simeq\frac{(\delta^jR)^4}{(\delta^j)^4}=R^4.
		%    \end{align*}
		
		Now we estimate the diameter of $P^\ell$ from below.
		%We put $n_j$ for $n_{Q_j}\in Q$, defined in Theorem~\ref{christ}~$(iv)$. 
		Let 
		\[n_\ell:=n_{Q_\ell}\cdot(c\delta^{k(\ell)},0,0)\in n_{Q_\ell}\cdot B_{\mathbb{H}^1}(1_{\mathbb{H}^1},c\delta^{k(\ell)})\subset Q_\ell.\]
		By~\eqref{CC distance},~\eqref{n(t)heis}, and~\eqref{Koranyi}, we have    \begin{align*}
			\cosh(\diam P^\ell)&\geq \cosh\bigl(d_{\mathbb{H}_e^1}\bigl(n_\ell n(\log a_\ell),a_\ell),(n_{Q_\ell} n(\log a_\ell),a_\ell)\bigr)\bigr)\\
			&\geq\frac 1{2a_\ell^2} \|n(\log a_\ell)^{-1}n_\ell^{-1}n_{Q_\ell}n(\log a_\ell)\|_{\mathbb{H}^1}^2\\
			&=\frac 1{2a_\ell^2}\|(c\delta^{k(\ell)},0,c\delta^{k(\ell)} (1-a_\ell))\|_{\mathbb{H}^1}^2\\
			&=\frac 1{2a_\ell^2} \sqrt{\frac{c^4\delta^{4k(\ell)}}{16}+c^2\delta^{2k(\ell)}(1-a_\ell)^2}\\
			&\gtrsim \sqrt{\left(\frac{\delta^{k(\ell)}}{a_\ell}\right)^4+\left(\frac{\delta^{k(\ell)}}{a_\ell}\right)^2\left(\frac{1-a_\ell}{a_\ell}\right)^2}\\
			&\gtrsim \frac{1-a_\ell}{a_\ell}\gtrsim \frac{1}{a_\ell}\gtrsim {r_0}^{4\cdot 6^\ell},
		\end{align*}
		since $\delta^{k(\ell)}/a_\ell\geq {r_0}^2$ by~\eqref{condizdeltaj}, and $a_\ell\lesssim {r_0}^{-4\cdot 6^\ell}$ by~\eqref{condizaj}.
		This implies that $\diam P^\ell\gtrsim 6^\ell$. Then, by \eqref{eq: ratio measures sequence} we get that for any $K>0$,
		\begin{equation*}
			\frac{\rho\left(\{x\colon d_{\mathbb{H}_e^1}(x,P^\ell)<K\diam P^\ell\}\right)}{\rho(P^\ell)}\gtrsim K^5 6^{5\ell}\longrightarrow\infty, \quad \ell\to\infty.
		\end{equation*}
	\end{proof}
	
	\bibliographystyle{abbrv}
	{\small
		
	}
\end{document}